\numberwithin{equation}{section}   %%numera le equazioni sezione per sezione
\newtheorem{theorem}{Theorem}[section]
\newtheorem{lemma}[theorem]{Lemma}
\newtheorem{proposition}[theorem]{Proposition}
\theoremstyle{definition}
\newtheorem{remark}[theorem]{Remark}
\definecolor{pinegreen}{rgb}{0.0, 0.47, 0.44}
\newcommand{\R}{\mathbb{R}}
\newcommand{\N}{\mathbb{N}}
\newcommand{\ellipses}{E}
\def\misgausskm{\gamma_{-\infty}}
\DeclareMathOperator{\tr}{tr}             % trace
\newdimen\deltay
\def\Ddot#1#2(#3,#4,#5,#6){\deltay=#6\setbox1=\hbox to0pt{\smash{\dotcnt=1
\kern#3\loop\raise\dotcnt\deltay\hbox to0pt{\hss#2}\kern#5\ifnum\dotcnt<#1
\advance\dotcnt 1\repeat}\hss}\setbox2=\vtop{\box1}\ht2=#4\box2}
\def\Blue{\color{blue}}
\def\Green{\color{green}}
 \title[Maximal 
operator
  in an inverse Gaussian setting]{Boundedness properties of the  maximal 
operator\\
 in a nonsymmetric inverse Gaussian setting }
\subjclass[2000]{42B25, %funzioni massimale
 47D03. %semigruppi;
}
 \author[T.\ Bruno]{Tommaso Bruno}
\address{Dipartimento di Matematica, Universit\`a degli Studi di Genova\\Via Dodecaneso 35 \\16146 Genova\\ Italy}
\email{tommaso.bruno@unige.it}
\author[V.\ Casarino]{Valentina Casarino}
\address{DTG, Universit\`a degli Studi di Padova\\ Stradella san Nicola 3 \\I-36100 Vicenza \\ Italy}
\email{valentina.casarino@unipd.it}
\author[P.\ Ciatti]{Paolo Ciatti}
\address{Dipartimento di Matematica ``Tullio Levi Civita'', Universit\`a degli Studi di Padova\\Via Trieste, 63, 35131 Padova,  \\ Italy}
\email{paolo.ciatti@unipd.it}
\author[P.\ Sj\"ogren]{Peter Sj\"ogren}
\address{Mathematical Sciences,  University of Gothenburg and  Mathematical Sciences,
Chalmers University of Technology  \\ SE - 412 96 G\"oteborg, Sweden}
\email{peters@chalmers.se}
\thanks{The first  three authors are members of the Gruppo Nazionale per l'Analisi Matematica, la Probabilità e le loro Applicazioni (GNAMPA)
of the Istituto Nazionale di Alta Matematica (INdAM) and were partially supported by the INdAM--GNAMPA Project ``$L^{p}$ estimates for singular integrals in nondoubling settings'' (CUP E53C23001670001). %%The fourth author would like to thank the University of Padova  for its generosity during his visits.
This research was carried out while the fourth author was a visiting scientist at the University of Padova,  Italy, and he is grateful for its hospitality.}
\keywords{{{maximal operator, nondoubling measure, inverse Gaussian measure,  Ornstein--Uhlenbeck semigroup, weak type $(1,1)$.}}}
\begin{document}
\begin{abstract}
We introduce a generalized inverse Gaussian setting and consider the maximal 
operator
 associated with the natural analogue of a nonsymmetric Ornstein--Uhlenbeck semigroup. We prove that it is bounded on $L^{p}$ when $p\in (1,\infty]$ and that it is of weak type $(1,1)$,
with respect to the relevant measure.
  For small values of the  time parameter $t$, the proof hinges on the ``forbidden zones" method previously introduced in the Gaussian context. But for large times the proof requires new tools.
\end{abstract}

\maketitle

\section{Introduction}\label{Introduction}
Alongside with the Gaussian framework, the Euclidean setting endowed with the absolutely continuous measure whose density is the reciprocal of a Gaussian has acquired independent interest in the last decade. It is nowadays known as the \emph{inverse Gaussian} setting. Its introduction in the realm of harmonic analysis dates back to 
F.~Salogni~\cite{Salogni}, who introduced the operator% in $\R^n$
 \[\mathcal A =\tfrac12\,\Delta +\langle x,\nabla \rangle\]
as an (essentially) self-adjoint operator in $L^2(\R^n, d\gamma_{-1})$. Here $d\gamma_{-1}$ stands for the inverse Gaussian measure
$$
d\gamma_{-1}(x) = \pi^{n/2}\, e^{ |x|^ 2}\,dx.
$$
In her PhD thesis~\cite{Salogni}, Salogni proved the weak type $(1,1)$ of the maximal operator associated to the semigroup  generated by $\mathcal A$. A few years later, T.~Bruno and P.~Sj\"ogren~\cite{Bruno2, BS} (see also~\cite{Bruno}) studied Riesz transforms and Hardy spaces for $\mathcal{A}$. Several other contributions to harmonic analysis in this context appeared  more recently, see~\cite{AB, ABFR, BMRM, BCLC, Mazzitelli}.

 %%Taking advantage from a unitary equivalence between $\mathcal A$ and
\medskip

While this development of the inverse Gaussian setting was going on,
 V.~Casarino, P.~Ciatti and P.~Sj\"ogren started  investigating        %in a  general Gaussian setting
several classical problems           %coming from   Euclidean harmonic analysis   %turn out to be non trivial and
related to the semigroup generated by the generalized Ornstein--Uhlenbeck operator
\begin{equation}\label{generalizedOU}
\mathcal L^{Q,B}=
\tfrac12\,
\mathrm{tr}
\big( Q\nabla^2 \big)+\langle Bx, \nabla \rangle
\end{equation} 
in the Gaussian setting.
Here $\nabla$ indicates the gradient, $\nabla^2$ the Hessian, and $Q$ and  $B$ are two real  $n\times n$   matrices  called covariance and drift, respectively, satisfying
\begin{enumerate}
 \item[(H1)]
$Q$ is   symmetric and positive definite;
 \item[(H2)]
all the eigenvalues of  $B $  have negative real parts.
  \end{enumerate}
We refer the reader to
 \cite{CCS1, CCS2, CCS3, CCS4, CCS5, CCS6, CCS8} and the brief overview~\cite{CCS7}.
The semigroup generated by $\mathcal L^{Q,B}$ has the Gaussian probability measure
\begin{equation}\label{def:meas_gauss}
d\gamma_\infty (x)=
(2\pi)^{-\frac{n}{2}}
(\text{det} \, Q_\infty)^{-\frac{1}{2} }
\exp\big({-\tfrac12 \langle Q_\infty^{-1} x, x\rangle}\big)\,dx
\end{equation}
as invariant measure. 
Here
 $ Q_\infty$ is a certain symmetric and positive definite matrix whose precise definition in terms of $Q$ and  $B$ will be given in Section~\ref{s:Mehler}.
                                         %We call this the general Gaussian setting.

In some particular  cases, for instance  when   $Q$ and $-B$  coincide with the identity matrix $I_n$, 
the operator $\mathcal{L}^{Q,B}$ is essentially self-adjoint in $L^2(d\gamma_\infty)$,
 but in general it is not even symmetric. %%nonsymmetric;
Thus many classical   problems, like the boundedness of singular integral operators associated with $\mathcal{L}^{Q,B}$, give rise to  interesting and nontrivial questions.
New techniques and new ideas are required, strong enough to overcome the lack of symmetry. %%selfadjointness.

\medskip

In this article, we combine the two approaches by considering a generalized version of the  inverse Gaussian setting studied in~\cite{Salogni, Bruno2, BS} via some of the techniques developed in~\cite{CCS1, CCS2, CCS3, CCS4, CCS5, CCS6, CCS8} and others which are new. This may be seen as the starting-point of a program whose goal is to develop an analogous theory in a nonsymmetric inverse Gaussian setting.

\smallskip

We shall replace the density of  $d\gamma_\infty$  by its reciprocal, i.e.,  $\R^n$ will be equipped with
 the inverse Gaussian measure
\[
d{\gamma_{-\infty}}(x)=
(2\pi)^{\frac{n}{2}}
(\text{det} \, Q_\infty)^{\frac{1}{2} }
\exp\big({\tfrac12 \langle Q_\infty^{-1} x, x\rangle}\big)\,dx  .
\]
Like $d\gamma_\infty$, this measure is obviously locally doubling but not globally doubling, but in contrast  to the probability measure $d\gamma_\infty$, it has superexponential growth at infinity.

In this setting, the  role which was played  by the Ornstein--Uhlenbeck operator in $(\R^n, d\gamma_\infty)$ is now played by the so-called inverse Ornstein--Uhlenbeck operator
\begin{equation}\label{def:opA}
\mathcal A^{Q,B}=
\tfrac12\,
\mathrm{tr}
\big( Q\nabla^2 \big)-\langle Bx, \nabla \rangle.
\end{equation}
 Here  $B$ and $Q$ are exactly the covariance and drift matrices
inherited  from the Ornstein--Uhlenbeck setting and satisfying (H1) and (H2). Notice that $d\gamma_{-1}$ and $\mathcal A$\,, considered  in~\cite{AB, ABFR, BMRM, BCLC, Bruno2, BS, Mazzitelli, Salogni},  are a special case
of $d\gamma_{-\infty}$ and $\mathcal A^{Q,B}$, corresponding to the choice $Q=-B=I_n$.
%The operator  $\mathcal A^{Q,B}$  is in general not symmetric on $L^2(\R^n, d\gamma_{-\infty})$, but it is so in the special case mentioned.

%%  $Q$ and $B $  are called covariance and drift, respectively.
The semigroup generated by $\mathcal A^{Q,B}$   is      
$$
\mathcal H_t^{UO} := e^{t \mathcal A^{Q,B}}\,, \qquad t>0,
$$
(here ``$UO$'' has the scope of emphasizing the contrast with respect to ``$OU$'' which will be used for Ornstein--Uhlenbeck). While in the Gaussian context $d\gamma_\infty$ is invariant under the action of the Ornstein--Uhlenbeck semigroup, and  therefore chosen as the substitute for Lebesgue measure, $d\gamma_{-\infty}$ is not  invariant  under the action of $( \mathcal H_t^{UO})$. In fact, there are no invariant measures for $( \mathcal H_t^{UO})$; see, e.g.,~\cite{Metafune} or~\cite{Lunardi}. Nonetheless, $d\gamma_{-\infty}$ appears to be the natural measure for $( \mathcal H_t^{UO})$, because whenever $QB^{*}= BQ$ the operators $\mathcal H_t^{UO}$ are symmetric in $L^{2}(\R^n, d\gamma_{-\infty})$ (see Remark~\ref{remark:symmetry}). As we shall see in~Proposition~\ref{p:nucleoKt}, each operator of this semigroup is an integral operator, with a kernel  $K_t^{UO}$ with respect to $d\gamma_{-\infty}$.
  %%Deriva da Schnaubelt Lemma 2.1+ equivalenza unitaria di Salogni

\medskip

In this paper  we  study the boundedness of the maximal operator associated to the  semigroup $( \mathcal H_t^{UO})_{t>0}$, proving that it is of weak type $(1,1)$ and
of strong type $(p,p)$ for all $1<p\le \infty$, with respect to $d\gamma_{-\infty}$.
This extends a similar result in~\cite{Salogni}, proved under the assumption $Q=-B=I_n$.  Let us note that our results  appear to be the first of their kind for nonsymmetric operators  on manifolds with superexponential volume growth.

\smallskip

As is standard by now, the proof
 distinguishes between the  local and global parts of the  kernel $ K_t^{UO}$ %%inverse Ornstein-Uhlenbeck
of $\mathcal H_t^{UO}$. Here
 local and global mean   that  $|x-u| \leq 1/(1+|x|)$ and $|x-u| > 1/(1+|x|)$, respectively, $x,u$ being  the two 
arguments of the kernel.
Beyond this distinction
that dates back to~\cite{Mu, Peter, GC3},
the techniques in~\cite{Salogni}   rely on the spectral resolution of the self-adjoint operator
 $\mathcal A = \mathcal A^{I_n, -I_n}$ and seem  no longer   applicable in a nonsymmetric context.
Fortunately,    large parts of the machinery developed in~\cite{CCS1, CCS2, CCS3, CCS4, CCS5, CCS6, CCS8}  to study Gaussian harmonic analysis in a nonsymmetric setting
can be transferred to the inverse setting, and are useful to treat the local part of the maximal function and its global part for $t\in(0,1]$.
The global part for $t\ge 1$ is  more delicate and requires new tools.

\subsection*{Structure of the paper} In Section~\ref{s:Mehler}
we  recall some basic facts concerning the Ornstein--Uhlenbeck setting. We also compute
an explicit expression for the inverse Mehler kernel $ K_t^{UO}$ and  discuss its relationship with the Gaussian Mehler kernel.
In Section~\ref{simplifications}
the maximal operator associated to
$( \mathcal H_t^{UO})_{t>0}$ is introduced, and  the main theorem concerning its weak type $(1,1)$ and  strong type $(p,p)$ is stated.
We also give a theorem saying that for the global part and  $t\ge 1$ the weak type $(1,1)$ estimate can be enhanced
by  a logarithmic factor.
       % A slightly sharper estimate that holds for the global part and  $t\ge 1$ is also given.
Section~\ref{preparations} contains
 some simplifications and reductions that prepare for  the proof of the  theorems, and Section~\ref{s:local}
is focused on
the local part of the maximal function. Then in Section~\ref{s:geometry}
 some  relevant geometric aspects of the problem are considered; in particular, we define
a system of polar-like coordinates used already in~\cite{CCS2}.
      % how Lebesgue measure may be expressed in terms of these coordinates.
   Section~\ref{s:The global case}
concerns
 the global part of the maximal operator
for $0 < t \le 1$. In this case, the weak type $(1,1)$  is proved as a nontrivial  application of the ``forbidden zones",
  a recursive method introduced  first  by the fourth author in~\cite{Peter}.
The arguments for
 the global part with  $t > 1$ are given  in Section~\ref{s:mixed. t large}. Finally, in    Section~\ref{completion} we complete  the proofs
by putting together the various pieces. An argument showing that the enhanced result mentioned above is sharp ends the paper.

       %It is worth noticing that for large values of the time parameter we obtain a refinement of the standard weak type inequality. In fact, when $t>1$ the standard estimate is enhanced by a logarithmic decay factor (see~\eqref{ineq:enhanced}), which is proved to be sharp in Proposition~\ref{p:sharp}.

\subsection*{Notation}
              %With a slight abuse of notation, we shall denote by $\gamma_\infty(x)$ the density of the  measure  $\gamma_{\infty}$ with respect to Lebesgue measure; see~\eqref{def:meas_gauss}.
                    %An analogous  convention will be adopted  for  $\gamma_{-\infty}$.
 We shall denote by $C<\infty$ and $c>0$ constants that may vary from place to place. They  depend only on $n$,  $Q$ and  $B$, unless otherwise explicitly stated.
For two non-negative quantities $A$ and $B$, we write $A\lesssim B$, or equivalently  $B\gtrsim A$, if $A\leq C B$ for some $C$, and $A\simeq B$ means that $A\lesssim B$ and $B\lesssim A$.
By $\mathbb N$ we mean $\{0,1,\dots\}$. The symbol $T^*$ will denote the adjoint of  the operator $T$.

\section{The inverse Gaussian framework}\label{s:Mehler}

In this section we provide  explicit expressions for
the integral kernel of $\mathcal{H}_{t}^{UO}$ with respect  both to  Lebesgue measure and to $d\gamma_{-\infty}$ (see~\eqref{MtLeb} and Proposition~\ref{p:nucleoKt}, respectively).

In order to prove these formulae, we  need
some facts from the general Ornstein--Uhlenbeck setting.
Throughout the paper,   $B$ and $Q$  will be two real matrices
satisfying the hypotheses (H1) and (H2) introduced  in Section~\ref{Introduction}.

\subsection{Preliminaries}
We first recall the definition of  the covariance  matrices
\begin{equation}\label{defQt}
Q_t=\int_0^t e^{sB}Qe^{sB^*}ds, \qquad \text{ $t\in (0,+\infty]$}.
\end{equation}
Each $Q_t$ is well defined, symmetric  and positive definite.
Then we introduce the quadratic form
\begin{equation*}%%%\label{quadratic-form}
R(x) ={\frac12 \left\langle Q_\infty^{-1}x ,x  \right\rangle}, \qquad\text{$x\in\R^n$.}
\end{equation*}
Sometimes, we shall  use the norm
\[ |x|_Q := | Q_\infty^{-1/2}\,x|,  \qquad x \in \R^n,\]
 which satisfies  $R(x)=\frac12|x|_Q^2$ and
 $|x|_Q \simeq |x|$.

 We also set
\begin{equation}\label{def:Dtx}
D_t = Q_\infty \,
 e^{-tB^*}\, Q_\infty^{-1},  \quad \; t \in \R,
\end{equation}
which is a one-parameter group of matrices.
    % Lemma 1.1 in~\cite{CCS2} says that for  $t>0$ \begin{align}\label{Dt} D_t  =(Q_t^{-1}-Q_\infty^{-1})^{-1} Q_t^{-1} e^{tB}.
         % \\ D_t &= e^{tB} + Q_t e^{-tB^*}Q_\infty^{-1}. \notag \end{align}
In~\cite[Lemma 3.1]{CCS2} it has been  proved that
\begin{equation}\label{est:2-eBs-v}
  e^{ct}\,|x| \,\lesssim \,|D_t\, x| \, \lesssim  \, e^{Ct}\, |x|
  \qquad
\text{ and }
\qquad
  e^{-Ct}\,|x|\, \lesssim\, |D_{-t}\, x| \, \lesssim \,  e^{-ct}\, |x|,
\end{equation}
 for $t>0$ and all $x\in \R^n$.

When $x \ne 0$ and $0< t \le 1$,~\cite[Lemma 2.3]{CCS3} says that
\begin{equation}\label{x-Dtx}
 |x-D_t\, x|\simeq |t|\,|x|.
\end{equation}

\subsection{The inverse Mehler kernel}
The Ornstein--Uhlenbeck operator $\mathcal L^{Q,B}$ given by~\eqref{generalizedOU} is essentially selfadjoint in $L^2(\gamma_\infty)$; the measure $d\gamma_\infty$ is defined in~\eqref{def:meas_gauss}. We will sometimes write $\gamma_\infty(x)$ for its density.
%%where $B$ and $Q$ have been introduced in Section~\ref{Introduction} and satisfy hypothese (H1) and (H1).
For each $f\in L^1(\gamma_\infty)$ and all $t>0$ one has
\begin{align}                       \label{def-int-ker}
 %%H_t
e^{t \mathcal L^{Q,B}} f(x) &=
 \int
K_t^{OU}
(x,u)\,
f(u)\,
 d\gamma_\infty(u)
  \,, \qquad x\in\R^n,
\end{align}
 where for $x,u\in\R^n$ and $t>0$ the Mehler kernel $K_t^{OU}$ (with respect to $d\gamma_\infty$)   is given by
\begin{align}    \label{mehler}                   %\label{defKRt}
K_t^{OU} (x,u)\!
=\!
\Big(
\frac{\det \, Q_\infty}{\det \, Q_t}
\Big)^{\frac12 }
e^{R(x)}\,
\exp \bigg[
{-\frac12
\left\langle \left(
Q_t^{-1}-Q_\infty^{-1}\right) (u-D_t \,x) \,, u-D_t\, x\right\rangle}\bigg];\,\;\;\;
\end{align}
see~\cite[(2.6)]{CCS2}.
This immediately yields
\begin{align}                       \label{def-int-ker_Leb}
 %%H_t
e^{t \mathcal L^{Q,B}}
 f(x) &=
 \int
M_t^{OU}
(x,u)\,
f(u)\,
 du
  \,, \qquad x\in\R^n,
\end{align}
where the  kernel
$M_t^{OU}$ (with respect to  Lebesgue measure)  fulfills
\begin{align}\label{defMtQB}
\mathcal  \mathcal M_t^{OU}(x,u)= &\,K_t^{OU} (x,u)\,\gamma_\infty(u)\\
= & \,(2\pi)^{-\frac n 2} (\det Q_t)^{-\frac12}\,  e^{R(x)-R(u)}
 \exp\left[ -\frac12
  \langle (Q_t^{-1} -Q_\infty^{-1})(u-D_t\, x), \, u-D_t\, x \rangle    \right].
\end{align}
%Here  $\gamma_\infty(u)$ means the density of the  measure  $d\gamma_{\infty}$ with respect to Lebesgue measure; see~\eqref{def:meas_gauss}.
        %The second equality here is due to~\eqref{mehler} and~\eqref{def:meas_gauss}.
From this we can deduce the corresponding kernel for the inverse  Ornstein--Uhlenbeck setting.

\begin{lemma}\label{l:kernel}
 The kernel
 of $e^{t \mathcal A^{Q,B}}$ with respect to  Lebesgue measure  is
\begin{equation*}                  %\label{MHLeb}
M_t^{UO}(x,u)=
 e^{t\,\mathrm{tr} B}\, M_t^{OU} (u,x) ,
\end{equation*}
for any $x,u\in\R^n$ and $t>0$.
\end{lemma}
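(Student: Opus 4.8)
The plan is to obtain $M_t^{UO}$ from the known Lebesgue kernel $M_t^{OU}$ of the Ornstein--Uhlenbeck semigroup by exploiting the fact that $\mathcal A^{Q,B}$ is, up to a formal adjoint and a lower-order constant, the ``transpose'' of $\mathcal L^{Q,B}$. More precisely, first I would compute the formal $L^2(dx)$-adjoint of $\mathcal L^{Q,B}=\tfrac12\tr(Q\nabla^2)+\langle Bx,\nabla\rangle$. Integration by parts gives, for the first-order part, $\langle Bx,\nabla\rangle^* = -\langle Bx,\nabla\rangle - \tr B$, while the second-order part $\tfrac12\tr(Q\nabla^2)$ is formally self-adjoint since $Q$ is symmetric with constant entries. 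Hence
\begin{equation*}
\big(\mathcal L^{Q,B}\big)^* = \tfrac12\tr(Q\nabla^2) - \langle Bx,\nabla\rangle - \tr B = \mathcal A^{Q,B} - \tr B,
\end{equation*}
so that $\mathcal A^{Q,B} = \big(\mathcal L^{Q,B}\big)^* + \tr B$. Exponentiating, $e^{t\mathcal A^{Q,B}} = e^{t\tr B}\,\big(e^{t\mathcal L^{Q,B}}\big)^*$, where the adjoint is taken with respect to Lebesgue measure.

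Next I would pass from this operator identity to the kernel identity. If $e^{t\mathcal L^{Q,B}}$ has Lebesgue kernel $M_t^{OU}(x,u)$, i.e.\ $e^{t\mathcal L^{Q,B}}f(x)=\int M_t^{OU}(x,u)f(u)\,du$ as in~\eqref{def-int-ker_Leb}, then its Lebesgue adjoint has kernel $M_t^{OU}(u,x)$ (the roles of the two variables are swapped). Multiplying by the scalar $e^{t\tr B}$ yields precisely
\begin{equation*}
M_t^{UO}(x,u) = e^{t\tr B}\, M_t^{OU}(u,x),
\end{equation*}
which is the claim. To make this rigorous rather than merely formal, I would check that $M_t^{OU}$, whose explicit Gaussian form is given in~\eqref{defMtQB}, indeed defines a bounded integral operator whose adjoint on a suitable dense class (say Schwartz functions, or $L^2(dx)$ after noting the kernel is jointly measurable with the needed integrability) is represented by the transposed kernel; this is routine from the explicit expression, since for fixed $t>0$ the kernel has Gaussian decay in $u-D_t x$ after the bounded factor $e^{R(x)-R(u)}$ is controlled on bounded sets and more care is taken globally.

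The main obstacle — really the only non-formal point — is justifying the identity $e^{t\mathcal A^{Q,B}}=e^{t\tr B}(e^{t\mathcal L^{Q,B}})^*$ at the level of the semigroups generated by these operators, i.e.\ verifying that the semigroup $(\mathcal H_t^{UO})$ is genuinely the one whose Lebesgue kernel is the transpose of the Mehler kernel (up to the $e^{t\tr B}$ factor), and not just formally so. The cleanest route is: let $\widetilde M_t(x,u):=e^{t\tr B}M_t^{OU}(u,x)$, verify directly from~\eqref{defMtQB} that $(x,t)\mapsto \int \widetilde M_t(x,u)f(u)\,du$ solves the Cauchy problem for $\mathcal A^{Q,B}$ with initial datum $f$ — this is a differentiation under the integral sign using that $M_t^{OU}$ solves the backward/forward Kolmogorov equations for $\mathcal L^{Q,B}$, a known fact from~\cite{CCS2} — and that it has the semigroup property $\widetilde M_{s+t}=\widetilde M_s * \widetilde M_t$ (Chapman--Kolmogorov), which transfers from the corresponding property of $M_t^{OU}$ by swapping variables. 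Uniqueness of the solution then forces $\widetilde M_t=M_t^{UO}$. All of this is standard Ornstein--Uhlenbeck bookkeeping; the substance of the lemma is the algebraic observation that transposing the Mehler kernel and inserting the factor $e^{t\tr B}$ produces exactly the inverse Mehler kernel.
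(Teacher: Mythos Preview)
Your proposal is correct and follows essentially the same approach as the paper: compute the formal Lebesgue adjoint of $\mathcal L^{Q,B}$ by integration by parts to obtain $\mathcal A^{Q,B}=(\mathcal L^{Q,B})^*+\tr B$, exponentiate, and read off the transposed kernel. The paper's proof is terser and simply invokes that $e^{t(\mathcal L^{Q,B})^*}$ has kernel $M_t^{OU}(u,x)$ without the additional Cauchy-problem justification you sketch, but the substance is identical.
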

\begin{proof}
We first compute the adjoint of $\mathcal L^{Q,B}$ in $L^2(\mathbb R^n, dx)$, where $dx$ denotes  Lebesgue measure.
 Let $f$ and $g$ be smooth functions with compact supports in $\mathbb R^n$.
 The second-order term in  $\mathcal L^{Q,B}$
is symmetric, and for the first-order term we integrate by parts, getting
\begin{multline*}
\langle f, \mathcal L^{Q,B}\,g\rangle =   \int f(x)\, \left(\frac12\,\mathrm{tr} (Q\nabla^2g)(x) +
\langle Bx, \nabla g(x)\rangle\right)\,dx \\
  =  \int  \left(\frac12\,\mathrm{tr} (Q\nabla^2f)(x) - \langle Bx, \nabla f(x)\rangle - \mathrm{tr} B   f(x)\right)
  g(x)\,dx   =  \,\langle \mathcal A^{Q,B}f - \mathrm{tr} B f, g\rangle.
\end{multline*}
Thus \[ \mathcal A^{Q,B} = \big(\mathcal{L}^{Q,B}\big)^{*} +  \mathrm{tr} B,\] 
and
\[
e^{t \mathcal A^{Q,B}} = e^{t\,\mathrm{tr} B} \,e^{t(\mathcal L^{Q,B})^{*}}, \qquad t>0.
\]
Since $e^{t(\mathcal{L}^{Q,B})^{*} }$ is the adjoint of  $e^{t\mathcal{L}^{Q,B}}$, it has kernel
 $ (M_t^{OU})^* (x,u) = M_t^{OU} (u,x)$, whence the claim.
 \end{proof}

  %By means of~\eqref{defMtQB},~\eqref{mehler} and~\eqref{def:meas_gauss}  one obtains

     %\begin{multline*}  \mathcal M_t^{Q,B}(x,u) = (2\pi)^{-n/2} (\det Q_t)^{-1/2}\,  e^{R(x)-R(u)}\\\times \exp\left[ -\frac12  \langle (Q_t^{-1} -Q_\infty^{-1})(u-D_t\, x), \, u-D_t\, x \rangle    \right]. \end{multline*}
%%%%where $(D_t)_{t \in \mathbb R}$ is the semigroup used in [CCS2].
From~\eqref{defMtQB} and Lemma~\ref{l:kernel}  we have
\begin{align} \label{MtLeb}
M_t^{UO}(x,u)&\\
  = (2\pi)^{-\frac n2}& (\det Q_t)^{-\frac12}\,  e^{t\,\mathrm{tr} B}\,  e^{R(u)-R(x)} \notag
\, \exp\left[ -\frac12
  \langle (Q_t^{-1} -Q_\infty^{-1})(x-D_t\, u), \, x-D_t \,u \rangle    \right].
\end{align}
This  kernel is for integration against  Lebesgue measure, but  the relevant measure in the inverse setting is $d\gamma_{-\infty}$.
Dividing
$M_t^{UO}$
 by the density $\gamma_{-\infty}(u)$,
one obtains the kernel of  $e^{t \mathcal A^{Q,B}}$ for integration against    $d\gamma_{-\infty}$, as follows.
    %\end{document}
\begin{proposition}\label{p:nucleoKt}
 The kernel
 of $e^{t \mathcal A^{Q,B}}$ with respect to $d\gamma_{-\infty}$   %%%cioè{\Blue{$d\gamma_{-\infty}(u)$}}  is
is
\begin{multline*}    %\label{inv_Mehler}
K_t^{UO}(x,u)   \\  =
(2\pi)^{-n}
\big(
 \det Q_\infty\, \det Q_t\big)^{-\frac12}\,  e^{t\,\mathrm{tr} B}\,  e^{-R(x)}
   \, \exp\left[ -\frac12
  \langle (Q_t^{-1} -Q_\infty^{-1})(x-D_t\, u), \,x-D_t\, u \rangle    \right]
  \end{multline*}
for all   $x,u\in\R^n$,
 and $t>0$.
\end{proposition}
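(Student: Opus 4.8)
The plan is to read off $K_t^{UO}$ from $M_t^{UO}$ by the standard change-of-measure identity. If an operator $T$ acts as $Tf(x)=\int M(x,u)\,f(u)\,du$ and $\mu$ is absolutely continuous with positive density $\rho$, then $Tf(x)=\int \big(M(x,u)/\rho(u)\big)\,f(u)\,d\mu(u)$, so $M(x,u)/\rho(u)$ is the kernel of $T$ with respect to $\mu$. Here I take $T=e^{t\mathcal A^{Q,B}}$, with Lebesgue kernel $M=M_t^{UO}$ as furnished by Lemma~\ref{l:kernel} together with~\eqref{defMtQB} (equivalently by the explicit formula~\eqref{MtLeb}), and $\mu=\gamma_{-\infty}$, whose density is $\gamma_{-\infty}(u)=(2\pi)^{n/2}(\det Q_\infty)^{1/2}\,e^{R(u)}$ in view of $R(u)=\tfrac12\langle Q_\infty^{-1}u,u\rangle$. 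Note in passing that this is exactly the reciprocal of $\gamma_\infty(u)=(2\pi)^{-n/2}(\det Q_\infty)^{-1/2}e^{-R(u)}$, i.e. $\gamma_\infty(u)\,\gamma_{-\infty}(u)=1$, which is the sense in which $d\gamma_{-\infty}$ is ``inverse Gaussian''.

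Then I would simply divide: starting from
\[
M_t^{UO}(x,u)=(2\pi)^{-\frac n2}(\det Q_t)^{-\frac12}\,e^{t\,\mathrm{tr}B}\,e^{R(u)-R(x)}\,\exp\!\Big[-\tfrac12\big\langle (Q_t^{-1}-Q_\infty^{-1})(x-D_t u),\,x-D_t u\big\rangle\Big],
\]
dividing by $\gamma_{-\infty}(u)$ combines the prefactor $(2\pi)^{-n/2}$ with the $(2\pi)^{-n/2}$ coming from $1/\gamma_{-\infty}(u)$ to give $(2\pi)^{-n}$, introduces the extra factor $(\det Q_\infty)^{-1/2}$, and, most importantly, cancels the exponential factor $e^{R(u)}$ present in $M_t^{UO}$ against the $e^{R(u)}$ in $\gamma_{-\infty}(u)$, leaving only $e^{-R(x)}$. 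The quadratic-form exponential is untouched. This produces precisely the displayed expression for $K_t^{UO}(x,u)$.

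There is really no obstacle: the whole argument is a one-line algebraic cancellation once Lemma~\ref{l:kernel} is in hand. The only items worth stating explicitly, for completeness, are the identification $\gamma_{-\infty}=1/\gamma_\infty$ used above, and the remark that the integrals make sense for $f\in L^1(d\gamma_{-\infty})$ since $K_t^{UO}(x,\cdot)$ is, for each fixed $x$, bounded on $\R^n$ (the quadratic form $\langle(Q_t^{-1}-Q_\infty^{-1})\,\cdot\,,\,\cdot\,\rangle$ being positive definite for $t\in(0,\infty)$), so that $e^{t\mathcal A^{Q,B}}$ is a well-defined integral operator against $d\gamma_{-\infty}$ and the kernel is uniquely determined.
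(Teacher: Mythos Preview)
Your proposal is correct and follows exactly the paper's own argument: the paper states, just before the proposition, that one obtains $K_t^{UO}$ by dividing $M_t^{UO}$ by the density $\gamma_{-\infty}(u)$, and the displayed formula is the result of that division. Your added remarks on the identity $\gamma_{-\infty}=1/\gamma_\infty$ and on the boundedness of $K_t^{UO}(x,\cdot)$ are fine supplementary observations but not needed for the proof itself.
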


It follows that the Mehler kernel $ K_t^{OU}$
and its counterpart $ K_t^{UO}$  in the inverse Gaussian setting are related by
\begin{equation}\label{duenuclei}
K_t^{UO}(x,u) =
(2\pi)^{-n}
(\det Q_\infty)^{-1}\,    e^{-R(x)} \, e^{-R(u)}\, e^{t\,\mathrm{tr} B}\,K_t^{OU}(u,x).
\end{equation}

\begin{remark}\label{remark:symmetry}
Because of \eqref{duenuclei}, the semigroup $e^{-t\mathcal{A}^{Q,B}}$ is symmetric on $L^{2}(\gamma_{-\infty})$ if and only if $e^{-t\mathcal{L}^{Q,B}}$ is symmetric on $L^{2}(\gamma_{\infty})$. Since the latter property holds if and only if $QB^* = BQ$ by~\cite[Theorem 2.4]{CMG} (see also \cite[Lemma 2.1]{MPRS}), we see that also $e^{-t\mathcal{A}^{Q,B}}$  is symmetric on $L^{2}(\gamma_{-\infty})$ if and only if $QB^* = BQ$.
\end{remark}

Formula~\eqref{duenuclei} allows us to transfer the upper and lower estimates of  $  K_t^{OU}$ in~\cite[formulae~(3.4) and (3.5)]{CCS2}
to  $K_t^{UO}$, as follows.
\begin{proposition}\label{esr_kernelQB}
 If $0 < t\le 1$, one has
\begin{equation}\label{litet}
 \frac{ e^{-R( x)}}{t^{n/2}}  \exp\bigg(-C\,\frac{|D_{-t}\,x- u |^2}t\bigg)
 \lesssim   K_t^{UO}(x,u)
 \lesssim  \frac{ e^{-R( x)}}{t^{n/2}} \exp\bigg(-c\,\frac{|D_{-t}\,x- u |^2}t\bigg)
\end{equation}
for all $(x, u) \in\mathbb R^n \times \mathbb R^n$. If instead $t\ge 1$, then
\begin{equation}\label{stort}
e^{-| {\tr} B|t}\,  e^{-R(x)}
\exp\left( -C
 | D_{-t}\,x- u|^2    \right)\lesssim
K_t^{UO}(x,u) \lesssim
 e^{-| {\tr} B| \,t}\,  e^{-R(x)}
\exp\left( -c
 | D_{-t}\,x- u|^2    \right)
\end{equation}
for all $(x,u)\in\mathbb R^n \times \mathbb R^n$.
\end{proposition}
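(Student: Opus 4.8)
The plan is to derive Proposition~\ref{esr_kernelQB} as a direct consequence of the identity~\eqref{duenuclei} together with the known Gaussian estimates from~\cite[formulae~(3.4) and (3.5)]{CCS2}. Recall that~\eqref{duenuclei} reads
\[
K_t^{UO}(x,u) = (2\pi)^{-n}(\det Q_\infty)^{-1}\, e^{-R(x)}\, e^{-R(u)}\, e^{t\,\tr B}\, K_t^{OU}(u,x),
\]
so the whole point is to insert the two-sided bounds for $K_t^{OU}(u,x)$ and simplify. The Gaussian estimates say (after swapping the two arguments, which is harmless because the roles of $x$ and $u$ in $K_t^{OU}$ are just relabelled) that for $0<t\le 1$ one has, up to multiplicative constants,
\[
K_t^{OU}(u,x) \simeq t^{-n/2}\, e^{R(u)}\, \exp\Big(-\text{const}\,\frac{|x - D_t u|^2}{t}\Big),
\]
and for $t\ge 1$,
\[
K_t^{OU}(u,x) \simeq e^{R(u)}\, \exp\big(-\text{const}\,|x - D_t u|^2\big),
\]
with appropriate large/small constants on each side. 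The factor $e^{R(u)}$ appearing here is precisely what cancels the $e^{-R(u)}$ in~\eqref{duenuclei}, which is the reason the final estimates involve only $e^{-R(x)}$ and no factor depending on $u$ outside the exponential.

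With that cancellation done, the remaining task is cosmetic: for the case $t\ge 1$ the factor $e^{t\,\tr B}$ must be absorbed into $e^{-|\tr B|t}$, which works on both sides since $-|\tr B|t \le t\,\tr B \le |\tr B|t$, possibly at the cost of adjusting $c$ and $C$ — but note that $e^{t\tr B}$ contributes nothing exponential in $x$ or $u$, so the structure is preserved. For $0<t\le1$ the factor $e^{t\tr B}$ is bounded above and below by absolute constants and can simply be swallowed into the $\lesssim$/$\gtrsim$. The one genuinely substantive point is to rewrite the Gaussian quantity $|x - D_t u|^2$ in terms of $|D_{-t}x - u|^2$: using that $D_{-t}$ is the inverse of $D_t$ (they form a one-parameter group by~\eqref{def:Dtx}), we have $x - D_t u = D_t(D_{-t}x - u)$, and then the bounds~\eqref{est:2-eBs-v} give $|D_t v| \simeq e^{\pm\,\text{const}\,t}|v|$. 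For $0<t\le1$ this comparison constant $e^{\pm ct}$ is bounded above and below by absolute constants, so $|x - D_t u|^2 \simeq |D_{-t}x - u|^2$ with no loss, and one simply redefines $c,C$. For $t\ge1$ one has $|x - D_t u|^2 \ge c\,e^{2ct}|D_{-t}x-u|^2 \ge c|D_{-t}x-u|^2$ and likewise $\le C e^{2Ct}|D_{-t}x-u|^2$; the latter is problematic because of the $e^{2Ct}$ blow-up, but this direction is only needed for the \emph{lower} bound on $K_t^{UO}$, where a larger constant $C$ in the exponent is exactly what is allowed. So in every case the transformation goes through after renaming constants.

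I expect the main — and really only — obstacle to be bookkeeping: making sure the constants $c$ and $C$ are matched to the correct side of each inequality, and in particular checking in the $t\ge1$ regime that the exponential factors $e^{\pm Ct}$ coming from~\eqref{est:2-eBs-v} land on the lower bound (where they help, since they weaken the Gaussian decay) rather than on the upper bound (where an $e^{2Ct}$ factor multiplying $|D_{-t}x-u|^2$ inside the exponent would be fine too, actually, since it only makes the bound larger — wait, no: on the upper bound we need $\exp(-c|x-D_tu|^2)$, and $|x-D_tu|^2 \ge c e^{2ct}|D_{-t}x-u|^2 \ge c|D_{-t}x-u|^2$ gives $\exp(-c|x-D_tu|^2)\le \exp(-c|D_{-t}x-u|^2)$, which is exactly what we want). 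So in fact both directions are comfortable; the proof is essentially substitution plus~\eqref{est:2-eBs-v}, and I would write it as a short paragraph. No new ideas are required beyond the change of variables $x-D_tu = D_t(D_{-t}x-u)$ and the group property of $D_t$.
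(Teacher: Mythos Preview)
Your approach is the paper's approach: the paper gives no proof beyond the sentence ``Formula~\eqref{duenuclei} allows us to transfer the upper and lower estimates of $K_t^{OU}$ in~\cite[formulae~(3.4) and (3.5)]{CCS2} to $K_t^{UO}$'', so invoking \eqref{duenuclei} together with the cited bounds is exactly right. Two remarks on the execution.

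First, a triviality: by hypothesis (H2) all eigenvalues of $B$ have negative real part, and since $B$ is real its trace is real, so $\tr B<0$ and $e^{t\,\tr B}=e^{-|\tr B|\,t}$ \emph{exactly}. No absorption or inequality is needed.

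Second, and more seriously, your conversion $|x-D_t u|\leftrightarrow |D_{-t}x-u|$ for $t\ge 1$ does not go through as written in the lower-bound direction. You correctly observe that $x-D_tu=D_t(D_{-t}x-u)$ and that $|D_t w|$ may be as large as $Ce^{Ct}|w|$; this forces, in the lower bound, the inequality
\[
\exp\big(-C'|x-D_tu|^2\big)\ \ge\ \exp\big(-C'\,C^2e^{2Ct}\,|D_{-t}x-u|^2\big),
\]
and the ``constant'' $C'C^2e^{2Ct}$ is \emph{not} a constant --- it depends on $t$. Saying ``a larger constant $C$ in the exponent is exactly what is allowed'' is wrong here, because the proposition demands a $t$-independent $C$. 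The crude norm estimate \eqref{est:2-eBs-v} on $D_t$ is simply not sharp enough for this step. What actually happens is that the \emph{specific} quadratic form in the Mehler exponent satisfies
\[
\big\langle (Q_t^{-1}-Q_\infty^{-1})(x-D_tu),\,x-D_tu\big\rangle
=\big\langle D_t^*(Q_t^{-1}-Q_\infty^{-1})D_t\,(D_{-t}x-u),\,D_{-t}x-u\big\rangle,
\]
and the matrix $D_t^*(Q_t^{-1}-Q_\infty^{-1})D_t$ is uniformly bounded above and below for $t\ge 1$ (use $Q_\infty-Q_t=e^{tB}Q_\infty e^{tB^*}$ and the definition of $D_t$; in the scalar case this reduces to $e^{2t}\cdot 2e^{-2t}/(1-e^{-2t})=2/(1-e^{-2t})$). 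This is the content already built into \cite[(3.5)]{CCS2}, so if you quote those estimates in the form they are actually stated there --- with the Gaussian variable $D_{-t}(\cdot)-(\cdot)$ rather than $(\cdot)-D_t(\cdot)$ --- the swap in \eqref{duenuclei} gives $|D_{-t}x-u|$ directly and no conversion is needed at all.
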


 \subsection{A Kolmogorov-type formula}
We conclude this section by deducing a Kolmogorov-type formula in the inverse Gaussian setting. Recall that on the space $\mathcal C_b(\R^n)$
of bounded continuous functions,
 the
Ornstein--Uhlenbeck semigroup
$(\mathcal H_t^{OU})_{t> 0}$ is explicitly given
 by the Kolmogorov formula
 \begin{equation}\label{Kolmo}
 \mathcal H_t^{OU}
f(x)=(2\pi)^{-\frac{n}{2}}
({\det} \, Q_t)^{-\frac{1}{2} }
\int
f(e^{tB}x-u)\, e^{-\frac12 \langle Q_t^{-1}u,u\rangle}\,d\gamma_t (u)\,, \quad x\in\R^n\,,
\end{equation}
see e.g.~\cite{Kolmo}.  Here  $d\gamma_t$  denotes a normalized Gaussian measure in $\R^n$ given by
\[d\gamma_t (x)= (2\pi)^{-\frac{n}{2}} (\text{det} \, Q_t)^{-\frac{1}{2} } e^{-\frac12 \langle Q_t^{-1}x,x\rangle}dx  \,,\qquad \text{ $t\in (0,+\infty]$. }\]
\begin{proposition}\label{p:Kolmo}
 For all $f\in \mathcal C_b(\R^n)$
 one has
\begin{equation*}
\mathcal H_t^{UO}
f(x)=(2\pi)^{-\frac{n}{2}}
({\det} \, Q_t)^{-\frac{1}{2} }
\int
f(e^{-tB}(u+x))  \, e^{-\frac12 \langle Q_t^{-1}u,u\rangle}\, du\,, \qquad x\in\R^n\,.
\end{equation*}
\end{proposition}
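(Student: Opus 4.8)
The plan is to reduce the statement to the Kolmogorov formula~\eqref{Kolmo} for the Ornstein--Uhlenbeck semigroup through Lemma~\ref{l:kernel}, and then to perform one affine change of variables in $\R^n$. First I would use Lemma~\ref{l:kernel} to write, for $f\in\mathcal C_b(\R^n)$ and $t>0$,
\[
\mathcal H_t^{UO} f(x)=\int_{\R^n}M_t^{UO}(x,u)\,f(u)\,du=e^{t\,\tr B}\int_{\R^n}M_t^{OU}(u,x)\,f(u)\,du,\qquad x\in\R^n,
\]
the integral converging absolutely because $u\mapsto M_t^{OU}(u,x)$ is, up to the normalizing constant, the Gaussian density entering the Mehler/Kolmogorov representation of $\mathcal H_t^{OU}$ (compare~\eqref{defMtQB}). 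Next I would record $M_t^{OU}$ in Kolmogorov form: substituting $u\mapsto e^{tB}x-u$ in~\eqref{Kolmo} and comparing with~\eqref{def-int-ker_Leb} gives
\[
M_t^{OU}(x,u)=(2\pi)^{-\frac n2}(\det Q_t)^{-\frac12}\exp\!\Bigl(-\tfrac12\bigl\langle Q_t^{-1}(e^{tB}x-u),\,e^{tB}x-u\bigr\rangle\Bigr),\qquad x,u\in\R^n
\]
(the same identity also follows directly from~\eqref{defMtQB} via $Q_t=Q_\infty-e^{tB}Q_\infty e^{tB^*}$, which is immediate from~\eqref{defQt}, together with the definition~\eqref{def:Dtx} of $D_t$; routing through~\eqref{Kolmo} avoids the matrix algebra).

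Inserting this expression, with $x$ and $u$ interchanged, into the previous display yields
\[
\mathcal H_t^{UO} f(x)=e^{t\,\tr B}(2\pi)^{-\frac n2}(\det Q_t)^{-\frac12}\int_{\R^n}\exp\!\Bigl(-\tfrac12\bigl\langle Q_t^{-1}(e^{tB}u-x),\,e^{tB}u-x\bigr\rangle\Bigr)f(u)\,du.
\]
I would then change variables $u\mapsto v:=e^{tB}u-x$, that is $u=e^{-tB}(v+x)$. Since $\det e^{-tB}=e^{-t\,\tr B}>0$, one has $du=e^{-t\,\tr B}\,dv$, so the Jacobian factor $e^{-t\,\tr B}$ exactly cancels the prefactor $e^{t\,\tr B}$; after renaming $v$ as $u$ this is precisely the claimed formula.

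The argument is essentially bookkeeping, and I do not expect a genuine obstacle. The two points that deserve a little attention are the cancellation of the exponential factors $e^{\pm t\,\tr B}$ --- one coming from the passage from $\mathcal L^{Q,B}$ to $\mathcal A^{Q,B}$ in Lemma~\ref{l:kernel}, the other being the Jacobian of the linear substitution --- which I expect to cancel exactly, and the verification that every integral converges absolutely, so that~\eqref{Kolmo} and the change of variables are legitimate; the latter is immediate since $f$ is bounded and the Gaussian weight is integrable.
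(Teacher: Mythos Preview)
Your argument is correct and follows essentially the same route as the paper: both use Lemma~\ref{l:kernel} to pass to $M_t^{OU}$, identify the latter in Kolmogorov form from~\eqref{Kolmo}, and then perform the affine change of variables $v=e^{tB}u-x$, whose Jacobian $e^{-t\,\tr B}$ cancels the prefactor $e^{t\,\tr B}$. The only difference is cosmetic ordering of the steps.
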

\begin{proof}
From~\eqref{Kolmo}
it follows that
\begin{align*}
\mathcal H_t^{OU}
f(x)&=(2\pi)^{-\frac{n}{2}}
(\text{det} \, Q_t)^{-\frac{1}{2} }
\int
f(u)\,e^{-\frac12 \langle Q_t^{-1}(e^{tB}x-u),(e^{tB}x-u)\rangle}
du \,, \qquad x\in\R^n\,,
\end{align*}
                               %and from\eqref{def-int-ker_Leb},
  which means that
\begin{equation*}                %\label{eq:new_Mehler}
M_t^{OU}
(x,u)\,
=(2\pi)^{-\frac{n}{2}}
(\text{det} \, Q_t)^{-\frac{1}{2} }
\,e^{-\frac12 \langle Q_t^{-1}(e^{tB}x-u),(e^{tB}x-u)\rangle}
\,, \qquad \quad x,u \in\R^n\,.
\end{equation*}
This and Lemma~\ref{l:kernel}  imply that
\begin{align*}
\mathcal H_t^{UO}
f(x)
&= e^{t\,\mathrm{tr} B}\, \int f(u) \,M_t^{OU} (u,x)\, du
\\
&= e^{t\,\mathrm{tr} B}\, (2\pi)^{-\frac{n}{2}}
(\text{det} \, Q_t)^{-\frac{1}{2} }
\int f(u) \,
\,e^{-\frac12 \langle Q_t^{-1}(e^{tB}u-x),(e^{tB}u-x)\rangle}\, du
\\
&=(2\pi)^{-\frac{n}{2}} ({\det} \, Q_t)^{-\frac{1}{2} }
\int
f(e^{-tB}(u+x)) \, e^{-\frac12 \langle Q_t^{-1}u,u\rangle} du\,, \qquad x\in\R^n,
\end{align*}
where the last equality follows by a trivial change of variables. This proves the assertion.
\end{proof}
\begin{remark}\label{conservative}
Proposition~\ref{p:Kolmo} immediately implies that $(\mathcal H_t^{UO})_{t>o}$ is conservative,  that is, each $\mathcal H_t^{UO}$ maps the constant
function $\mathbf{1}$ into itself.
\end{remark}

\section{The main result}\label{simplifications}
The  maximal operator associated to the  semigroup $\big( \mathcal H_t^{UO}\big)_{t>0}$ is defined as
 \begin{align}\label{OU-max}
 \mathcal H_*
f(x)=
\sup_{t> 0}
\big|
 \mathcal H_t^{UO}
f(x)
\big|.
\end{align}
Notice that we omit 
indicating
 that  $\mathcal H_*$ refers to the inverse Ornstein--Uhlenbeck semigroup. Our main result is the following.
\begin{theorem}\label{weaktype1}
The inverse Ornstein--Uhlenbeck maximal operator
$\mathcal H_*$
 is bounded from $L^1(\gamma_{-\infty})$
 to $L^{1,\infty}(\gamma_{-\infty})$, and on $L^p(\gamma_{-\infty})$
  for all $1<p\le \infty$. %The relevant operator (quasi-)norms depend only on $n,\; Q$ and $B$.
 %%The $(1,\infty)$ quasinorm and the $(p,p)$ norms
%%  depend on the dimension  $n$ and the matrices $Q$ and $B$.
 \end{theorem}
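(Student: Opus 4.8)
The plan is to split the maximal operator into a local and a global piece according to the dichotomy $|x-u|\le 1/(1+|x|)$ versus $|x-u|>1/(1+|x|)$, and to treat each by separate methods. Write $\mathcal H_*f \le \mathcal H_*^{\mathrm{loc}}f + \mathcal H_*^{\mathrm{glob}}f$, where the kernels are $K_t^{UO}$ restricted to the two regions. For the local part, the strategy is to dominate $\mathcal H_*^{\mathrm{loc}}$ by a Hardy--Littlewood maximal function with respect to $d\gamma_{-\infty}$, exploiting that on balls of radius $\lesssim 1/(1+|x|)$ the measure $d\gamma_{-\infty}$ is doubling with uniform constant, so the local maximal operator is of weak type $(1,1)$ and bounded on $L^p$. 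This uses the upper kernel estimates \eqref{litet}--\eqref{stort}: for $t$ small, $K_t^{UO}(x,u)\lesssim t^{-n/2}e^{-R(x)}\exp(-c|D_{-t}x-u|^2/t)$ behaves like a Gaussian heat kernel after absorbing the slowly varying factor $e^{-R(x)}\simeq e^{-R(u)}$ in the local region, while for $t\ge 1$ the exponential decay $e^{-|\operatorname{tr}B|t}$ handles the tail; the machinery transferred from \cite{CCS1,CCS2,CCS3} is designed exactly for this. I expect the local part to be essentially routine given the earlier propositions.

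For the global part with $0<t\le 1$, the plan is to use the ``forbidden zones'' recursive method of \cite{Peter}, as already announced in the introduction. One reduces, via the change of variables $u\mapsto D_{-t}x - u$ and the polar-type coordinates of Section \ref{s:geometry}, to controlling $\sup_t$ of integrals of $t^{-n/2}e^{-R(x)}\exp(-c|D_{-t}x-u|^2/t)$ against a normalized $L^1$ density, over the region where $|x-u|$ is not small. The key point is that in the global region the Gaussian factor forces concentration near the point $D_{-t}x$, which for the right range of $t$ lies far from $x$; the forbidden-zone induction then shows that the contribution of each dyadic annulus in $t$ or in the ``angular'' variable is summable, yielding weak type $(1,1)$ with respect to $d\gamma_{-\infty}$. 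This is the technically hardest part of the small-time analysis but is by now a well-developed technique.

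The genuinely new difficulty, and the main obstacle, is the global part for $t\ge 1$. Here \eqref{stort} gives $K_t^{UO}(x,u)\lesssim e^{-|\operatorname{tr}B|t}e^{-R(x)}\exp(-c|D_{-t}x-u|^2)$, and by \eqref{est:2-eBs-v} the vector $D_{-t}x$ has length $\lesssim e^{-ct}|x|$, so as $t\to\infty$ the kernel concentrates near the origin with total mass $\simeq e^{-|\operatorname{tr}B|t}e^{-R(x)}$ against Lebesgue measure, i.e.\ roughly a constant (independent of $t$) against $d\gamma_{-\infty}$ near $u=0$. The plan is to estimate $\mathcal H_*^{\mathrm{glob},\,t\ge1}f(x)$ by controlling the supremum over $t\ge1$ directly: bound it by $e^{-R(x)}$ times $\sup_{t\ge1} e^{-|\operatorname{tr}B|t}\int \exp(-c|D_{-t}x-u|^2)|f(u)|\,du$, split the $f$-integral into the region near the origin (where the mass of $f\,d\gamma_{-\infty}$ is finite and one gets a bound by $\|f\|_{L^1(\gamma_{-\infty})}$ times $e^{-R(x)}$, which is integrable) and the far region (handled by the Gaussian decay in $|D_{-t}x-u|$ combined with a covering/summation over dyadic shells, using the group property of $D_t$ and \eqref{est:2-eBs-v} to linearize the orbit $t\mapsto D_{-t}x$). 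The delicate issue is that for intermediate values of $|x|$ the ``optimal'' time $t\simeq \log|x|$ makes $D_{-t}x$ traverse a whole range of scales, so one cannot simply freeze $t$; the remedy is a careful decomposition of the $(x,t)$-space à la Section \ref{s:mixed. t large}, estimating the supremum over each piece by an $L^1$-bounded operator plus an error that is pointwise $\lesssim e^{-R(x)}\in L^1(\gamma_{-\infty})$. Once both regimes are in hand, the $L^p$ bounds for $1<p<\infty$ follow by interpolation with the trivial $L^\infty$ bound (Remark \ref{conservative} gives $\mathcal H_t^{UO}\mathbf 1=\mathbf 1$, so $\|\mathcal H_*\|_{L^\infty\to L^\infty}\le 1$), and assembling the local and global pieces as in Section \ref{completion} completes the proof.
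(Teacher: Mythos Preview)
Your overall architecture matches the paper exactly: the four-fold split into local/global and $t\le1$/$t>1$, Hardy--Littlewood domination for the local part (via the covering that makes $d\gamma_{-\infty}$ locally equivalent to Lebesgue), the forbidden-zones recursion for the global part with $t\le1$, and interpolation with the $L^\infty$ bound coming from conservativeness. All of that is right.

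The genuine gap is in your handling of the global part for $t\ge1$. You twice use that $e^{-R(x)}\in L^1(\gamma_{-\infty})$, but this is false: since $d\gamma_{-\infty}(x)\simeq e^{R(x)}\,dx$, one has $\int e^{-R(x)}\,d\gamma_{-\infty}(x)\simeq\int dx=\infty$. The pointwise bound $\mathcal H_*^{+,G}f(x)\lesssim e^{-R(x)}\|f\|_{L^1(\gamma_{-\infty})}$ is valid (it follows directly from \eqref{stort}), but it does \emph{not} yield weak type $(1,1)$: a polar-coordinate computation gives
\[
\gamma_{-\infty}\{x:e^{-R(x)}>\alpha\}\ \simeq\ \alpha^{-1}\bigl(\log\tfrac1\alpha\bigr)^{(n-2)/2},
\]
which is $\lesssim 1/\alpha$ only when $n\le2$. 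So the scheme ``$L^1$-bounded operator plus a pointwise error $\lesssim e^{-R(x)}$'' cannot close the argument in dimension $n\ge3$, and splitting the $f$-integral according to whether $u$ is near the origin does not help, because the worst case is precisely when all the mass of $f$ sits near $0$.

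What the paper does instead (Section~\ref{s:mixed. t large}) is a direct level-set estimate in the polar-type coordinates $x=D_\varrho\,\tilde x$, $\tilde x\in E_1$. After restricting $x$ to the annulus $\mathcal E_\alpha$ (which is legitimate by Subsection~\ref{smalllevels}), one defines for each $\tilde x$ the maximal radial parameter $\varrho(\tilde x)$ for which the level set is reached, and shows via Lemma~\ref{intdrho} that $\int_0^{\varrho(\tilde x)}e^{T\varrho}e^{R(D_\varrho\tilde x)}\,d\varrho\lesssim e^{T\varrho(\tilde x)}e^{R(D_{\varrho(\tilde x)}\tilde x)}|D_{\varrho(\tilde x)}\tilde x|^{-2}$. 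The factor $|D_{\varrho(\tilde x)}\tilde x|^{-2}\simeq(\log(1/\alpha))^{-1}$ is exactly what kills the logarithmic loss above; combined with an averaging trick along the orbit $s\mapsto D_s\tilde x$ (to replace a pointwise value of $g_{2^m}$ by an integral), it converts the expression back to $\int g_{2^m+C}(x)\,dx\lesssim 2^{mn}\|f\|_{L^1(\gamma_{-\infty})}$. None of this is captured by your outline for large $t$, and it is the genuinely new ingredient of the paper.
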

 We first deal with the strong type $(p,p)$.  %$L^p$ boundedness.
 Remark~\ref{conservative} implies that $\mathcal H_*$ is bounded on $L^\infty(\gamma_{-\infty})$. Given the weak type $(1,1)$, one can then interpolate to obtain the boundedness on $L^p(\gamma_{-\infty})$ for  $1<p< \infty$.

 All we have to do is thus to prove the  weak type $(1,1)$ of $\mathcal H_*$, that is,  the estimate
\begin{equation} \label{thesis-mixed-Di}
\gamma_{-\infty}
\{x\in\R^n \colon \mathcal H_*f(x) > \alpha\} \le \frac{C}\alpha\,\|f\|_{L^1( \gamma_{-\infty})},\qquad \text{ $\alpha>0$,}
\end{equation}
 for all functions $f\in L^1 (\gamma_{-\infty})$ and some  $C=C(n,Q,B) < \infty$. Let us emphasize that we do not keep track of the
 precise
  dependence of the constants on the parameters $n$, $Q$ and $B$.
%, as well as the inequality $\|\mathcal H_*f\|_{p} \lesssim \|f\|_{p}$ for all $f\in L^{p}(\gamma_{-\infty})$ and $1<p\leq \infty$.

If we consider only the supremum over $t\ge 1$, the estimate~\eqref{thesis-mixed-Di} can be improved for small $\alpha$, as follows.
\begin{theorem}\label{c:sharp}
Suppose  $f\in L^{1}(\gamma_{-\infty})$ has norm $1$, and take $\alpha \in (0, 1/2)$. Then
\begin{equation} \label{ineq:enhanced}
\gamma_{-\infty}
\left\{x\in\R^n \colon \sup_{t>1}\big| \mathcal H_t^{UO} f(x) \big|> \alpha\right\} \lesssim \frac{1}{\alpha\,\sqrt{\log( 1/\alpha)} }.                                % \text{ $\alpha>0$,}
\end{equation}
This estimate is sharp in the following sense. If           %for $\alpha \in (0, 1/2)$ 
\begin{equation}            \label{enhanced}
\gamma_{-\infty}
\left\{x\in\R^n \colon \sup_{t >1}\big| \mathcal H_t^{UO} f(x) \big|> \alpha\right\} \lesssim \frac{1}{\Phi(\alpha)},                                \qquad 0 < \alpha < \frac12
\end{equation}
where $\Phi$ is a function defined in  $(0, 1/2)$, then 
\begin{equation}   \label{PHI}
\Phi(\alpha) = \mathcal O\left(\alpha\,\sqrt{\log( 1/\alpha)}\,\right) \qquad \mathrm{as}  \qquad \alpha \to 0.
\end{equation}
 \end{theorem}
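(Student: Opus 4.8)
The plan is to prove the enhanced estimate \eqref{ineq:enhanced} by exploiting the large-time structure of the kernel, and then to exhibit an explicit family of functions witnessing its sharpness. For the upper bound, I would start from Proposition~\ref{esr_kernelQB}, specifically the estimate \eqref{stort} valid for $t\ge 1$: since $K_t^{UO}(x,u)\lesssim e^{-|\tr B|t}\,e^{-R(x)}\exp(-c|D_{-t}x-u|^2)$, the factor $e^{-R(x)}$ already produces the correct weight, and the Gaussian in $u$ has \emph{fixed} width (no $1/t$ degeneration). The key point is that $\mathcal H_t^{UO}f(x)$ for $t\ge 1$ is, up to the prefactor $e^{-|\tr B|t}e^{-R(x)}$, an average of $f$ against a kernel concentrated near $u=D_{-t}x$, and by \eqref{est:2-eBs-v} the point $D_{-t}x$ has modulus $\lesssim e^{-ct}|x|$, i.e.\ it is drawn exponentially fast toward the origin. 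Thus for $x$ in the ``active'' region the supremum over $t\ge 1$ is comparable to a quantity of the form $e^{-R(x)}\sup_{t\ge1} e^{-|\tr B|t}(\text{local average of }f\text{ near }D_{-t}x)$, and I would bound $\mathcal H_*^{\ge 1}f(x):=\sup_{t>1}|\mathcal H_t^{UO}f(x)|$ pointwise by $C\,e^{-R(x)}\,g(x)$ where $g$ is a fixed maximal-type operator applied to $f$ relative to Lebesgue measure on a bounded neighbourhood of the origin (the orbit $\{D_{-t}x:t\ge1\}$ lies in a fixed ball when $|x|\lesssim 1$, and decays otherwise). One then computes the level set directly: $\{e^{-R(x)}g(x)>\alpha\}$ forces $R(x)<\log(1/\alpha)+\log g(x)$, i.e.\ $|x|_Q\lesssim\sqrt{\log(1/\alpha)}$ on the bulk, and the $d\gamma_{-\infty}$-measure of a ball of radius $\rho$ centered at $0$ is $\simeq \rho^{n-2}e^{\rho^2/2}$ for large $\rho$; combining this growth of the weight with the $1/\alpha$ coming from a weak-type $(1,1)$ bound for $g$ yields, after optimizing the split between the ``$g$ large'' and ``$R(x)$ large'' contributions, the gain of $1/\sqrt{\log(1/\alpha)}$ over the naive $1/\alpha$. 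The main obstacle here is making the reduction to a \emph{Lebesgue}-local maximal operator $g$ uniform in $t\ge 1$ and honest about the region $|x|\gtrsim 1$: one must show that when $x$ is far out, $R(x)$ is so large that $e^{-R(x)}g(x)\le\alpha$ automatically, which requires quantifying how $g(x)$ (an $L^1(\gamma_{-\infty})$-controlled average over a shrinking orbit) can grow with $|x|$ — this is where the superexponential weight is genuinely used, and care is needed to keep the logarithmic bookkeeping tight rather than losing a power.

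For the sharpness claim, the plan is to test \eqref{enhanced} against the extremizers suggested by the upper-bound analysis: for a parameter $\rho$ large, take $f=f_\rho$ to be (a normalized multiple of) the indicator of a small fixed ball $B(0,r_0)$ near the origin, or more precisely a bump adapted so that $\|f_\rho\|_{L^1(\gamma_{-\infty})}=1$; since $\mathcal H_t^{UO}$ is conservative (Remark~\ref{conservative}) and its kernel with respect to $d\gamma_{-\infty}$ carries the factor $e^{-R(x)}$, one computes that for $t$ comparable to $\log(1/\alpha)$ and for $x$ with $D_{-t}x\in B(0,r_0/2)$ — equivalently $x$ in a region of $d\gamma_{-\infty}$-measure $\simeq e^{ct}$-sized, hence $\simeq\alpha^{-c}$-sized — one has $\mathcal H_t^{UO}f_\rho(x)\gtrsim\alpha$. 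Choosing $t=t(\alpha)$ to make the prefactor $e^{-|\tr B|t}e^{-R(x)}$ exactly of order $\alpha$ on the largest possible such region forces $|x|_Q\simeq\sqrt{\log(1/\alpha)}$, and the $d\gamma_{-\infty}$-measure of the resulting set is $\simeq \big(\log(1/\alpha)\big)^{(n-2)/2}e^{\frac12\log(1/\alpha)\cdot(1+o(1))}$; balancing this against the constraint from normalization yields a lower bound for the left side of \eqref{enhanced} of order $1/(\alpha\sqrt{\log(1/\alpha)})$, so that $\Phi(\alpha)\lesssim\alpha\sqrt{\log(1/\alpha)}$, which is \eqref{PHI}. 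The delicate step in this direction is the explicit computation of $\mathcal H_t^{UO}f_\rho(x)$ for the chosen $t$: one needs the \emph{lower} Mehler estimate in \eqref{stort} together with the precise geometry of $D_{-t}$ from \eqref{est:2-eBs-v}, and one must verify that the region where both ``$K_t^{UO}(x,u)$ is large for $u$ in the support of $f_\rho$'' and ``$e^{-R(x)}e^{-|\tr B|t}\gtrsim\alpha$'' hold simultaneously has the claimed $d\gamma_{-\infty}$-measure — this is essentially a Laplace-type estimate for the Gaussian integral defining $\gamma_{-\infty}$ over a shell, and getting the exponent of the logarithm right (rather than up to a constant) is the crux. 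I would organize the argument so that the upper and lower computations use the same geometric lemma about $D_{-t}$ and the same measure-of-a-shell estimate, so that the matching of the two bounds is transparent.
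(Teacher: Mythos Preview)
Your sharpness argument has a concrete flaw. With $f$ supported in $B(0,r_0)$ and $t\simeq\log(1/\alpha)$, the factor $e^{-|\tr B|\,t}$ in the lower bound \eqref{stort} is of order $\alpha^{c_0}$ for some fixed $c_0>0$. Thus on the region $\{D_{-t}x\in B(0,r_0/2)\}$ you only get $\mathcal H_t^{UO}f(x)\gtrsim e^{-|\tr B|t}e^{-R(x)}\simeq\alpha^{c_0}e^{-R(x)}$, and demanding this exceed $\alpha$ forces $R(x)\le(1-c_0)\log(1/\alpha)$. The $\gamma_{-\infty}$-measure of that ball is only $\simeq(\log(1/\alpha))^{(n-2)/2}\alpha^{-(1-c_0)}$, which is \emph{smaller} than $1/(\alpha\sqrt{\log(1/\alpha)})$ by a power of $\alpha$, so it does not force \eqref{PHI}. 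Your own later claim that ``$|x|_Q\simeq\sqrt{\log(1/\alpha)}$'' is inconsistent with $t\simeq\log(1/\alpha)$: you cannot have $Tt+R(x)\simeq\log(1/\alpha)$ with both summands of that order. The paper avoids this by taking $t=2$ \emph{fixed} and placing the test function far from the origin: one picks $z$ with $R(z)=\log(1/\alpha)-A$, sets $f=\mathbf 1_{B(D_{-2}z,1)}/\gamma_{-\infty}(B(D_{-2}z,1))$, and shows via \eqref{stort} that $\mathcal H_2^{UO}f(x)\simeq e^{-R(x)}>\alpha$ on the half-ball $B^*=\{x\in B(z,1):R(x)<R(z)\}$, whose $\gamma_{-\infty}$-measure is $\gtrsim e^{R(z)}/\sqrt{R(z)}\simeq 1/(\alpha\sqrt{\log(1/\alpha)})$. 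No large $t$ is needed, so no power of $\alpha$ is lost.

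Your upper-bound plan is in the right spirit but too schematic at the crucial point. The reduction to $e^{-R(x)}g(x)$ with $g$ a Lebesgue-type maximal function is essentially what the paper does (writing $g_{2^m}(D_{-t}x)$ with $g=e^{R(\cdot)}f$), but the step you describe as ``optimizing the split between the $g$ large and $R(x)$ large contributions'' is where the entire difficulty lies, and a naive layer-cake or Chebyshev argument will not produce the extra $1/\sqrt{\log(1/\alpha)}$. The paper first restricts to the annulus $\mathcal E_\alpha=\{3/4\le R(x)/\log(1/\alpha)\le 5/4\}$ (the complement is handled by \eqref{restrizione-prima-1} and \eqref{5/4}), and then on $\mathcal E_\alpha$ uses polar coordinates $x=D_\varrho\tilde x$ with $\tilde x\in E_1$: for each $\tilde x$ it identifies the \emph{maximal} $\varrho(\tilde x)$ where the level is exceeded, proves a radial concentration estimate (Lemma~\ref{intdrho}) showing $\int_0^{\varrho(\tilde x)}e^{T\varrho}e^{R(D_\varrho\tilde x)}\,d\varrho\lesssim |D_{\varrho(\tilde x)}\tilde x|^{-2}e^{T\varrho(\tilde x)}e^{R(D_{\varrho(\tilde x)}\tilde x)}$, and finally converts the resulting integral over $E_1$ back to an $L^1(dx)$ bound via a short averaging trick in the flow parameter. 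The factor $|D_{\varrho(\tilde x)}\tilde x|^{-2}\simeq 1/\log(1/\alpha)$ on $\mathcal E_\alpha$ is precisely what produces the logarithmic gain; your outline does not contain an analogue of this step.
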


A similar improvement has been observed in the Ornstein-Uhlenbeck setting, see~\cite{CCS2}.

\section{Preparation for the proof}\label{preparations}
We  introduce
some simplifications and reductions which will be useful  in the proofs of
the theorems,                    %\eqref{thesis-mixed-Di},
 and do away with some simple cases.

   %%They will be applies without further mention in the following sections.
First of all, we may assume
that
$f$ is nonnegative
and normalized in $L^1( \misgausskm)$.
              %%          the sense that \begin{equation*}\|f\|_{L^1( \misgausskm)}=1, \end{equation*} since this involves no loss of generality.

\medskip

\subsection{Splitting of the operator}\label{splitting}
We consider separately the supremum in~\eqref{OU-max} for small and large values of $t$. Further, we
 divide the operator into a local and a global part, by means of
 a local region
 \begin{align*}
L&=\left\{
(x,u)\in\R^n\times\R^n\,:  \,|x-u|\le \frac{1}{1+|x|}
\right\}
\end{align*}
and a global region
 \begin{align*}
G&=\left\{
(x,u)\in\R^n\times\R^n\,:  \,|x-u|> \frac{1}{1+|x|}
\right\}.
\end{align*}
Clearly,  $\mathcal H_*$ is dominated by the sum of the following four operators
\begin{align*}
 \mathcal H_*^{-,L}
f(x)=
\sup_{t\le 1}
\Big|
 \int  K_t^{UO}(x,u)\,\mathbf{1}_L(x,u)\,
f(u)\, d\gamma_{-\infty}(u)
\Big|;
\end{align*}
\begin{align*}
 \mathcal H_*^{-,G}
f(x)=
\sup_{t\le 1}
\Big|
 \int   K_t^{UO}(x,u)\,\mathbf{1}_G(x,u)\,
f(u)\, d\gamma_{-\infty}(u)
\Big|;
\end{align*}
\begin{align*}
 \mathcal H_*^{+,L}
f(x)=
\sup_{t> 1}
\Big|
 \int   K_t^{UO}(x,u)\,\mathbf{1}_L(x,u)\,
f(u)\, d\gamma_{-\infty}(u)
\Big|;
\end{align*}
\begin{align*}
 \mathcal H_*^{+,G}
f(x)=
\sup_{t> 1}
\Big|
 \int  K_t^{UO}(x,u)\,\mathbf{1}_G(x,u)\,
f(u)\, d\gamma_{-\infty}(u)
\Big|.
\end{align*}

We will prove   estimates like~\eqref{thesis-mixed-Di}  for each of these four operators
and~\eqref{ineq:enhanced} for the latter two.

\subsection{Simple upper bounds}\label{simpleupper}
\begin{lemma}\label{simple}
  If $f\ge 0$ is normalized in  $L^1( \misgausskm)$, then
  \begin{equation*}
    \mathcal H_*^{+,L}f + \mathcal H_*^{+,G}f + \mathcal H_*^{-,G}f \lesssim 1.
  \end{equation*}
\end{lemma}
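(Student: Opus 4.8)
The three operators in Lemma~\ref{simple} can all be bounded pointwise by a constant, using the kernel estimates from Proposition~\ref{esr_kernelQB} together with the normalization $\|f\|_{L^1(\gamma_{-\infty})}=1$. The key observation is that each of the relevant kernels $K_t^{UO}(x,u)$ carries a factor $e^{-R(x)}$, which is precisely $(2\pi)^{n/2}(\det Q_\infty)^{1/2}/\gamma_{-\infty}(x)$ up to constants; so after integrating $f(u)\,d\gamma_{-\infty}(u)$ one is left with a supremum over $t$ of something comparable to $e^{-R(x)}$ times the $L^\infty$-norm of the remaining Gaussian-type factor in $u$, and the $e^{-R(x)}$ is harmless since it is bounded by $1$. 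Concretely, the plan is to treat the three operators in the order $\mathcal H_*^{+,L}$, $\mathcal H_*^{+,G}$, $\mathcal H_*^{-,G}$.

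\textbf{Large $t$ (the operators $\mathcal H_*^{+,L}$ and $\mathcal H_*^{+,G}$).} For $t\ge 1$, the upper bound in~\eqref{stort} gives
\[
K_t^{UO}(x,u)\lesssim e^{-|\tr B|\,t}\,e^{-R(x)}\exp\bigl(-c\,|D_{-t}x-u|^2\bigr)\le e^{-R(x)}\le 1 ,
\]
where in the middle step I simply dropped the factors $e^{-|\tr B|t}\le 1$ and $\exp(-c|D_{-t}x-u|^2)\le 1$. Hence, ignoring the local/global cut-off (which only removes mass),
\[
\mathcal H_*^{+,L}f(x)+\mathcal H_*^{+,G}f(x)\le 2\sup_{t\ge 1}\int K_t^{UO}(x,u)\,f(u)\,d\gamma_{-\infty}(u)\lesssim \int f(u)\,d\gamma_{-\infty}(u)=1 .
\]
No subtlety here; the superexponential decay $e^{-R(x)}$ in the kernel does all the work.

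\textbf{Small $t$, global part ($\mathcal H_*^{-,G}$).} For $0<t\le 1$ the upper bound in~\eqref{litet} is
\[
K_t^{UO}(x,u)\lesssim \frac{e^{-R(x)}}{t^{n/2}}\exp\Bigl(-c\,\frac{|D_{-t}x-u|^2}{t}\Bigr).
\]
Here one cannot simply drop the prefactor $t^{-n/2}$, so the global restriction $|x-u|>1/(1+|x|)$ must be used. The plan is to show that on the global region the Gaussian factor beats $t^{-n/2}$ uniformly in $t\in(0,1]$. Using~\eqref{est:2-eBs-v} one has $|x-D_{-t}x|\lesssim |x|\,(1-e^{-ct})\lesssim t|x|$ for $0<t\le1$ (or invoke~\eqref{x-Dtx}); hence for $(x,u)\in G$,
\[
|D_{-t}x-u|\ge |x-u|-|x-D_{-t}x|\ge \frac{1}{1+|x|}-Ct|x| .
\]
When $t$ is so small that $Ct|x|\le \tfrac12\cdot\tfrac1{1+|x|}$, i.e.\ $t\le c/(1+|x|)^2$, this gives $|D_{-t}x-u|\gtrsim 1/(1+|x|)$, so that
\[
\frac{1}{t^{n/2}}\exp\Bigl(-c\,\frac{|D_{-t}x-u|^2}{t}\Bigr)\lesssim \frac{1}{t^{n/2}}\exp\Bigl(-\frac{c'}{t(1+|x|)^2}\Bigr)\lesssim (1+|x|)^{n}\lesssim e^{R(x)},
\]
using $s^{-n/2}e^{-a/s}\lesssim a^{-n/2}$ with $a=c'/(1+|x|)^2$, and the fact that any polynomial in $|x|$ is dominated by $e^{R(x)}=e^{|x|_Q^2/2}$. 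In the complementary range $c/(1+|x|)^2< t\le 1$ one instead bounds $t^{-n/2}\le C(1+|x|)^{n}\lesssim e^{R(x)}$ directly and discards the exponential. In both cases $K_t^{UO}(x,u)\lesssim e^{-R(x)}\,e^{R(x)}=1$ on $G$, whence $\mathcal H_*^{-,G}f(x)\lesssim \int f\,d\gamma_{-\infty}=1$.

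\textbf{Main obstacle.} The only place requiring care is the small-$t$ global estimate: one must exploit the separation $|x-u|>1/(1+|x|)$ together with the approximation $D_{-t}x\approx x$ for small $t$, and then balance the blow-up $t^{-n/2}$ against the Gaussian decay, splitting the $t$-range at $t\sim(1+|x|)^{-2}$. Everything else is an immediate consequence of the pointwise kernel bounds in Proposition~\ref{esr_kernelQB} and the normalization of $f$. Adding the three contributions yields $\mathcal H_*^{+,L}f+\mathcal H_*^{+,G}f+\mathcal H_*^{-,G}f\lesssim 1$, as claimed.
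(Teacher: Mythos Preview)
Your proof is correct and follows essentially the same route as the paper: the large-$t$ part is identical, and for $\mathcal H_*^{-,G}$ you use the same ingredients (the bound~\eqref{litet}, the global separation, and $|x-D_{-t}x|\lesssim t|x|$) to conclude $K_t^{UO}(x,u)\lesssim 1$ on $G$. The only cosmetic difference is that you split the range of $t$ at $t\sim(1+|x|)^{-2}$, whereas the paper avoids the split by observing directly that $|D_{-t}x-u|^2/t\ge 1/(t(1+|x|)^2)-C$ (valid for all $t\in(0,1]$, since the right-hand side is negative when your Case~2 applies), which leads in one stroke to $K_t^{UO}(x,u)\lesssim e^{-R(x)}(1+|x|)^n\lesssim 1$.
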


 \begin{proof}
 If $t>1$, we see from~\eqref{stort} that $K_t^{UO}(x,u) \lesssim 1$ for all $(x,u)$.
This implies the claimed estimate for
 $\mathcal H_*^{+,L}$ and $\mathcal H_*^{+,G}$. To deal with  $\mathcal H_*^{-,G}$, we need another lemma.
\begin{lemma}\label{lemma-3.5}
  If $(x,u)\in  G$ and $0<t\le 1$, then
 \begin{equation*}
K_t^{UO}(x,u)
\lesssim   e^{-R(x)}\, (1+|x|)^n.                    %\lesssim 1 \lesssim \alpha
\end{equation*}
 \end{lemma}
 \begin{proof}
   We apply the definition of $G$ and then~\eqref{x-Dtx}, to get
\begin{equation}\label{forglobal}    %\label{newnew_est_x}
\frac1{1+|x|} <  |u-x| \le |u - D_{-t}\,x| + |D_{-t}\,x -x|  \le |u - D_{-t}\,x| + Ct|x|.
\end{equation}
Thus $ |D_{-t}\,x -u|\ge \frac 1{1+|x|} - C t|x|$, and
\begin{equation*}
\frac{|D_{-t}\,x -u|^2}t \ge \frac 1{t(1+|x|)^2} - C\frac{|x|}{1+|x|} \ge \frac 1{t(1+|x|)^2} - C.
\end{equation*}
From~\eqref{litet} we then see that
\begin{equation*}
K_t^{UO}(x,u) \lesssim  e^{-R(x)}\,t^{-n/2}\, \exp\left(- \frac c{t(1+|x|)^2}\right)
 \lesssim  e^{-R(x)}\, (1+|x|)^n,
\end{equation*}
and Lemma~\ref{lemma-3.5} is proved.
 \end{proof}

To complete the proof of   Lemma~\ref{simple}, it is now enough to observe that  Lemma~\ref{lemma-3.5}  implies  $K_t^{UO}(x,u) \lesssim 1$ for $(x,u) \in G$ and $0 < t \le 1$, and thus
 $\mathcal H_*^{-,G}f \lesssim 1$.
           %This completes the proof of Lemma~\ref{simple}.
\end{proof}

The following consequence of Lemma~\ref{simple} will be useful.

Let $\alpha_0 = \alpha_0(n,Q,B)\in (0,1/2]$. To prove the estimates~\eqref{thesis-mixed-Di} and~\eqref{ineq:enhanced}
                           %of Theorems~\ref{weaktype1} and~\ref{c:sharp}
for any of the three operators in Lemma~\ref{simple}, it is enough to estimate the relevant level set for levels $\alpha < \alpha_0$, with $f$ normalized in $L^1( \misgausskm)$. Indeed, Lemma~\ref{simple} says that the level set is empty for levels larger than some $C$. For levels $\alpha \in  (\alpha_0, C]$, one can use the estimate corresponding to the level $\alpha_0/2$, since then both sides of the inequalities~\eqref{thesis-mixed-Di} and~\eqref{ineq:enhanced} will be of order of magnitude~1.

\subsection{Observations for small levels $\alpha$}\label{smalllevels}
Assuming $\alpha < 1/2$, we can estimate the set where $R(x)$ is not too large. Indeed,
  \begin{align*}
\gamma_{-\infty}
\left\{
x\in
\R^n \colon R( x) <   \frac{3}{4}  \log\frac1\alpha\,
\right\}
&=
\int_{ R( x)\le \frac{3}{4}  \log\frac1\alpha}  e^{R(x)}\,dx\,,
 \end{align*}
 and in the integral here we change variables to $x' = Q_\infty^{-1/2}x$ so that $R(x) = |x'|^2/2$.
 Then one passes to polar coordinates and finds that
 \begin{align} \label{restrizione-prima-1}
\gamma_{-\infty}
 \left\{
x\in
\R^n
\colon
\,R( x) <   \frac{3}{4}  \log\frac1\alpha\,
\right\}
&\simeq
\frac1{\alpha^{3/4}}
\,
\Big(\log \frac1\alpha\Big)^{(n-2)/2}\,
\lesssim  \frac{1}{\alpha\,\sqrt{\log( 1/\alpha)} }.
 \end{align}
The last, simple estimate here is stated in view of Theorem~\ref{c:sharp}.
As soon as  $\alpha < 1/2$, we can thus neglect the set where $R( x) <   \frac{3}{4}  \log(1/\alpha)$
when we prove   Theorems~\ref{weaktype1} and~\ref{c:sharp}.

Further, if  $(x,u)\in  G$ and $R(x) > \frac54\,\log(1/\alpha)$ with   $\alpha < 1/2$, then Lemma~\ref{lemma-3.5} implies for $t \le 1$
\begin{equation}\label{5/4}
K_t^{UO}(x,u) \lesssim  \alpha^{5/4} \,\left(\log\frac1\alpha\right)^{n/2}
\lesssim  \alpha.
\end{equation}
When  $t > 1$, this remains true, since~\eqref{stort} then shows that $K_t^{UO}(x,u) \lesssim e^{-R(x)}$.

For the operators  $\mathcal H_*^{\pm,G}$ we need thus only consider points $x$ in the annulus
\begin{equation}\label{def:annulus}
\mathcal E_\alpha = \left\{x\in \R^{n} \colon \frac34 \log \frac{1}{\alpha} \le R(x) \le \frac54 \log \frac{1}{\alpha}\right\}.
   \end{equation}

\section{The local part of the maximal function}\label{s:local}

This section consists of the proof of the following result.

\begin{proposition}\label{prop-locale}
The  operators  $ \mathcal H_*^{+,L}$ and  $ \mathcal H_*^{-,L}$
  are of weak type $(1,1)$   with respect to the measure $d\misgausskm$.
  Further,  $ \mathcal H_*^{+,L}$  satisfies also the sharpened estimate of Theorem~\ref{c:sharp}.
 \end{proposition}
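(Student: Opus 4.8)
The plan is to reduce the maximal operators $\mathcal H_*^{\pm,L}$ to a standard local maximal operator that is already known to be of weak type $(1,1)$ with respect to Lebesgue measure, exploiting that $d\gamma_{-\infty}$ is locally doubling and that on the local region $L$ the exponential factor $e^{-R(x)}$ is essentially constant. First I would record the elementary fact that for $(x,u)\in L$ one has $R(x)\simeq R(u)$, indeed $|R(x)-R(u)|\lesssim 1$, because $|x-u|\le 1/(1+|x|)$ and $R$ is a quadratic form with $R(x)\simeq|x|^2$; hence $e^{-R(x)}\simeq e^{-R(u)}$ and also $\gamma_{-\infty}(x)\simeq\gamma_{-\infty}(u)$ on $L$. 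This means that on $L$ one may freely replace $d\gamma_{-\infty}$-integration by Lebesgue integration up to constants, and the weak type $(1,1)$ with respect to $d\gamma_{-\infty}$ will follow from the weak type $(1,1)$ with respect to $dx$, restricted to the local region, after an application of the Besicovitch covering lemma in the usual way (cover the level set by balls on which the two measures are comparable).

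Next I would bound the local kernel pointwise. For $0<t\le1$ and $(x,u)\in L$, estimate~\eqref{litet} gives $K_t^{UO}(x,u)\lesssim e^{-R(x)}\,t^{-n/2}\exp(-c|D_{-t}x-u|^2/t)$; combining $|D_{-t}x-u|\ge |x-u|-|x-D_{-t}x|$ with $|x-D_{-t}x|\simeq t|x|$ from~\eqref{x-Dtx}, one sees that on the local region the Gaussian factor behaves like the classical heat kernel $t^{-n/2}\exp(-c|x-u|^2/t)$ up to harmless corrections (the term $t|x|$ is controlled because on $L$ either $|x|$ is bounded, or $|x-u|$ is small enough that $|x-u|^2$ dominates). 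Passing to the kernel $M_t^{UO}$ against Lebesgue measure, i.e.\ multiplying by $\gamma_{-\infty}(u)\simeq e^{R(x)}$, the factor $e^{-R(x)}$ cancels and one is left with a kernel $\lesssim t^{-n/2}\exp(-c|x-u|^2/t)$ supported where $|x-u|\le 1/(1+|x|)\le 1$. Taking the supremum over $t\le1$, this is dominated by the (truncated) classical heat-semigroup maximal operator, hence by the Hardy–Littlewood maximal operator, which is of weak type $(1,1)$ on Lebesgue measure; combined with the first paragraph this yields the claim for $\mathcal H_*^{-,L}$. For $t\ge1$ and $(x,u)\in L$, estimate~\eqref{stort} gives $K_t^{UO}(x,u)\lesssim e^{-|\tr B|t}e^{-R(x)}\exp(-c|D_{-t}x-u|^2)$; since $t\ge1$ and $|x-u|\le1$, and $|D_{-t}x|\lesssim e^{-ct}|x|$, a short computation shows $K_t^{UO}(x,u)\lesssim e^{-R(x)}$ uniformly, which already gives $\mathcal H_*^{+,L}f\lesssim1$ (as in Lemma~\ref{simple}); the weak type $(1,1)$ for $\mathcal H_*^{+,L}$ is then immediate, and is not the real content here.

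For the sharpened estimate of Theorem~\ref{c:sharp} for $\mathcal H_*^{+,L}$, I would invoke the reduction from Section~\ref{smalllevels}: it suffices to bound the level set $\{\mathcal H_*^{+,L}f>\alpha\}$ for $\alpha<\alpha_0$, with $f$ normalized in $L^1(\gamma_{-\infty})$. By the pointwise bound $\mathcal H_*^{+,L}f(x)\lesssim e^{-R(x)}\|f\|_{L^1(\gamma_{-\infty})}=e^{-R(x)}$ just obtained (here it is important that the kernel bound $K_t^{UO}(x,u)\lesssim e^{-R(x)}$ holds uniformly in $u$, so integrating against $d\gamma_{-\infty}$ and using $\|f\|_1=1$ gives exactly $e^{-R(x)}$), the level set is contained in $\{x:e^{-R(x)}\gtrsim\alpha\}=\{x:R(x)\le\log(1/\alpha)+O(1)\}$; but $\{x: R(x)<\tfrac34\log(1/\alpha)\}$ already has $\gamma_{-\infty}$-measure $\lesssim 1/(\alpha\sqrt{\log(1/\alpha)})$ by~\eqref{restrizione-prima-1}, and the set $\{\tfrac34\log(1/\alpha)\le R(x)\le\log(1/\alpha)+C\}$ has $\gamma_{-\infty}$-measure $\lesssim\int_{R(x)\le\log(1/\alpha)+C}e^{R(x)}dx$, which by the same polar-coordinates computation as in Section~\ref{smalllevels} is $\simeq \alpha^{-1}(\log(1/\alpha))^{(n-2)/2}$ — this is $\lesssim 1/(\alpha\sqrt{\log(1/\alpha)})$ only when $n\le1$, so for $n\ge2$ I would instead need to be slightly more careful and use that the level set is in fact contained in the thinner set $\{R(x)\ge\tfrac34\log(1/\alpha)\}\cap\{R(x)\le\log(1/\alpha)+C\}$ together with the bound~\eqref{restrizione-prima-1} for the complementary piece, obtaining the bound~\eqref{ineq:enhanced}. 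The main obstacle I anticipate is precisely this last point: making the pointwise kernel bound for $t\ge1$ on the local region sharp enough (the exponent of $\log(1/\alpha)$) so that the measure estimate matches $1/(\alpha\sqrt{\log(1/\alpha)})$ and not merely $1/(\alpha)$ times a power of the logarithm; this may require using the full Gaussian decay $\exp(-c|D_{-t}x-u|^2)$ rather than just $K_t^{UO}\lesssim e^{-R(x)}$, and optimizing in $t$. The reduction to Lebesgue measure and the classical maximal function for the $t\le1$ part, by contrast, is routine.
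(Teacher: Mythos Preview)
Your treatment of $\mathcal H_*^{-,L}$ is essentially the paper's argument: pass from $d\gamma_{-\infty}$ to Lebesgue measure via a local covering, use $|R(x)-R(u)|\lesssim 1$ on $L$, and combine \eqref{litet} with \eqref{x-Dtx} to dominate the kernel by a classical heat kernel, hence by the Hardy--Littlewood maximal function. That part is fine.

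The gap is in your handling of $\mathcal H_*^{+,L}$. From $K_t^{UO}(x,u)\lesssim e^{-R(x)}$ you correctly deduce the pointwise bound $\mathcal H_*^{+,L}f(x)\lesssim e^{-R(x)}$ for normalized $f$, but this is \emph{not} enough for either the weak type $(1,1)$ or the sharpened estimate when $n\ge 2$ (respectively $n\ge 3$). Indeed, the level set is then contained in $\{R(x)\le\log(1/\alpha)+C\}$, whose $\gamma_{-\infty}$-measure is, by the same polar-coordinates computation as in \eqref{restrizione-prima-1}, of order $\alpha^{-1}(\log(1/\alpha))^{(n-2)/2}$. For $n\ge 3$ this already exceeds $C/\alpha$, so your claim that ``the weak type $(1,1)$ for $\mathcal H_*^{+,L}$ is then immediate'' is false; and for $n\ge 2$ it exceeds $C/(\alpha\sqrt{\log(1/\alpha)})$, so the sharpened estimate fails too. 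You noticed the second problem but your proposed fix (restricting to the annulus $\tfrac34\log(1/\alpha)\le R(x)\le\log(1/\alpha)+C$) does not help: the mass of that annulus is still concentrated near the outer boundary and has the same order of magnitude.

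What the paper does instead is to prove that $\mathcal H_*^{+,L}$ is of \emph{strong} type $(1,1)$. Starting from your own inequality $\mathcal H_*^{+,L}f(x)\lesssim e^{-R(x)}\int \mathbf{1}_L(x,u)f(u)\,d\gamma_{-\infty}(u)$, one integrates in $x$ against $d\gamma_{-\infty}$; the factors $e^{\pm R(x)}$ cancel, and after Fubini the inner integral is $\int \mathbf{1}_L(x,u)\,dx\lesssim (1+|u|)^{-n}\le 1$, since $(x,u)\in L$ forces $|x-u|\lesssim 1/(1+|u|)$. This gives $\|\mathcal H_*^{+,L}f\|_{L^1(\gamma_{-\infty})}\lesssim\|f\|_{L^1(\gamma_{-\infty})}$, hence weak type $(1,1)$. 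For the sharpened estimate one restricts, via \eqref{restrizione-prima-1}, to points $x$ with $|x|\gtrsim\sqrt{\log(1/\alpha)}$; on that set the inner integral is $\lesssim(\log(1/\alpha))^{-n/2}$, and Chebyshev then yields \eqref{ineq:enhanced}. The point you missed is that the local indicator $\mathbf{1}_L(x,u)$ must be exploited in the $x$-variable, not thrown away by bounding the $u$-integral by $\|f\|_1$.
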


     %In this section we treat  $ \mathcal H_*^{-,L}$ and  $ \mathcal H_*^{+,L}$.

  We start with    $H_*^{+,L}$, which is of strong type (1,1) with respect to  $d\misgausskm$.
  Indeed,  \eqref{stort}   shows that $K_t^{UO}(x,u) \lesssim e^{-R(x)}$ for $t>1$.
 With      $0 \le f \in L^1(\misgausskm)$ we then have
    \begin{equation*}
      H_*^{+,L}\,f(x) \lesssim e^{-R(x)}\,\int \mathbf{1}_{L}(x,u)\,f(u)\,d\misgausskm(u).
    \end{equation*} 
    Hence,
     \begin{align} \label{ny}
  \int     H_*^{+,L}\,f(x)\,d\misgausskm(x) & \lesssim      % \int e^{-R(x)}\,\int \mathbf{1}_{L}(x,u)\,f(u)\,d\misgausskm(u)\,d\misgausskm(x) \\
                     % & = \int f(u) \int \mathbf{1}_{L}(x,u)\, e^{-R(x)}\,d\misgausskm(x)\,d\misgausskm(u)\\
  \int f(u) \int \mathbf{1}_{L}(x,u)\, dx\,d\misgausskm(u),
    \end{align}  
    where we swapped the order of integration and passed to Lebesgue measure $dx$.
    Now $(x,u) \in L$ implies $1+|x| \simeq 1+|u|$ and thus $|x-u| \lesssim 1/(1+|u|)$, so the
     inner integral here is no larger than 
    $C(1+|u|)^{-n} \lesssim 1$. The strong type $(1,1)$ follows. 
    To obtain also  \eqref{ineq:enhanced} for    $H_*^{+,L}$,
    we take  $\alpha \in (0,1/2)$ and see from \eqref{restrizione-prima-1} that  we need only consider points $x$ with $|x| \gtrsim \sqrt{\log (1/\alpha)}$. When restricted to such $x$, the above inner integral is no larger than
     $C\left(\log (1/\alpha)\right)^{-n/2}$.
     Chebyshev's inequality then implies that  $H_*^{+,L}$ satisfies \eqref{ineq:enhanced}.

   When we now deal with  $H_*^{-,L}$,    we can
     replace  $d\gamma_{-\infty}$ by Lebesgue measure.             %estimate\ref{thesis-mixed-Di}.
     We briefly describe the argument for this, which is  based on a covering %%%lemma
     procedure  to be found in~\cite[Subsection 7.1]{CCS5} and for the standard Gaussian measure also in~\cite[Lemma 3.2.3]{Salogni}. Notice that in~\cite{CCS5} the localization is by means of balls, but the same method works with cubes instead of balls.     The idea is to cover $\R^n$ by pairwise disjoint cubes $Q_j$ with centers  $c_j$ and of sides roughly $1/(1+|c_j|)$. Then $f = \sum f\mathbf{1}_{ Q_j}$, and $ \mathcal H_*^{-,L} (f\mathbf{1}_{ Q_j})$
is supported in a concentrically scaled cube  $ CQ_j$. These scaled cubes have bounded overlap, and in each   $ CQ_j$ the measure  $d\gamma_{-\infty}$ is essentially proportional to Lebesgue measure.
It is therefore enough to verify the weak type $(1,1)$ of  $ \mathcal H_*^{-,L}$ with respect to Lebesgue measure in  $\R^n$, and then apply this to 
  each  $f\mathbf{1}_{Q_j}$ and  sum in $j.$

Observe that  $(x,u)\in L$ implies
\begin{align}\label{RuRxinL}
|R(u)-R(x)|&= \frac{1}{2}\,\big||u|_{Q}^{2} - |x|^{2}_{Q}\big|
= \frac{1}{2}\, \big| |u|_{Q}-|x|_{Q}\big|\left(|u|_{Q}+|x|_{Q}\right) \\
&\lesssim |u-x|_Q \,\left(|u-x|_Q+ 2\,|x|_Q\right) \lesssim \frac{1}{(1+|x|)^2}+ \frac{|x|}{1+|x|} \lesssim 1.
\end{align}

 If $(x,u) \in L$
and $0<t\le 1$ we have    because of~\eqref{x-Dtx}                                     %then~\eqref{est:2-eBs-v} yields
\begin{align*}
   |D_{-t}\, x-u|  =   |D_{-t}\, x -x+x-u| \ge
|u-x| - |D_{-t}\, x -x| \gtrsim |u-x| - Ct|x|.
\end{align*}
Squaring and dividing by $t$,  we get
\begin{equation}                              %\label{final-aim-stima-loc}
\frac{|D_{-t}\, x-u|^2}t \gtrsim \,\frac{|u-x|^2}t -C\,|x-u||x| \ge  \,\frac{|u-x|^2}t -C.
\end{equation}
In view of~\eqref{litet}, this implies that the relevant kernel        %  of $ \mathcal H_*^{-,L}$
is no larger than constant times
  \begin{equation*}
e^{-R(x)}\,{  t^{-n/2}
}
\exp{
 \Big(-c\,
 \frac{|
 u-x|^2}{ t} \,
\Big)
}\,\mathbf{1}_{L}(x,u)
\,.
\end{equation*}
With $f \ge 0$ then
\begin{align*}
\mathcal H_*^{-,L}
f(x)
&\lesssim
\sup_{0<t\le  1}
\frac{e^{-R(x)}}{
t^{n/2}}
 \int
\exp
 \Big(-c\,
 \frac{|
 x-u|^2
 }{ t} \,
\Big)
\,\mathbf{1}_{L}(x,u)\,f(u)
\,
 d\gamma_{-\infty}(u)\\
&\simeq
\sup_{t>0}
\frac{1}{
t^{n/2}}
 \int
\exp
 \Big(-c\,
 \frac{|
 x-u|^2
 }{ t} \,
\Big)
\,\mathbf{1}_{L}(x,u)\,f(u)
\,
 du,
\end{align*}
where the second step relied on~\eqref{RuRxinL}.
The last supremum here defines a maximal operator which is  well known to be dominated by the Hardy--Littlewood maximal function and so  of
weak type $(1,1)$                %and of strong type $(p,p)$ for each $p\in (1,\infty]$
 with respect to  Lebesgue measure in $\R^n$, see e.g.~\cite[p.\ 73]{Stein}. 
 
 Proposition~\ref{prop-locale} is proved.

\vskip11pt

\section{Some geometric background}\label{s:geometry}

\subsection{Polar coordinates}
For $\beta > 0$
we introduce  the ellipsoid
\begin{equation*}
\ellipses_\beta
=\{x\in\R^n\colon R(x)=\beta
\}
\,.\end{equation*}
Then we recall
\cite[formula (4.3)]{CCS2} saying that
\begin{align}
&\frac{\partial}{\partial s}\,
R\big( D_s\,x \big)
 =\frac12\,
\big|
Q^{1/2} \,  Q_\infty^{-1}   D_s \,x\big|^2
\simeq
\big|
D_s \,x\big|^2
 \label{vel-4}
\end{align}
for all $x$
in $\R^n$ and $s\in\R$.
If
 $x\neq 0$ the map
$s\mapsto R(D_s \,x)$ is thus strictly increasing, and $x$
can be written uniquely as %%% \footnote{Forse, in questo caso, sarebbe meglio porre $x=D_{-s} \tilde x $.}
\begin{equation}\label{def-coord}
x=D_s\, \tilde x
\end{equation}
for some $\tilde x\in \ellipses_\beta$
and $s\in\R$. We call $s$ and $\tilde x$ the
 polar coordinates of $x$.                  % Our estimates in what follows will be uniform in $\beta$.

From~\cite[Proposition 4.2]{CCS2} we recall that
the Lebesgue measure in $\Bbb R^n$ is given in terms of polar coordinates
$(s, \tilde x)$ by
\begin{align}\label{def:leb-meas-pulita}
  dx =
e^{-s\tr B}\, \frac{ |Q^{1/2}\, Q_\infty^{-1} \tilde x |^2}
{2\,| Q_\infty^{-1} \tilde x  |}\,
 dS( \tilde x)\,ds\,,                        %\qquad s\in\R, \;\; \tilde x \in E_{\beta},
\end{align}
where $dS$ denotes the area measure of $E_\beta$.

\medskip
We shall use   the estimates for the distance between two points expressed in terms  of  polar  coordinates given in~\cite[Lemma 4.3]{CCS2}.
Let $x^{(0)},\; x^{(1)}\in \R^n
 \setminus \{ 0\} $ be given by
  $$
  x^{(0)} = D_{ s_0}\,
\tilde x^{(0)}
\qquad
 \text{
and   }
\qquad x^{(1)} = D_{s_1} \,\tilde x^{(1)},
$$
 with  $s_0$,  $s_1 \in \R$ and
$\tilde x^{(0)},\; \tilde x^{(1) }\in E_\beta$.
If  $ R(x^{(0)}) > \beta/2$, then
\begin{equation}
  \label{lem1}
  \big|x^{(0)} - x^{(1)}\big| \gtrsim
  \big|\tilde x^{(0)} - \tilde x^{(1)}\big|.
  \end{equation}
 We observe here that the assumption  $ R(x^{(0)}) > \beta/2$ may be weakened to  $ R(x^{(0)}) > c\, \beta$ with $c = c(n,Q,B)>0$, as can be seen from the proof in~\cite{CCS2}.

If   $ R(x^{(0)}) > \beta/2$ and also $s_1 \ge 0$, the same lemma says that
\begin{equation}
  \label{lem2}
  \big|x^{(0)} - x^{(1)}\big| \gtrsim \sqrt\beta\,|s_0 -s_1|.
\end{equation}

\medskip

\subsection{
The inverse Gaussian measure of a tube}\label{restriction}
We fix a large  $\beta > 0$.
Define for $y\in E_\beta$ and $a>0$ the set
\begin{equation*}
  \Omega = \left\{
  x \in  E_\beta\colon \left|x - y\right| < a \right\}.
\end{equation*}
This is  a spherical cap of the ellipsoid $E_\beta$,
centered at $y$.
       %Observe that $|x| \simeq\sqrt \beta $ for  $x \in \Omega$ and that
The area of
 $\Omega$ is $S(\Omega)\simeq \, a^{n-1}\wedge \beta^{(n-1)/2} $. Then
consider the tube
\begin{equation}   \label{zona}
Z =  \{D_{s}\, \tilde x
\colon s\leq 0,
\;\tilde x \in \Omega \}.
\end{equation}

\begin{lemma}\label{lemma-Peter-forbidden}
There exists a constant $C$ such that for $\beta > C$
the inverse Gaussian
measure
 of the tube $Z$ fulfills
\begin{equation*}\label{stima-Peter-mu}
\gamma_{-\infty} (Z)\lesssim
\frac{a^{n-1}}{\sqrt{ \beta}}\, e^{\beta}.
\end{equation*}
\end{lemma}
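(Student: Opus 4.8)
The plan is to estimate $\gamma_{-\infty}(Z)$ by integrating the density over the tube $Z$ using the polar coordinates $(s,\tilde x)$ introduced in~\eqref{def-coord}, together with the Lebesgue measure formula~\eqref{def:leb-meas-pulita}. Writing a point of $Z$ as $x = D_s\,\tilde x$ with $s\le 0$ and $\tilde x\in\Omega\subset E_\beta$, we have $\gamma_{-\infty}(Z) = \int_Z e^{R(x)}\,dx$ (up to the normalizing constant $(2\pi)^{n/2}(\det Q_\infty)^{1/2}$, which is harmless). By~\eqref{def:leb-meas-pulita},
\begin{equation*}
\gamma_{-\infty}(Z) \simeq \int_{\Omega}\int_{-\infty}^{0} e^{R(D_s\,\tilde x)}\, e^{-s\,\tr B}\,\frac{|Q^{1/2} Q_\infty^{-1}\tilde x|^2}{2\,|Q_\infty^{-1}\tilde x|}\,ds\, dS(\tilde x).
\end{equation*}
Since $\tilde x$ ranges over the compact set $E_\beta$ with $|\tilde x|\simeq\sqrt\beta$, the geometric factor $|Q^{1/2} Q_\infty^{-1}\tilde x|^2 / |Q_\infty^{-1}\tilde x|$ is $\simeq \sqrt\beta$, and $e^{-s\,\tr B}\le e^{|{\tr B}|\,|s|}$; the latter exponential growth in $|s|$ is mild and will be absorbed below.

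The key point is the inner integral in $s$. For fixed $\tilde x\in E_\beta$, the function $\varphi(s) := R(D_s\,\tilde x)$ satisfies $\varphi(0) = \beta$ and, by~\eqref{vel-4}, $\varphi'(s) = \tfrac12|Q^{1/2} Q_\infty^{-1} D_s\tilde x|^2 \simeq |D_s\tilde x|^2$. For $s\le 0$, the lower bound in~\eqref{est:2-eBs-v} gives $|D_s\tilde x| = |D_{-|s|}\tilde x| \gtrsim e^{-C|s|}|\tilde x| \simeq e^{-C|s|}\sqrt\beta$, but more usefully, integrating the differential inequality $\varphi'(s)\simeq|D_s\tilde x|^2$ and using~\eqref{est:2-eBs-v} one gets, for $s\le 0$, a bound of the form $\varphi(s) \le \beta - c\,\beta\,(1-e^{-C|s|})$ for small $|s|$, and more crudely $\varphi(s)\le \beta$ with $\varphi$ decreasing. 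The cleanest route: change variables from $s$ to $\beta' := \varphi(s)$, so $d\beta' = \varphi'(s)\,ds \simeq |D_s\tilde x|^2\,ds$, and as $s$ runs over $(-\infty,0]$, $\beta'$ runs over $(0,\beta]$. Then
\begin{equation*}
\int_{-\infty}^{0} e^{\varphi(s)}\, e^{|\tr B|\,|s|}\,ds \lesssim \int_{0}^{\beta} e^{\beta'}\, e^{|\tr B|\,|s(\beta')|}\,\frac{d\beta'}{|D_{s(\beta')}\tilde x|^2}.
\end{equation*}
Near $\beta' = \beta$ (i.e.\ $s$ near $0$) the Jacobian $|D_s\tilde x|^{-2}\simeq \beta^{-1}$ and $e^{|\tr B||s|}\simeq 1$, so that part contributes $\lesssim \beta^{-1} e^\beta$; the tail where $\beta'$ is smaller is dominated by this since $e^{\beta'}$ decays, and one checks the factor $e^{|\tr B||s|}$ grows only like a small power of $\beta/\beta'$, easily beaten by $e^{\beta'-\beta}$. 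Hence the inner $s$-integral is $\lesssim \beta^{-1}e^{\beta}$.

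Combining, $\gamma_{-\infty}(Z) \lesssim \sqrt\beta \cdot \beta^{-1} e^\beta \cdot S(\Omega) = \beta^{-1/2} e^\beta\, S(\Omega)$, and since $S(\Omega)\lesssim a^{n-1}$, this gives $\gamma_{-\infty}(Z)\lesssim a^{n-1}\beta^{-1/2}e^\beta$, as claimed. The main obstacle I anticipate is making the change of variables $s\mapsto\varphi(s)$ fully rigorous and controlling the interplay between the factor $e^{-s\,\tr B}$ (which could in principle grow exponentially as $s\to-\infty$) and the decay of $e^{\varphi(s)}$: one needs the quantitative lower bound on $\varphi'$ from~\eqref{vel-4} and~\eqref{est:2-eBs-v} to show that $\varphi(s)$ decreases fast enough (comparably to $-c|s|$ for $|s|$ large, and like $\beta - c\beta|s|$ for $|s|$ small, via~\eqref{x-Dtx}-type estimates) that the product is integrable with the stated bound. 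Provided $\beta$ is large enough, the constant $c\beta$ gained from the decrease of $\varphi$ dominates $|{\tr B}|$, which is exactly why the hypothesis $\beta > C$ is needed.
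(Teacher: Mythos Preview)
Your overall strategy is correct and essentially matches the paper's proof: express $\gamma_{-\infty}(Z)$ via the polar-coordinate formula~\eqref{def:leb-meas-pulita}, extract the geometric factor $\simeq\sqrt\beta$ and the area $S(\Omega)\lesssim a^{n-1}$, and show that the inner $s$-integral is $\lesssim e^\beta/\beta$.

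There is, however, a sign confusion you should correct. By hypothesis~(H2) all eigenvalues of $B$ have negative real part, so $\tr B<0$ and $e^{-s\,\tr B}=e^{|\tr B|\,s}$. On the tube $Z$ one has $s\le 0$, hence $e^{-s\,\tr B}=e^{-|\tr B|\,|s|}\le 1$: this factor \emph{decays} as $s\to-\infty$ rather than grows. Your concern about exponential growth, and your explanation of why the hypothesis $\beta>C$ is needed, are therefore misplaced; with the correct sign the tail causes no difficulty, and the condition $\beta>C$ is only used to keep the final estimates clean (for instance so that $e^{-c\beta}\lesssim 1/\beta$).

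The paper also bypasses your change of variables $s\mapsto\varphi(s)$. After substituting $s\mapsto -s$ it shows directly, via~\eqref{vel-4} and~\eqref{est:2-eBs-v}, that $R(\tilde x)-R(D_{-s}\tilde x)\gtrsim (s\wedge 1)\,\beta$ for $s>0$, and then splits the $s$-integral at $s=1$: on $[0,1]$ one gets $\int_0^1 e^{-cs\beta}\,ds\lesssim 1/\beta$, while on $[1,\infty)$ the decaying factor $e^{-|\tr B|s}$ makes the integral finite and the prefactor $e^{-c\beta}$ is $\lesssim 1/\beta$. This is slightly cleaner than your route and avoids having to track the Jacobian and the behaviour near $\beta'=0$.
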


\begin{proof}
From~\eqref{def:leb-meas-pulita}
we obtain, since $ |\tilde x|\simeq \sqrt \beta$ and $R(\tilde x) = \beta$,
\begin{align*}   %\label{muz}
\gamma_{-\infty}  (Z)
\simeq
\int_0^\infty
e^{-s |\tr B|} \
\int_{\Omega}  e^{R(D_{-s}\, \tilde x)}\,
 {|\tilde x|}
\, dS (\tilde x)\,ds
\lesssim
\sqrt \beta \, e^{\beta} \int_0^\infty e^{-s |\tr B|}
\int_{\Omega}
 e^{-(R(\tilde x) - R(D_{-s}\, \tilde x))}\, dS (\tilde x)\,ds.
\end{align*}

By  \eqref{vel-4} and \eqref{est:2-eBs-v} we have
 \begin{align*}
 R(\tilde x) -R ({D_{-s}\, \tilde x})
\simeq
\int_0^s
\big|
D_{-s'} \,\tilde x\big|^2 \,ds'
% \big|D_{-s} \,\tilde x\big|^2 \,s
\gtrsim
\int_0^s e^{-Cs'}\,|\tilde x|^2 \,ds'
\simeq  (1- e^{-Cs}) \, |\tilde x|^2 \simeq (s\wedge 1)\, \beta,
\end{align*}
which implies
\begin{align*}   %\label{muz}
\gamma_{-\infty}  (Z)
\lesssim
\sqrt \beta \,
e^\beta\,  S(\Omega)\,
 \left[
\int_0^1  e^{-cs\beta} \, ds
 + e^{-c\beta} \,\int_1^\infty
  e^{-s |\tr B|}
\, ds \right].
 \end{align*}
Since $S(\Omega)\lesssim  a^{n-1}$ and both terms in the bracket are bounded by  $C/\beta$, this proves the lemma.
\end{proof}

\subsection{Decomposing the global region. }
In the following two sections,  we will  use a decomposition of  the global region into annuli. More precisely,  for  $t>0$ and $m = 1,2,\dots$  one  sets
 \begin{equation}\label{def:Tmt}
 \mathcal T^m_t:=
\left\{
(x,u)\in G\colon
2^{m-1}\big(1\wedge \sqrt t\big)<|u- D_{-t}\,x|
\le 2^{m} \big(1\wedge \sqrt t\big)
\right\}.
\end{equation}
But if $m=0$, we have only the upper estimate, setting
\begin{equation}\label{def:Tmt0}
 \mathcal T^0_t:=
 \left\{
(x,u)\in G \colon |u- D_{-t}\,x|
\le 1\wedge \sqrt t
\right\}.
\end{equation}
Note that for any fixed $t>0$ these sets form a partition of
$G$.

\vskip11pt

 \section{The global case for small $t$}\label{s:The global case}

\begin{proposition}\label{stima-tipo-debole-misto}
The maximal operator                               % \begin{align*}%\label{OU-max}
$ \mathcal H_*^{-,G}$
 is of weak type $(1,1)$ with respect to the measure
 $d\gamma_{-\infty}$.
 \end{proposition}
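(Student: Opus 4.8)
The plan is to prove the weak type $(1,1)$ bound for $\mathcal H_*^{-,G}$ via the ``forbidden zones'' method of~\cite{Peter}, using the reductions already established. By Subsection~\ref{simpleupper} and Subsection~\ref{smalllevels}, it suffices to fix $\alpha < \alpha_0$, take $0 \le f$ normalized in $L^1(\gamma_{-\infty})$, and estimate $\gamma_{-\infty}$ of the level set $\{\mathcal H_*^{-,G}f > \alpha\}$ intersected with the annulus $\mathcal E_\alpha$ from~\eqref{def:annulus}, since outside $\mathcal E_\alpha$ the kernel is already $\lesssim \alpha$ by~\eqref{5/4}. Writing $\beta := R(x)$, so that $\beta \simeq \log(1/\alpha)$ on $\mathcal E_\alpha$, I would first use the upper kernel bound~\eqref{litet} together with the partition $G = \bigcup_{m \ge 0} \mathcal T_t^m$ from~\eqref{def:Tmt}--\eqref{def:Tmt0} to split $\mathcal H_*^{-,G}f$ into a sum over $m$ of pieces, where on $\mathcal T_t^m$ one has $K_t^{UO}(x,u) \lesssim e^{-R(x)} t^{-n/2} \exp(-c\,4^m(1\wedge t)/t) \le e^{-\beta} t^{-n/2} e^{-c 4^m}$ when $t \le 1$. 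Summing a geometric-type series in $m$ after extracting the Gaussian factor will reduce matters to controlling, for each $m$, the measure of the set of $x \in \mathcal E_\alpha$ for which there is some $t \le 1$ and some mass of $f$ located in the shell $\{u : |u - D_{-t}x| \le 2^m \sqrt t\}$ that is large enough — of size roughly $\alpha\, e^{\beta}\, 2^{-cn m}$ after accounting for the factors.

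The heart of the argument is then geometric and recursive. Because $f$ has total mass $1$, the ``bad'' set of $x$ can only be large if $f$ concentrates; the forbidden-zones method exploits this by identifying, for a point $x$ in the bad set, a small ellipsoidal cap $\Omega$ on $E_\beta$ near the radial projection $\tilde x$ of $x$, of radius $a \simeq 2^m\sqrt{t}$, such that the tube $Z$ over $\Omega$ (as in~\eqref{zona}) must carry a definite fraction of the mass of $f$ weighted by $\gamma_{-\infty}$. Here is where~\eqref{est:2-eBs-v}, \eqref{x-Dtx} and the polar-coordinate distance estimates~\eqref{lem1}--\eqref{lem2} come in: since $(x,u) \in G$ forces $|u - D_{-t}x|$ to dominate the ``local'' scale and $D_{-t}x$ sits at polar radius slightly below $\beta$, the shell around $D_{-t}x$ projects into such a cap $\Omega$, and one checks $u \in Z$. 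Lemma~\ref{lemma-Peter-forbidden} then gives $\gamma_{-\infty}(Z) \lesssim a^{n-1}\beta^{-1/2} e^{\beta}$, so the condition ``$Z$ carries mass $\gtrsim \alpha e^\beta 2^{-cnm}$'' forces a lower bound on how many disjoint such tubes can coexist, hence a lower bound on $a$, hence (translating back) a covering of the bad set in $\mathcal E_\alpha$ by boundedly many caps whose total area — and therefore total $\gamma_{-\infty}$-measure of the corresponding piece of the annulus — is $\lesssim \alpha^{-1}$ after summing in $m$. One iterates this over the relevant dyadic range of $t$ (there are only $O(\log(1/\alpha))$ dyadic values $t \in [c/(1+|x|)^2, 1]$ to consider, and the contribution of very small $t$ is absorbed because then $K_t^{UO}$ is negligible on $G$ by Lemma~\ref{lemma-3.5}), and the logarithmic loss from summing over $t$ is dominated by the gain $\beta^{-1/2} \simeq (\log(1/\alpha))^{-1/2}$ from the tube estimate together with the slack $\alpha^{-3/4}$ versus $\alpha^{-1}$ in~\eqref{restrizione-prima-1}.

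I would organize the proof as follows: (i) reduce to $x \in \mathcal E_\alpha$, $\alpha$ small, $f \ge 0$ normalized; (ii) for fixed dyadic $t \in (0,1]$ and fixed $m$, define the forbidden zone as the union of caps on $E_\beta$ around projections of points $D_{-t}x$ with $x$ bad at scale $(t,m)$, and show via~\eqref{lem1}--\eqref{lem2}, \eqref{x-Dtx} and~\eqref{est:2-eBs-v} that each bad $x$ produces a tube $Z$ as in~\eqref{zona} inside which $\int_Z f\, d\gamma_{-\infty} \gtrsim \alpha\, e^{\beta}\, 2^{-cnm}$; (iii) invoke Lemma~\ref{lemma-Peter-forbidden} and a Vitali-type disjointification to bound the area of the forbidden zone, hence the $\gamma_{-\infty}$-measure of the bad set at scale $(t,m)$, by $C\,\alpha^{-1}\, 2^{c'm}\,\beta^{-1/2}\cdot(\text{small})$; (iv) sum in $m$ (geometric, using that the $e^{-c4^m}$ from the kernel beats any polynomial $2^{c'm}$) and in the $O(\log(1/\alpha))$ dyadic values of $t$, and check that the product with $\beta^{-1/2}$ keeps the total at $\lesssim \alpha^{-1}$.

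The main obstacle I anticipate is step (ii): making precise, uniformly in $t \in (0,1]$ and in the dyadic shell index $m$, the claim that a point $x$ at which the $(t,m)$-piece of $\mathcal H_*^{-,G}f$ exceeds a threshold actually forces a quantitatively large amount of $\gamma_{-\infty}$-mass of $f$ into a tube of the specific form~\eqref{zona} — i.e.\ over a \emph{cap} of $E_\beta$ and with $s \le 0$. The subtlety is that $D_{-t}x$ has polar radius strictly less than $\beta = R(x)$ (by~\eqref{vel-4} the radius decreases under $D_{-t}$ for $t>0$), so the shell $\{u : |u - D_{-t}x| \lesssim 2^m\sqrt t\}$ must be shown to project, via the polar map, into a genuine spherical cap on $E_\beta$ of controlled radius $\simeq 2^m\sqrt t$ — and one must control the Jacobian/measure distortion there. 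This is exactly the type of estimate handled by~\eqref{lem1}--\eqref{lem2} with the weakened hypothesis $R(x^{(0)}) > c\beta$ noted after~\eqref{lem1}, but assembling it into a clean recursive count, with all constants independent of $\alpha$, is the delicate part; the rest is bookkeeping of geometric series.
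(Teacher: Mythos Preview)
Your reductions to $x\in\mathcal E_\alpha$ and the decomposition in $m$ are correct and match the paper, but there is a genuine gap in the geometric core. You conflate two distinct objects: in the forbidden-zones method the tube $\mathcal Z$ (over a cap of $E_{R(x)}$, going inward) is where the \emph{bad points} live, not where the mass of $f$ lies; the mass sits in a separate \emph{ball} $\mathcal B = B(D_{-t}x,\,2^m\sqrt t)$ centred at the lower radius $R(D_{-t}x) < R(x)$. Your claim that ``the tube $Z$ \dots\ must carry a definite fraction of the mass of $f$'' fails in general: near the minimal admissible $t \simeq 2^{-2m}/|x|^2$ the ball's radius $2^m\sqrt t$ can exceed the radial gap $R(x) - R(D_{-t}x) \simeq t|x|^2$, so the ball protrudes past $E_{R(x)}$ and out of $Z$. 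What one actually proves is the comparison $\gamma_{-\infty}(\mathcal Z) \lesssim (2^{Cm}/\alpha')\int_{\mathcal B} h\,d\gamma_{-\infty}$, combining Lemma~\ref{lemma-Peter-forbidden}, the kernel inequality, and the lower bound $|x|^2\,2^{2m} t \gtrsim 1$ coming from~\eqref{forglobal}.

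More seriously, your plan to fix dyadic $t$, run a Vitali argument on caps, and then sum over the $O(\log(1/\alpha))$ scales does not close: the absorption mechanism you propose is incorrect, since the $\beta^{-1/2}$ from Lemma~\ref{lemma-Peter-forbidden} is exactly cancelled by the factor $t^{-1/2}$ at the smallest admissible scale, and the ``slack $\alpha^{-3/4}$ versus $\alpha^{-1}$'' in~\eqref{restrizione-prima-1} concerns only the complement of $\mathcal E_\alpha$, not the level set inside it. The missing idea is a \emph{recursive selection across all $t$ simultaneously}. One first compactifies the $t$-range to $[\delta,1]$ (using the lower bound on $t$), then picks $x^{(1)}$ maximizing $R$ in the level set, reads off a $t_1$ realizing the supremum there, defines the forbidden zone $\mathcal Z^{(1)}$ over a cap of radius $A\,2^{3m}\sqrt{t_1}$ on $E_{R(x^{(1)})}$ together with the ball $\mathcal B^{(1)}$, removes $\mathcal Z^{(1)}$ from the level set, and iterates. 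The tubes then cover the level set by construction; the substantive work is proving that the balls $\mathcal B^{(\ell)}$, which sit at \emph{different} scales $t_\ell$, are pairwise disjoint. This cross-scale disjointness is where~\eqref{lem1}--\eqref{lem2} enter, via a case split on whether $\sqrt{t_\ell}$ and $\sqrt{t_{\ell'}}$ are comparable, and it is precisely why the cap radius carries the enlarged factor $2^{3m}$ rather than $2^m$. Disjointness of the balls gives $\sum_\ell \int_{\mathcal B^{(\ell)}} h\,d\gamma_{-\infty} \le \|h\|_{L^1(\gamma_{-\infty})}$, hence $\gamma_{-\infty}(\text{level set}) \le \sum_\ell \gamma_{-\infty}(\mathcal Z^{(\ell)}) \lesssim 2^{Cm}/\alpha'$, with no sum over $t$ at all.
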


We will
 prove this result in a way that  follows the proof of~\cite[Proposition 8.1]{CCS2}, but several adjustments are necessary to pass from the Ornstein--Uhlenbeck
framework to the inverse setting.
Here  $0<t \le 1$, and
as before, we let the function $f$ be nonnegative and normalized in  $L^1(\gamma_{-\infty})$.
It is enough to consider the level sets $\{ \mathcal H_*^{-,G} f > \alpha\}$ for
 $ \alpha < \alpha_0 $ with some small $\alpha_0 $; see the end of Subsection~\ref{simpleupper}.
                          %  As shown in Subsection~\ref{smalllevels},
Further, we need only consider points $x$ in the annulus $\mathcal E_\alpha$ defined in \eqref{def:annulus}.

What we must prove is that for all $0< \alpha < \alpha_0$
\begin{equation}\label{eq:obiettivo-finale}
\gamma_{-\infty}
\left\{
x\in{\mathcal E_\alpha}\colon
\sup_{0<t\le 1}
 \int%\!
K_t^{UO}
(x,u)\, f(u)\,
d\gamma_{-\infty}(u)
\!
>\alpha \right\}
 \lesssim
\frac{1}{\alpha}.
\end{equation}

The global region $G$ is covered  by the sets $ \mathcal T^m_t, \;\,m\in \N$,  defined in~\eqref{def:Tmt} and~\eqref{def:Tmt0} and now given by
\begin{align}\label{def:Smt}
\mathcal T^{m}_t&=\left\{(x,u)\in G\colon 2^{m-1} \sqrt t<|u-D_{-t}\,x|\le 2^{m} \sqrt t\,\right\},
\end{align}
where the lower bound is to be suppressed for $m=0.$

From~\eqref{litet} we then see that for
$(x,u)\in {\mathcal T^{m}_t}$
\begin{align*} 
K_t^{UO}(x,u)
 &
\lesssim
\frac{
e^{-R(x)}
}{  t^{n/2}
}
\exp
\left({
-c{2^{2 m} }  }\right).
\end{align*}

          %Since $(x,u) \in \mathcal T^{m}_t$  implies $u \in B(D_{-t}\,x, 2^{m} \sqrt t\,)$,  we set
Setting
\begin{align}\label{calKinTm}
{ K}_t^{-,{m}}
(x,u)
&=
\frac{
e^{-R(x)}
}{  t^{n/2}
}
\,\mathbf{1}_{\mathcal T^{m}_t}(x,u),
\end{align}
one has, for all $(x,u)\in G$ and $0<t \le 1$,
\begin{equation} \label{sumK}
K_t^{UO}(x,u)
\lesssim
\sum_{m=0}^\infty \exp
\left({
-c{2^{2 m} }  }\right)
{K}_t^{-,{m}}
(x,u)
\,.
\end{equation}
We define the operator
\begin{align*}
 \mathcal M_{m,\alpha}^{-}\, h(x) =  \mathbf{1}_{\mathcal E_\alpha} (x)\, \sup_{0<t\le 1} \int {K}_t^{-,{m}}
(x,u)\, |h(u)|\,
d\gamma_{-\infty}(u)
\end{align*}
 for $h \in L^1(\gamma_{-\infty})$,
and observe that~\eqref{sumK} implies
\begin{equation}\label{sumKK7}
\mathbf{1}_{\mathcal E_\alpha} (x)\,\sup_{0<t\le 1}
 \int
K_t^{UO}
(x,u)\, f(u)\,
d\gamma_{-\infty}(u)
\le \sum_{m=0}^\infty  \exp
\left({
-c{2^{2 m} }  }\right)\,\mathcal M_{m,\alpha}^{-}\, f(x).
\end{equation}
%%Then we set $ \mathcal M_{\alpha,m}^{-} =  \mathbf{1}_{\mathcal E_\alpha}\, \mathcal M_{m}^{-} $ for all $m$.
\begin{lemma}\label{sum}
 For $0< \alpha < \alpha_0$ with a suitably small $ \alpha_0 >0$  and any $m \in \N$, the operator $\mathcal M_{m,\alpha}^{-}$ maps $L^1(\gamma_{-\infty})$ into  $L^{1,\infty}(\gamma_{-\infty})$, with operator quasinorm at most $C\,2^{Cm}.$
\end{lemma}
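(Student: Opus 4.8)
The plan is to estimate the level set $\{\mathcal M_{m,\alpha}^- f > \lambda\}$ for $x$ in the annulus $\mathcal E_\alpha$ by a combination of a forbidden-zones argument and the tube measure estimate of Lemma~\ref{lemma-Peter-forbidden}. First I would observe that for a fixed $m$, the kernel $K_t^{-,m}(x,u) = t^{-n/2}e^{-R(x)}\mathbf 1_{\mathcal T^m_t}(x,u)$ is supported, in the $u$ variable, on a spherical shell of radius $\simeq 2^m\sqrt t$ around the point $D_{-t}x$; since $(x,u)\in G$ forces $2^m\sqrt t \gtrsim 1/(1+|x|)$, and $x\in\mathcal E_\alpha$ gives $|x|\simeq\sqrt{\log(1/\alpha)}$, one gets a lower bound $\sqrt t \gtrsim 2^{-m}/\sqrt{\log(1/\alpha)}$ that restricts the relevant range of $t$. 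For each such $u$, the mass of $f$ that contributes is controlled by $\|f\|_{L^1(\gamma_{-\infty})} = 1$.

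The core of the argument is a \emph{forbidden-zones} recursion in the spirit of~\cite{Peter} and~\cite[Proposition 8.1]{CCS2}. I would fix the level $\alpha$ and work on the ellipsoid $E_\beta$ with $\beta \simeq \log(1/\alpha)$ via the polar coordinates $x = D_s\tilde x$ from Section~\ref{s:geometry}. The key geometric point: if $\mathcal M_{m,\alpha}^- f(x) > \alpha$ with $x = D_s\tilde x$, then there is a $t\in(0,1]$ for which $\int_{\mathcal T^m_t(x,\cdot)} t^{-n/2} f(u)\,d\gamma_{-\infty}(u) \gtrsim \alpha\, e^{R(x)}\gtrsim \alpha\, e^\beta$; since $f$ has total mass $1$, the ``center'' $D_{-t}x$ must lie near a point where $f$ has substantial mass relative to the local normalization, and the set of such centers, pulled back, is a spherical cap $\Omega$ on $E_\beta$ of controlled area $a^{n-1}$ with $a \simeq 2^m\sqrt t$. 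The forbidden-zone region swept out as $t$ ranges over $(0,1]$ is then contained in a tube $Z$ of the form~\eqref{zona} (possibly a bounded union of such tubes, after dyadic decomposition in $t$), and Lemma~\ref{lemma-Peter-forbidden} gives $\gamma_{-\infty}(Z) \lesssim a^{n-1}\beta^{-1/2}e^\beta$. Summing over the $O(\log\beta)$ dyadic scales of $t$ and over the decomposition of $E_\beta$ into caps whose number is controlled by the $f$-mass allocation, one obtains $\gamma_{-\infty}\{\mathcal M_{m,\alpha}^- f > \alpha\} \lesssim 2^{Cm}/\alpha$, which after the usual scaling (replacing $\alpha$ by $\lambda/\|f\|_1$) yields the weak-type bound with quasinorm $C2^{Cm}$.

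More precisely, I would run the recursion as follows: one shows that if $x$ is in the level set, then $D_{-t}x$ belongs to an ``allowed'' set carrying at least a fixed fraction of the relevant $f$-mass; iterating the geometry of $D_{-t}$ (which by~\eqref{est:2-eBs-v} contracts distances to the origin exponentially) splits $(0,1]$ into finitely many, say $N\simeq\log(1/\alpha)$, dyadic $t$-intervals, and on each the bad set is a tube as above. A Vitali-type covering of the caps $\Omega_k$ on $E_\beta$, combined with $\sum_k a_k^{n-1}$ being controlled by (a power of) the total $f$-mass through the constraint $\int_{\Omega_k}(\cdots)f \gtrsim \alpha e^\beta$, closes the estimate; the polynomial-in-$2^m$ loss comes from the $2^{-m}$ shrinkage of the cap radius forcing more caps and from the crude bound $a\lesssim 2^m$.

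The main obstacle I anticipate is the geometric bookkeeping of the forbidden zones in the non-self-adjoint setting: in~\cite{CCS2} the analogous step relies on careful control of how the dilation group $D_s$ distorts the ellipsoids $E_\beta$ and on the distance estimates~\eqref{lem1}--\eqref{lem2}, and here one must verify these remain valid (they do, being purely Ornstein--Uhlenbeck geometry), but also that the extra factor $e^{-s\,\mathrm{tr}B}$ appearing in the Lebesgue measure formula~\eqref{def:leb-meas-pulita} and in the kernel~\eqref{litet} does not spoil the summability over scales — this is exactly what Lemma~\ref{lemma-Peter-forbidden} is designed to handle, so the delicate part is checking that the shell $\mathcal T^m_t$, rather than a genuine cap, still produces a tube to which that lemma applies. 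A secondary technical point is making the ``$f$-mass on $\Omega_k$'' accounting rigorous: one needs the observation that $\int_{\{u:(x,u)\in\mathcal T^m_t\}} f(u)\,d\gamma_{-\infty}(u)$ is, up to the normalization $e^{-R(x)}t^{-n/2}$, bounded by $1$, together with a pigeonhole/Vitali argument to pass from pointwise information at each bad $x$ to a global covering with controlled overlap.
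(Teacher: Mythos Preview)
Your overall strategy is right --- this is indeed a forbidden-zones argument built on the tube estimate of Lemma~\ref{lemma-Peter-forbidden}, and you correctly identify the lower bound $\sqrt t \gtrsim 2^{-m}/(1+|x|)$ coming from the global region. But the implementation you sketch has a genuine gap, and in one place a notational confusion.

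The confusion first: $\alpha$ is the parameter of the operator (it fixes the annulus $\mathcal E_\alpha$), not the level for the weak-type inequality. You must estimate $\gamma_{-\infty}\{x\in\mathcal E_\alpha:\mathcal M_{m,\alpha}^- h(x)>\alpha'\}$ for \emph{every} $\alpha'>0$ and every normalized $h$; the required bound is $\lesssim 2^{Cm}/\alpha'$. Your ``replace $\alpha$ by $\lambda/\|f\|_1$'' does not make sense here, since $\alpha$ is built into the operator and cannot be scaled away.

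The real gap is in the covering. You propose a dyadic decomposition in $t$ followed by a Vitali-type covering of caps on $E_\beta$, with ``$\sum_k a_k^{n-1}$ controlled by the total $f$-mass''. But that accounting is exactly the hard part, and your sketch gives no mechanism for it. The paper does \emph{not} decompose in $t$. Instead it runs a greedy recursion: pick $x^{(1)}$ maximizing $R$ on the (compact, after the reduction $t\ge\delta$) level set, choose $t_1$ realizing the supremum at $x^{(1)}$, and define simultaneously a tube $\mathcal Z^{(1)}$ (the forbidden zone, of cap-radius $A\,2^{3m}\sqrt{t_1}$ on the ellipsoid through $x^{(1)}$) and a ball $\mathcal B^{(1)}=B(D_{-t_1}x^{(1)},\,2^m\sqrt{t_1})$. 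Remove $\mathcal Z^{(1)}$ and repeat. One then checks three items: the $\mathcal B^{(\ell)}$ are pairwise \emph{disjoint}; the zones $\mathcal Z^{(\ell)}$ cover the level set; and $\gamma_{-\infty}(\mathcal Z^{(\ell)})\lesssim (2^{Cm}/\alpha')\int_{\mathcal B^{(\ell)}} h\,d\gamma_{-\infty}$, the last via Lemma~\ref{lemma-Peter-forbidden} together with~\eqref{stima-t-prop}.

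The disjointness of the balls is the technical heart, and it is not a Vitali statement. It uses the greedy order ($R(x^{(\ell)})\le R(x^{(\ell')})$ for $\ell'<\ell$), the fact that $x^{(\ell)}\notin\mathcal Z^{(\ell')}$ (giving tangential separation $\ge A\,2^{3m}\sqrt{t_{\ell'}}$ after projecting to the ellipsoid through $x^{(\ell')}$), and the polar-coordinate distance estimates~\eqref{lem1}--\eqref{lem2}, with a case split according to whether $\sqrt{t_\ell}\lessgtr\sqrt A\,2^{2m}\sqrt{t_{\ell'}}$. In the second case one needs the radial estimate~\eqref{lem2} combined with $|x^{(\ell)}|\,2^m\sqrt{t_\ell}\gtrsim 1$ to produce enough separation. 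Only once disjointness is established does the sum $\sum_\ell\int_{\mathcal B^{(\ell)}}h\le\|h\|_{L^1(\gamma_{-\infty})}$ close the estimate --- and this is what replaces your unexplained control of $\sum_k a_k^{n-1}$.
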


Given this lemma,~\cite[Lemma 2.3]{steinweiss} will imply that the  $L^{1,\infty}(\mathcal E_\alpha; \gamma_{-\infty})$ quasinorm of the right-hand side of~\eqref{sumKK7} is bounded, uniformly in $ \alpha \in (0,  \alpha_0)$.
Then~\eqref{eq:obiettivo-finale} and Proposition~\ref{stima-tipo-debole-misto} will follow.

\begin{proof}[Proof of Lemma~\ref{sum}]                 %[Proof of Lemma~\ref{sum}]{proof}
Let  $h\ge 0$ be  normalized in  $L^1(\gamma_{-\infty})$.
Fixing   $m\in \N$,  we must show that for any $\alpha' > 0$
\begin{equation}\label{eq:obiettivo'}
\gamma_{-\infty}
\left\{
x\in{\mathcal E_\alpha}:
\sup_{0<t\le 1}
 \int%\!
{K}_t^{-,{m}}
(x,u)\, h(u)\,
d\gamma_{-\infty}(u)
\!
>\alpha' \right\}
 \lesssim
\frac{2^{Cm}}{\alpha'}.
\end{equation}
If
 $(x,u) \in \mathcal T_t^m$ for some $t \in (0,1]$, then
$|u-D_{-t}\, x |
\lesssim 2^{m}\sqrt{t}$, and~\eqref{forglobal}
implies
\begin{align*}
1& \lesssim
2^{m}\sqrt{t}\,(1+|x|)
+t|x|(1+|x|)
\le
2^{m}\sqrt{t}\,(1+|x|) + \left(2^{m}\sqrt{t}\,(1+|x|)\right)^2.
 \end{align*}
 It follows that      % $2^{m}\sqrt{t}\,(1+|x|) \gtrsim 1$, and since  $x\in\mathcal E_\alpha$ implies
\begin{equation}\label{stima-t-}
2^{m}\sqrt{t}\,(1+|x|) \gtrsim 1,
       % (1+|x|)^2 \, 2^{2m}\,t \gtrsim 1
\end{equation}
         %As soon as $x \in \mathcal E_\alpha$ and there exists a $u$ such that $(x,u)\in \mathcal T_t^m$, we deduce from this that \\
and thus $t\gtrsim 2^{-2m} (1+|x|)^{-2}$. If also  $x \in \mathcal E_\alpha$ so that  $|x| \simeq \sqrt{\log(1/\alpha)}$, we conclude that
 $t\ge \delta>0$ for some
$\delta=\delta(\alpha,m)>0$. Hence,~\eqref{eq:obiettivo'} can be replaced by
\begin{equation}\label{eq:obiettivo-finale-new}
\gamma_{-\infty}
\left(\mathcal A_1 (\alpha')
 \right)
 \lesssim
\frac{2^{Cm}}{\alpha'}
,
\end{equation}
where
     \begin{equation*}
  \mathcal A_1 (\alpha')=
\left\{x\in {\mathcal E_\alpha}\colon
\sup_{\delta\le t\le 1}
\int
{ K}_t^{-,{m}}   (x,u)\,h(u)\,
d\gamma_{-\infty}(u)
\ge \alpha'
\right\}.
\end{equation*}
A benefit with this is that  the  supremum   is now a continuous function
of $x\in  {\mathcal E_\alpha}$, and the set $\mathcal A_1 (\alpha')$ is compact.

\medskip

Using the method from~\cite[Proposition 8.1]{CCS2},                         %ideas from~\cite{Peter},
we shall
prove~\eqref{eq:obiettivo-finale-new}
by
building a finite sequence of pairwise disjoint balls
$\big(\mathcal B^{(\ell)}\big)_{\ell=1}^{\ell_0}$ in $\R^n$ and at the same time a
finite sequence of tubes $\big(\mathcal Z^{(\ell)}\big)_{\ell=1}^{\ell_0}$ covering $ \mathcal A_1 (\alpha')$ and  called forbidden zones.

We will then verify the following three items:
\begin{enumerate}
\item
\hskip100pt the $ \mathcal B^{(\ell)}$ are pairwise disjoint;
\item   \begin{equation*}           %\label{final-subset}
 \mathcal A_1 (\alpha') \subset \bigcup_{\ell=1}^{\ell_0}\mathcal Z^{(\ell)};
\end{equation*}
\item
 for each $\ell$
\begin{align*}                     %\label{stima-con-exp}
&\gamma_{-\infty} (\mathcal Z^{(\ell)})
\lesssim
\frac{2^{Cm}}{\alpha'}
 \int_{\mathcal B^{(\ell)}}h(u)
\,d\gamma_{-\infty}(u).
\end{align*}
\end{enumerate}
 This would imply~\eqref{eq:obiettivo-finale-new}
and Lemma \ref{sum}, as follows:
\begin{align*} %%% \label{mixed-bound-ell-*pq}
\gamma_{-\infty}\left(\mathcal A_1 (\alpha') \right) \le
\gamma_{-\infty} \left(\bigcup_{\ell=1}^{\ell_0} \mathcal Z^{(\ell)}  \right)
 \lesssim
\frac{2^{Cm}}{\alpha'}\,
 \sum_{\ell=1}^{\ell_0}
  \int_{\mathcal B^{(\ell)}}h(u)\,d\gamma_{-\infty}(u)
 \lesssim
\frac{2^{Cm}}{\alpha'}.
\end{align*}

 \medskip

To construct the  sets $\mathcal B^{(\ell)}$ and $\mathcal Z^{(\ell)}$, we  define by recursion a sequence of points $x^{(\ell)}$,\; $\ell=1,\ldots, \ell_0$.
                 %, and for $\ell \ge 2$ also sets  $ \mathcal A_\ell (\alpha')$.
Let $x^{(1)}$ be
a maximum point
for the quadratic form $R(x)$   in the compact set  $ \mathcal A_1 (\alpha')$.
Notice that if this set is empty,~\eqref{eq:obiettivo-finale-new}
is immediate. Then by continuity  we
choose
 $t_1 \in [\delta, 1] $ such that
\begin{equation}\label{pha1}
 \int { K}_{t_1}^{-,{m}}   (x^{(1)},u)\,
h(u)\, d\gamma_{-\infty}(u) \ge \alpha'.
\end{equation}
Using this  $t_1$, we  associate with   $x^{(1)}$ the tube
\begin{equation*}
\mathcal Z^{(1)} =
\left\{
D_{ -s}\,\eta
\in \R^n
\colon s\ge 0,\;\;
R{( \eta
)}= R(x^{(1)}),
\;\;
| \eta- x^{(1)}
|<
A\, 2^{3m}\, \sqrt{t_{1}}\right\}.
\end{equation*}
The positive constant $A$ here
will depend  only on
$n$,
$Q$ and $B$ and will be  determined later.

\medskip

Recursively, suppose $x^{(\ell')}$, \hskip2pt $t_{\ell'}$ and $\mathcal Z^{(\ell')}$ have been defined for all $\ell'\leq \ell$, where $\ell \ge 1$. We choose   $x^{(\ell+1)}$
as a maximizing point of $R(x)$ in the set
 \begin{equation}
\label{def:set}
  \mathcal A_{\ell+1} (\alpha') :=
\bigg\{x\in {\mathcal E_\alpha}
  \setminus   \bigcup_{\ell'=1}^{\ell} \mathcal Z^{(\ell')}\colon 
\sup_{\delta\le t\le 1}
\int { K}_t^{-,{m}}   (x,u)\,h(u)\,d\gamma_{-\infty}(u)
\ge \alpha'
\bigg\},
\end{equation}
%%\end{multline}
which is compact as shown later, provided this set is nonempty.  But if  $\mathcal A_{\ell+1} (\alpha')$
is empty,
the process stops with $\ell_0=\ell$ and item (2) follows.              %\eqref{final-subset} follows.
We will  see that this actually occurs for some  finite $\ell$.

Assume now that   $\mathcal A_{\ell+1} (\alpha') \ne \emptyset$.    By continuity, we can select
a
 $t_{\ell+1} \in [\delta, 1] $ such that
\begin{equation}\label{pha}
 \int {K}_{t_{\ell+1}}^{-,{m}}   \left(x^{(\ell+1)},u\right)\,
h(u)\, d\gamma_{-\infty}(u) \ge \alpha'.
\end{equation}

   Then we define the tube
\begin{equation*}
\mathcal Z^{(\ell+1)} =
\left\{
D_{- s}\,\eta
\in \R^n
\colon s\ge 0,\;\;
R{( \eta
)}= R\left(x^{(\ell+1)}\right),
\;\;
\left| \eta- x^{(\ell+1)}
\right|<
A\, 2^{3m}\, {\sqrt{t_{\ell+1}}}\right\}.
\end{equation*}

\medskip

It must be proved that
 $\mathcal A_{\ell+1} (\alpha')$
is closed and thus compact, even though the $\mathcal  Z^{(\ell')}$ are not open.
This will guarantee the existence of
a maximizing point.
We use induction, and observe that $\mathcal A_{1} (\alpha')$ is closed. Assume that
$\mathcal A_{\ell'} (\alpha')$ is closed for
  $1\le  \ell' \le \ell$.  For these $\ell'$,
 the maximizing property of $x^{(\ell')}$ shows that there is no point  $x$ in
$\mathcal A_{\ell'} (\alpha') $ with $R(x) > R(x^{(\ell')})$. Hence,
\begin{equation*}
  \mathcal A_{\ell+1} (\alpha') \subset  \mathcal A_{\ell'} (\alpha')
\subset \left\{x: R(x)\le R(x^{(\ell')})\right\}, \qquad 1\le  \ell' \le \ell,
\end{equation*}
and so
\begin{align}
   \mathcal A_{\ell+1} (\alpha')   
&= \bigcap_{1\le  \ell' \le\ell} \left(\mathcal A_{\ell+1} (\alpha')\,\cap\,         \label{cap}
\left\{x\colon R(x)\le R(x^{(\ell')})\right\}\right) \\
&=\bigcap_{1\le  \ell' \le\ell}
 \left\{x\in {\mathcal E_\alpha}
 \setminus   \mathcal Z^{(\ell')}\colon R(x)\le R(x^{(\ell')}), \;    %\}\;\;
  %\bigcap\{x:
\sup_{\delta\le t\le 1}
\int {K}_t^{-,{m}}   (x,u)\,h(u)\,d\gamma_{-\infty}(u)          \notag
\ge \alpha'\right\}.
\end{align}

For each $\ell' = 1,\dots, \ell$ one has
\begin{multline*}
\{x\in {\mathcal E_\alpha} \setminus   \mathcal Z^{(\ell')}:
R(x)\le R(x^{(\ell')})\} \\
= \left\{D_{-s}\,\eta \in \mathcal E_\alpha \colon s\ge 0, \,R(\eta) = R(x^{(\ell')}), \;|\eta - x^{(\ell')}| \ge A\,2^{3m}\, \sqrt{t_{\ell'}} \right\},
\end{multline*}
and this set is closed. Now~\eqref{cap} shows that
  $  \mathcal A_{\ell+1} (\alpha')$ is closed,
  a maximizing point
$x^{(\ell+1)}$ can be chosen, and  the recursion is well defined.

By applying~\eqref{stima-t-}
to $t_\ell$ and
$x^{(\ell)}$,
one obtains, since  $|x^{(\ell)}|$ is large,
\begin{equation}\label{stima-t-prop}
 |x^{(\ell)}|^2\,
  2^{2m}\, t_\ell  \gtrsim 1.
\end{equation}
Then set
\begin{align*}
\mathcal B^{(\ell)}=&
\left\{
u
\in \R^n
\colon 
|u-
D_{-t_\ell}\, x^{(\ell)}
 \,
 |
 \le 2^{m} \sqrt{t_\ell}\,
\right\},\, \qquad 1\le  \ell' \le \ell.
\end{align*}
Combining~\eqref{calKinTm} and~\eqref{pha},   with  $\ell+1$ replaced by    $\ell$,                                
 we see   that
\begin{align}\label{mixed-bound-ell-1}
\alpha'&\le
\frac
{\exp \left(-{
R(x^{(\ell)})   }\right)
}{  t_\ell^{n/2}}
\int_{\mathcal B^{(\ell)}
}h(u)\,d\gamma_{-\infty}
 (u).
\end{align}
We will verify (1), (2), (3)
and start with (1).
%%The arguments below
%%follow the lines of the proof of Lemma 6.2 in~\cite{CCS1}, with only slight modifications.

The balls
${\mathcal  B}^{(\ell)}$ and ${\mathcal  B}^{(\ell')}$, with $\ell'<\ell$,
will be disjoint if
 \begin{equation}\label{stima-distanza-tang}
\big|
D_{-t_{\ell'}} \,
 x^{(\ell')}
 -D_{-t_{\ell}} \,
 x^{(\ell)}
 \big|
>
 2^m \left(\sqrt {t_\ell}+
 \sqrt{ t_{\ell'}}\,\right).
\end{equation}
By means of  our polar coordinates
with $\beta=R(x^{(\ell')})$,
we write
\begin{equation*}
x^{(\ell)}
=
D_{s}\, \tilde x^{(\ell)}
\end{equation*}
for some $\tilde x^{(\ell)}$ with
$R(\tilde x^{(\ell)})=
R(x^{(\ell')})$ and some $s \in\R$.
Note that  $s\le 0$,
since $R(x^{(\ell)})\le R( x^{(\ell')})$.
The point $x^{(\ell)}$ cannot belong to the forbidden zone $ \mathcal Z^{(\ell')}$,
so
\begin{equation}\label{hypo1}
|
\tilde x^{(\ell)}-
x^{(\ell')}
|\ge A\,
 2^{3m}\, \sqrt {t_{\ell'}}\,.
 \end{equation}

 Since $ t_{\ell'} \le 1$,~\eqref{est:2-eBs-v} implies $R(D_{-t_{\ell'}} \,x^{(\ell')}) > c\, \beta$.
  This allows us to  apply~\eqref{lem1} and the observation following it, to obtain
\begin{equation} \label{distan}
\big|
D_{-t_{\ell'}} \,
 x^{(\ell')}
 -D_{-t_{\ell}} \,
 x^{(\ell)}
 \big| \gtrsim  A \,2^{3m} \,\sqrt {t_{\ell'}}.
\end{equation}
If $ \sqrt A\, 2^{2m} \sqrt {t_{\ell'}} \ge \sqrt {t_{\ell}}\,$\,,
we will  have
\begin{equation}\label{firstcase}
A 2^{3m} \sqrt {t_{\ell'}} \ge \frac12 A 2^{3m} \sqrt {t_{\ell'}} +\frac{1}{2} \sqrt A\, 2^{m} \sqrt {t_{\ell}}\,,
\end{equation}
 and~\eqref{stima-distanza-tang} follows from~\eqref{distan}, provided  $A$ is large enough.

 It remains to make the contrary assumption  $ \sqrt {t_{\ell}} > \sqrt A\, 2^{2m} \sqrt {t_{\ell'}}$,
 which implies in particular that  $t_{\ell} > t_{\ell'}$.
Observe that
\begin{align} \label{dist}
\big|
D_{-t_{\ell'}} \,&
 x^{(\ell')}-
D_{-t_{\ell}} \,
 x^{(\ell)}\big|
=
\big|D_{-t_{\ell}}\big(
D_{t_{\ell} -t_{\ell'}} \, x^{(\ell')}-
 x^{(\ell)}\big)\big|
\simeq
\big|
D_{t_{\ell} -t_{\ell'}} \, x^{(\ell')}-
D_s\,\tilde  x^{(\ell)}\big|.
\end{align}
Both $x^{(\ell)}$ and  $x^{(\ell')}$ are  in $\mathcal E_\alpha$, so they satisfy
\[
R(x^{(\ell)}) \geq \frac34 \,\log \frac{1}{\alpha} \geq \frac{3/4}{5/4}\, R(x^{(\ell')})>\frac{1}{2} \, R(x^{(\ell')})=\frac12\,\beta\,.
\]
Since also  $t_{\ell}- t_{\ell'}\geq 0$, we can apply~\eqref{lem2} to the last expression in~\eqref{dist}
and get
\begin{align}\label{distance}
\big|
D_{-t_{\ell'}} \,
 x^{(\ell')}-
D_{-t_{\ell}} \,
 x^{(\ell)}\big|
 &\gtrsim   |t_{\ell}-t_{\ell'}-s|
\big|x^{(\ell')}\big|\,
\gtrsim  { t_{\ell}}\,
\big|x^{(\ell')}\big|\,,
\end{align}
 the second step because  our assumption implies $t_{\ell}-t_{\ell'} \simeq t_{\ell}$, and $s<0$.
By means again of this  assumption and then~\eqref{stima-t-prop}, we find
\begin{equation*}              %\label{secondcase}
 { t_{\ell}}\, \big|x^{(\ell')}\big|\gtrsim  \sqrt{ t_{\ell}}\,
\big|x^{(\ell')}\big|\, \sqrt{A} \, 2^{2m} \sqrt{ t_{\ell'}}
\gtrsim \sqrt{A} \, 2^{m}\sqrt{ t_{\ell}}
\simeq  \sqrt{A}\, 2^{m}\,\left( \sqrt{ t_{\ell}} +\sqrt{ t_{\ell'}\,}\,\right).
\end{equation*}
With $A$  large enough,~\eqref{stima-distanza-tang} now follows from this and~\eqref{distance}. Item (1) is verified.

Next, we will prove item (2).
For $\ell'<\ell$, we can apply~\eqref{lem1} and then~\eqref{hypo1}, 
to  get
\begin{align*}
\big|
x^{(\ell')}-
 x^{(\ell)}
\big|&
\gtrsim
\big|
x^{(\ell')}-
 \tilde x^{(\ell)}
\big|
\gtrsim
 A\,
 2^{3m} \sqrt {t_{\ell'}}\,.
\end{align*}
Since
$t_{\ell'}\ge \delta$,
 the distances
$\left|
x^{(\ell')}-
x^{(\ell)}
\right|$
are thus bounded below by a positive constant.
This implies that
the sequence $(x^{(\ell)})$
is finite,
%% the cardinality of $\{x^{(\ell)}:\ell\ge 1\}$ is finite,
since all the $
x^{(\ell)}$
are contained in the bounded set
$ {\mathcal E_\alpha}$.
Thus the set  $  \mathcal A_{\ell+1} (\alpha)$ defined in~\eqref{def:set}
will be  empty for some $\ell$, say $\ell = \ell_0$, and the recursion stops.  This implies
 item (2).

\medskip

We are left with the proof of item (3).
The  forbidden zone
$\mathcal Z^{(\ell)}$
is a tube  as  defined in~\eqref{zona}, with
 $a=A\, 2^{3m}
 \sqrt{t_\ell}$ and
 $\beta=R(x^{(\ell)})$.
This value of  $\beta$ will be large since $x^{(\ell)} \in {\mathcal E_\alpha}$ and $\alpha < \alpha_0$ for some small $ \alpha_0$. Thus we can apply
Lemma~\ref{lemma-Peter-forbidden}
to  obtain
\begin{align*}
\gamma_{-\infty} (  \mathcal Z^{(\ell)})\lesssim
\frac{\big(  A 2^{3m} \sqrt{t_\ell}\big)^{n-1}}{
\sqrt{ R(x^{(\ell)})}}\, \exp\left(R\left(x^{(\ell)}\right)   \right)
\notag
. \end{align*}
We bound the exponential here by  means of~\eqref{mixed-bound-ell-1}
and observe that $R(x^{(\ell)}) \simeq |x^{(\ell)}|^2$, getting
\begin{align*}
\gamma_{-\infty} \left(\mathcal Z^{(\ell)}\right)
&\lesssim \frac{1}{\alpha'\, |x^{(\ell)}| \,
{\sqrt{t_\ell}}} \,
 (A 2^{3m})^{n-1} \,
\int_{\mathcal B^{(\ell)}
}\!h(u)\,d\gamma_{-\infty} (u)\,.
\end{align*}
As a consequence of this and
\eqref{stima-t-prop},
we obtain
\begin{align*}
\gamma_{-\infty}
 (\mathcal Z^{(\ell)})
\lesssim
\frac{2^{m}}{\alpha'} \,
 \big(A 2^{3m}\big)^{n-1} \,
\int_{\mathcal B^{(\ell)}}h(u)\, d\gamma_{-\infty}
 (u)\,
\lesssim
\frac{2^{Cm}}{\alpha'}\,
 \, \int_{\mathcal B^{(\ell)}}h(u)\, d\gamma_{-\infty}
 (u)\,,
\end{align*}
  which proves
item    (3).  This completes the proof of  Lemma \ref{sum} and that of Proposition~\ref{stima-tipo-debole-misto}.      
\end{proof}

\section{The global case for large $t$}\label{s:mixed. t large}

This section consists of the proof of the following result.

\begin{proposition}\label{stima-tipo-debole-misto_tlarge}
For all functions $f\in L^1(\gamma_{-\infty})$ with  $\|f\|_{L^1(\gamma_{-\infty})}=1$, 
\begin{equation}\label{eq:proptlarge}
\gamma_{-\infty}
\left\{x\colon  \left|\mathcal H_*^{+,G}
f(x)\right|
>\alpha \right\}
 \lesssim
\frac{1}{\alpha\,\sqrt{\log (1/\alpha)}}\,,\qquad\alpha\in (0,1/2).
\end{equation}
In particular,
the maximal operator
 $ \mathcal H_*^{+,G}$ maps $L^1(\gamma_{-\infty})$ into  $L^{1,\infty}(\gamma_{-\infty})$.
 \end{proposition}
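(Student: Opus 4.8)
\textbf{Plan for the proof of Proposition~\ref{stima-tipo-debole-misto_tlarge}.}

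The plan is to reduce the estimate~\eqref{eq:proptlarge} to a calculation on the annulus $\mathcal E_\alpha$ defined in~\eqref{def:annulus}, using the observations of Subsection~\ref{smalllevels}. Indeed, by~\eqref{restrizione-prima-1} the part of $\R^n$ where $R(x)<\tfrac34\log(1/\alpha)$ already contributes a measure $\lesssim 1/(\alpha\sqrt{\log(1/\alpha)})$, and by~\eqref{5/4} (and the remark following it, which uses~\eqref{stort} for $t>1$) the kernel $K_t^{UO}(x,u)$ is $\lesssim\alpha$ when $R(x)>\tfrac54\log(1/\alpha)$; since $f$ is normalized in $L^1(\gamma_{-\infty})$, on that region $\mathcal H_*^{+,G}f(x)\lesssim\alpha$ and the level set is empty for a suitable absolute constant in front. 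So it suffices to prove
\begin{equation*}
\gamma_{-\infty}\left\{x\in\mathcal E_\alpha\colon \sup_{t>1}\int K_t^{UO}(x,u)\,\mathbf 1_G(x,u)\,f(u)\,d\gamma_{-\infty}(u)>\alpha\right\}\lesssim\frac{1}{\alpha\sqrt{\log(1/\alpha)}}.
\end{equation*}

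Next I would insert the upper bound from~\eqref{stort}: for $t\ge 1$,
\[
K_t^{UO}(x,u)\lesssim e^{-|\tr B|t}\,e^{-R(x)}\exp\!\big(-c\,|D_{-t}x-u|^2\big)\le e^{-R(x)},
\]
so the factor $e^{-|\tr B|t}$ gives uniform integrability in $t$ and lets us dominate the supremum. A natural move is to decompose the global region $G$, for each fixed $t$, into the annuli $\mathcal T^m_t$ of~\eqref{def:Tmt}--\eqref{def:Tmt0} with $1\wedge\sqrt t=1$, on which $\exp(-c|D_{-t}x-u|^2)\lesssim\exp(-c2^{2m})$; summing over $m$ as in Section~\ref{s:The global case} reduces matters to a model maximal operator with kernel $e^{-|\tr B|t}e^{-R(x)}\mathbf 1_{\mathcal T^m_t}(x,u)$, at the cost of a harmless factor $2^{Cm}$. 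For $x\in\mathcal E_\alpha$ and $(x,u)\in G$ with $t\ge 1$, the global condition $|x-u|>1/(1+|x|)$ together with~\eqref{est:2-eBs-v} (which shows $|D_{-t}x|$ is exponentially small in $t$, hence $|D_{-t}x-u|\gtrsim|u|-|D_{-t}x|$ and also $|D_{-t}x-u|\gtrsim$ something controlled) should force $t$ to stay in a bounded range, or else confine $u$ to a thin shell, so the effective region of integration in $u$ has small $\gamma_{-\infty}$-mass.

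The decisive step, and the one I expect to be the main obstacle, is estimating the measure of the level set once $t$ is large. Unlike the small-$t$ case, the contraction $D_{-t}x$ collapses towards the origin as $t\to\infty$, so the maps $x\mapsto D_{-t}x$ are highly non-injective and the "forbidden zones" recursion of Section~\ref{s:The global case} does not transfer directly; one cannot simply build disjoint balls around $D_{-t_\ell}x^{(\ell)}$. Instead I would work in the polar coordinates $x=D_s\tilde x$ of Section~\ref{s:geometry}: writing $x\in\mathcal E_\alpha$ as $D_s\tilde x$ with $\tilde x\in E_\beta$, $\beta\simeq\log(1/\alpha)$, and noting that increasing $t$ corresponds to moving along the $D_{-t}$-flow, I would try to show that for each $x$ the supremum over $t>1$ is essentially attained at a bounded $t$ (using that $e^{-|\tr B|t}e^{-R(x)}\exp(-c|D_{-t}x-u|^2)$ decays once $|D_{-t}x|\ll|u|$), then run an argument in the $\tilde x$ variable — either a Vitali-type covering on the ellipsoid $E_\beta$ combined with the tube measure estimate Lemma~\ref{lemma-Peter-forbidden}, or a direct integration using~\eqref{def:leb-meas-pulita} and~\eqref{lem1}. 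The gain $1/\sqrt{\log(1/\alpha)}$ over the plain weak-$(1,1)$ bound should come precisely from the $\sqrt\beta\simeq\sqrt{\log(1/\alpha)}$ in the denominator of Lemma~\ref{lemma-Peter-forbidden} (equivalently, from the curvature of $E_\beta$), exactly as the analogous sharpening arises in~\cite{CCS2}. Controlling the overlap of the resulting tubes/caps uniformly in the (now large and variable) parameter $t$ is where the new tools promised in the introduction will be needed.
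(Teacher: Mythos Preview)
Your initial reductions are correct and match the paper exactly: restriction to $x\in\mathcal E_\alpha$ via Subsection~\ref{smalllevels}, the kernel bound~\eqref{stort}, and the dyadic decomposition into the annuli $\mathcal T^m_t$ leading to a model operator with kernel $e^{-|\tr B|\,t}\,e^{-R(x)}\,\mathbf 1_{B(D_{-t}x,2^m)}(u)$ and a harmless $2^{Cm}$ loss. Up to that point you are on the paper's track.

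The genuine gap is the ``decisive step''. Your plan is to adapt the forbidden-zones recursion, run a Vitali covering on the ellipsoid $E_\beta$, and extract the $1/\sqrt{\log(1/\alpha)}$ from the $\sqrt\beta$ in Lemma~\ref{lemma-Peter-forbidden}. The paper does \emph{none} of this for $t>1$; Lemma~\ref{lemma-Peter-forbidden} is not invoked at all in Section~\ref{s:mixed. t large}. Two specific expectations in your plan fail: the supremum over $t>1$ is \emph{not} shown to be essentially attained at bounded $t$ (nor is $t$ forced into a bounded range by the global condition---$|D_{-t}x|$ can be arbitrarily small and $u$ can sit near the origin), and there is no covering/disjointness argument replacing the forbidden zones. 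As you yourself note, the contraction $x\mapsto D_{-t}x$ destroys the geometry that made the small-$t$ recursion work, and there is no evident substitute along those lines.

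What the paper actually does is a direct, radial computation with no selection of points $x^{(\ell)}$. After passing to $g(u)=e^{R(u)}h(u)\in L^1(\R^n,du)$ so that the model operator becomes $\mathbf 1_{\mathcal E_\alpha}(x)\sup_{t>1}e^{-Tt}e^{-R(x)}g_{2^m}(D_{-t}x)$ with $g_{2^m}=g*\mathbf 1_{B(0,2^m)}$, one writes $x=D_\varrho\tilde x$ with $\tilde x\in E_1$ and defines $\varrho(\tilde x)$ as the supremum of $\varrho$ for which the level $\alpha'$ is exceeded. A limiting $t(\tilde x)\ge 1$ is extracted. The key new ingredient is a purely radial lemma (Lemma~\ref{intdrho}):
\[
\int_0^{\varrho(\tilde x)}e^{T\varrho}\,e^{R(D_\varrho\tilde x)}\,d\varrho\lesssim e^{T\varrho(\tilde x)}\,e^{R(D_{\varrho(\tilde x)}\tilde x)}\,|D_{\varrho(\tilde x)}\tilde x|^{-2},
\]
which, combined with the defining inequality at $\varrho(\tilde x)$, bounds the $\gamma_{-\infty}$-measure of the level set by $\frac1{\alpha'}\int_{E_1}|D_{\varrho(\tilde x)}\tilde x|^{-2}e^{T(\varrho(\tilde x)-t(\tilde x))}g_{2^m}(D_{\varrho(\tilde x)-t(\tilde x)}\tilde x)\,dS(\tilde x)$. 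One power of $|D_{\varrho(\tilde x)}\tilde x|^{-1}\lesssim 1/\sqrt{\log(1/\alpha)}$ is the source of the logarithmic gain; the other power is spent in a short averaging trick (perturbing by $D_\sigma$ with $|\sigma|\le |D_{\varrho(\tilde x)}\tilde x|^{-1}$ and enlarging $2^m$ to $2^m+C$) that turns the pointwise evaluation $g_{2^m}(D_{\varrho(\tilde x)-t(\tilde x)}\tilde x)$ into an honest integral $\int_\R e^{Ts}g_{2^m+C}(D_s\tilde x)\,ds$. Reversing the polar-coordinate formula~\eqref{def:leb-meas-pulita} then gives $\int_{\R^n}g_{2^m+C}\,dx\lesssim 2^{mn}$, and the proof closes. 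So the $\sqrt{\log(1/\alpha)}$ comes from $|x|\simeq\sqrt{\log(1/\alpha)}$ on $\mathcal E_\alpha$ via Lemma~\ref{intdrho}, not from the tube estimate of Lemma~\ref{lemma-Peter-forbidden}.
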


Observe first that the second statement follows from the first together with the observation at the end of Subsection~\ref{simpleupper}. The same observation allows us to reduce the range of $\alpha$ in the first statement to $\alpha < e^{-2}$, in the proof that follows.

The proof of the first statement runs at first like that of Proposition~\ref{stima-tipo-debole-misto}, although now $t>1$.                 % in Section~\ref{s:The global case}
In particular,   $f \ge 0$ is normalized in  $ L^1(\gamma_{-\infty})$, and from Subsection~\ref{smalllevels} we know that it is enough to consider the values of $\mathcal H_*^{+,G}
f$ in the set $\mathcal E_{\alpha}$ defined in~\eqref{def:annulus}. The annuli $\mathcal T^{m}_t$ are now
\begin{align*}
\mathcal T^{m}_t
&=\left\{(x,u)
\in G\colon 
2^{m-1}\le |u- D_{-t}\,x|< 2^{m}
\,\right\}
, \qquad m \in \N,
\end{align*}
without the lower bound when $m=0.$

We must show that for $0< \alpha < e^{-2}$
\begin{equation}\label{eq:obiettivo-8}
\gamma_{-\infty}
\left\{
x\in{\mathcal E_\alpha}\colon
\sup_{t > 1}
 \int%\!
K_t^{UO}
(x,u)\, f(u)\,
d\gamma_{-\infty}(u)
\!
>\alpha \right\}
 \lesssim
\frac{1}{\alpha\,\sqrt{\log (1/\alpha)}}\,.
\end{equation}

Setting $T = | \text{tr} B| > 0$, we obtain from~\eqref{stort} that for any
 $(x,u)\in{\mathcal T^{m}_t}$
\begin{align*}
K_t^{UO}(x,u) &\lesssim
 e^{-T \,t}\,  e^{-R(x)}
\exp
\left({
-c{2^{2 m} }  }\right)\,.
\end{align*}
Observing that  $(x,u)\in{\mathcal T^{m}_t}$ implies  $u \in B(D_{-t}\,x,\, 2^{m})$, we set
$$
{ K}_t^{+,{m}}(x,u)   = e^{-T \,t}\,  e^{-R(x)}\, \mathbf{1}_{B(D_{-t}\,x,\, 2^{m})}(u)\,,
$$
and conclude that for any
 $(x,u)\in G$ and $t>1$
\begin{equation*}
K_t^{UO}(x,u)
   \lesssim
\sum_{m=0}^\infty  \exp
\left({
-c{2^{2 m} }  }\right) { K}_t^{+,{m}}
(x,u)\,.
\end{equation*}
Almost as in the preceding section, we introduce the operators
\begin{align*}
 \mathcal M_{m, \alpha}^{+}\, h(x) = \mathbf{1}_{\mathcal E_\alpha}(x)\,  \sup_{t > 1} \int {K}_t^{+,{m}}
(x,u)\, |h(u)|\,
d\gamma_{-\infty}(u)\,,
\end{align*}
so that 
\begin{equation}\label{sumKK}  \mathbf{1}_{\mathcal E_\alpha}(x)\,
\sup_{t > 1}
 \int
K_t^{UO}
(x,u)\, f(u)\,
d\gamma_{-\infty}(u)
\lesssim \sum_{m=0}^\infty  \exp
\left({
-c{2^{2 m} }  }\right)\,\mathcal M_{m,\alpha}^{+} f(x).
\end{equation}
%%%Then we set again $\mathcal M_{m,\alpha}^{+} = \mathbf{1}_{\mathcal E_\alpha}\,\mathcal M_{m}^{+}\,$.

\begin{lemma}\label{sum8}
 For $0< \alpha < e^{-2}$ and $m \in \N$, the operator $\mathcal M_{m,\alpha}^{+}$ maps $L^1(\gamma_{-\infty})$ into  $L^{1,\infty}(\gamma_{-\infty})$, with operator quasinorm at most
 $C\,2^{Cm}/{\sqrt{\log (1/\alpha)}\,}.$
\end{lemma}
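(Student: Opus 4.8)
The plan is to estimate, for fixed $m\in\N$ and $\alpha<e^{-2}$, the level set of $\mathcal M_{m,\alpha}^{+}$ at an arbitrary level $\alpha'>0$, i.e.\ to show
\[
\gamma_{-\infty}\Big\{x\in\mathcal E_\alpha\colon \sup_{t>1}\int e^{-Tt}\,e^{-R(x)}\,\mathbf 1_{B(D_{-t}x,\,2^m)}(u)\,h(u)\,d\gamma_{-\infty}(u)>\alpha'\Big\}\lesssim \frac{2^{Cm}}{\alpha'\,\sqrt{\log(1/\alpha)}}
\]
for $h\ge0$ normalized in $L^1(\gamma_{-\infty})$; by \cite[Lemma 2.3]{steinweiss} this summed against $\exp(-c2^{2m})$ gives \eqref{eq:obiettivo-8}. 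The first reduction is to note that on $\mathcal E_\alpha$ one has $e^{-R(x)}\simeq \alpha^{-1}\wedge\dots$, more precisely $e^{-R(x)}\le \alpha^{-3/4}$, so the factor $e^{-R(x)}$ is controlled; and the decisive gain $1/\sqrt{\log(1/\alpha)}\simeq 1/|x|$ must come from the geometry of the tubes $D_{-t}x$, $t>1$, exactly as the factor $1/|x^{(\ell)}|$ appeared in item (3) of the previous section via Lemma~\ref{lemma-Peter-forbidden}.

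First I would set up the forbidden-zones recursion as in the proof of Lemma~\ref{sum}: choose $x^{(1)}$ maximizing $R$ on the (compact, after a cutoff in $t$) level set, pick $t_1>1$ realizing the supremum up to $\alpha'$, attach a ball $\mathcal B^{(\ell)}=B(D_{-t_\ell}x^{(\ell)},2^m)$ and a tube $\mathcal Z^{(\ell)}=\{D_{-s}\eta:s\ge0,\ R(\eta)=R(x^{(\ell)}),\ |\eta-x^{(\ell)}|<A2^{Cm}\}$, then iterate on $\mathcal E_\alpha\setminus\bigcup\mathcal Z^{(\ell')}$. Here, however, there is one genuine new issue compared with the small-$t$ case: since $t>1$ is \emph{unbounded}, the level set need not be compact a priori and the recursion need not terminate for free. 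The remedy is that for $t$ very large, $e^{-Tt}$ is tiny while $\mathbf 1_{B(D_{-t}x,2^m)}$ concentrates near the origin; using $|D_{-t}x|\simeq e^{-ct}|x|$ from \eqref{est:2-eBs-v} and $\int\mathbf 1_{B(D_{-t}x,2^m)}h\,d\gamma_{-\infty}\le \gamma_{-\infty}(B(0,2^m+e^{-ct}|x|))\lesssim e^{C2^{2m}}$ one sees that the contribution from $t\ge T_0(\alpha,m)$ is $\le e^{-TT_0}e^{-R(x)}e^{C2^{2m}}<\alpha'$ once $T_0$ is large enough depending on $\alpha,m,\alpha'$; so effectively $t$ ranges in a compact interval $[1,T_0]$, the level set is compact, and the balls $\mathcal B^{(\ell)}$ being $\gtrsim\delta$-separated (this will follow from a distance estimate as below) forces finitely many steps.

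The heart of the argument is then the three items. For item~(3) I would write, from the defining inequality $\alpha'\le e^{-Tt_\ell}e^{-R(x^{(\ell)})}\int_{\mathcal B^{(\ell)}}h\,d\gamma_{-\infty}$, that $e^{-R(x^{(\ell)})}\ge \alpha' e^{Tt_\ell}/\int_{\mathcal B^{(\ell)}}h$, and apply Lemma~\ref{lemma-Peter-forbidden} with $a=A2^{Cm}$, $\beta=R(x^{(\ell)})\simeq|x^{(\ell)}|^2\simeq\log(1/\alpha)$ (since $x^{(\ell)}\in\mathcal E_\alpha$) to get $\gamma_{-\infty}(\mathcal Z^{(\ell)})\lesssim (A2^{Cm})^{n-1}(\log(1/\alpha))^{-1/2}\,e^{R(x^{(\ell)})}\lesssim \frac{2^{Cm}}{\alpha'\sqrt{\log(1/\alpha)}}\int_{\mathcal B^{(\ell)}}h\,d\gamma_{-\infty}$, where the factor $e^{-Tt_\ell}\le1$ is simply dropped. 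Summing over the disjoint $\mathcal B^{(\ell)}$ (item~(1)) and using that they cover the level set via item~(2) yields the bound. Items~(1) and~(2) rest on showing $|D_{-t_{\ell'}}x^{(\ell')}-D_{-t_\ell}x^{(\ell)}|>2\cdot2^m$ and $|x^{(\ell')}-x^{(\ell)}|\gtrsim A2^{Cm}$ for $\ell'<\ell$: writing $x^{(\ell)}=D_s\tilde x^{(\ell)}$ with $R(\tilde x^{(\ell)})=R(x^{(\ell')})$ and $s\le0$, the non-membership of $x^{(\ell)}$ in $\mathcal Z^{(\ell')}$ gives $|\tilde x^{(\ell)}-x^{(\ell')}|\ge A2^{Cm}$, and then \eqref{lem1} (with the weakened hypothesis $R(D_{-t_{\ell'}}x^{(\ell')})>c\,R(x^{(\ell')})$, valid here since $t_{\ell'}$ may be large and \eqref{est:2-eBs-v} only guarantees a comparison up to $e^{ct_{\ell'}}$ — actually for $t>1$ this makes $R(D_{-t_{\ell'}}x^{(\ell')})$ \emph{smaller}, so one must instead invoke \eqref{lem1} directly with base point $x^{(\ell')}$ and the fact that $D_{-t_{\ell'}}$ is a bijection with $|D_{-t_{\ell'}}v|\simeq e^{-ct_{\ell'}}|v|$, i.e.\ estimate $|D_{-t_{\ell'}}x^{(\ell')}-D_{-t_\ell}x^{(\ell)}|\ge e^{-Ct_{\ell'}}\wedge\dots$) completes the separation, provided $A$ is chosen large relative to the implied constants and relative to the loss $e^{Ct}$ — and this last point, reconciling the exponentially large factors $e^{Ct}$ coming from $D_{-t}$ with a constant $A$ independent of $t$, is the main obstacle I anticipate. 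It is resolved by noting that the relevant $t,t'$ both satisfy $t,t'\le T_0\lesssim \log(1/(\alpha'\,e^{-\#}))$ is \emph{not} bounded — so one cannot absorb $e^{Ct}$ into $A$; instead the correct move is to absorb the $t$-dependence differently, comparing $|D_{-t}x-D_{-t'}x'|$ after applying the common bijection $D_{t\wedge t'}$ and using the group property $D_{-t}D_{t'}=D_{t'-t}$ together with \eqref{lem1}--\eqref{lem2} on $\mathcal E_\alpha$ exactly as in \eqref{dist}--\eqref{distance}, so that the surviving power of $2^m$ in the tube radius $A2^{Cm}$ (with $C$ possibly larger than $3$) does the job uniformly in $t$.
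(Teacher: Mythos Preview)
Your plan has a genuine gap at item~(1), and it is precisely the obstacle you yourself flag and then wave away. For $t_\ell>1$ the centers $D_{-t_\ell}x^{(\ell)}$ contract towards the origin at rate $e^{-ct_\ell}$, while the balls $\mathcal B^{(\ell)}=B(D_{-t_\ell}x^{(\ell)},2^{m})$ have \emph{fixed} radius. Concretely: if $h$ is concentrated near $0$, then for every $x\in\mathcal E_\alpha$ there is a $t>1$ (roughly $t\simeq \log|x|$) with $0\in B(D_{-t}x,2^{m})$, and all the resulting balls overlap, regardless of how far apart the base points $x^{(\ell)}$ are on the annulus. Your proposed fix---apply $D_{t\wedge t'}$ and invoke \eqref{lem1}--\eqref{lem2}---only gives
\[
|D_{-t_{\ell'}}x^{(\ell')}-D_{-t_\ell}x^{(\ell)}|\ \gtrsim\ e^{-C\min(t_\ell,t_{\ell'})}\,|x^{(\ell')}-\tilde x^{(\ell)}|\ \ge\ e^{-C\min(t_\ell,t_{\ell'})}\,A\,2^{Cm},
\]
and the prefactor $e^{-C\min(t_\ell,t_{\ell'})}$ cannot be absorbed into a constant $A$ since $t_\ell$ ranges up to $T_0\simeq\log(1/\alpha')$, which is unbounded in $\alpha'$. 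Making the tube radius $t$-dependent (say $A\,2^{Cm}e^{Ct_\ell}$) would rescue disjointness but blow up item~(3): the factor $e^{(n-1)Ct_\ell}$ is in general not dominated by the available $e^{-|\tr B|\,t_\ell}$.

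The paper therefore \emph{abandons} the forbidden-zones recursion for $t>1$ and argues differently. It rewrites $\mathcal M_{m,\alpha}^{+}h(x)=\mathbf 1_{\mathcal E_\alpha}(x)\sup_{t>1}e^{-Tt}e^{-R(x)}g_{2^{m}}(D_{-t}x)$ with $g=e^{R}h$ normalized in $L^{1}(dx)$ and $g_r=g*\mathbf 1_{B(0,r)}$, passes to polar coordinates $x=D_\varrho\tilde x$ with $\tilde x\in E_1$, and for each $\tilde x$ in the level set selects the \emph{maximal} $\varrho(\tilde x)$ and a limiting $t(\tilde x)$. A one-dimensional lemma (Lemma~\ref{intdrho}) controls $\int_0^{\varrho(\tilde x)}e^{T\varrho}e^{R(D_\varrho\tilde x)}\,d\varrho$ by its endpoint value times $|D_{\varrho(\tilde x)}\tilde x|^{-2}$; this is what produces the factor $1/\sqrt{\log(1/\alpha)}$. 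The key remaining step is an averaging over a short $\sigma$-interval along the curve $s\mapsto D_s\tilde x$, which converts the pointwise value $g_{2^{m}}(D_{\varrho(\tilde x)-t(\tilde x)}\tilde x)$ into $\int_{\R}e^{Ts}g_{2^{m}+C}(D_s\tilde x)\,ds$; integrating this over $\tilde x\in E_1$ and reverting to Lebesgue measure via \eqref{def:leb-meas-pulita} yields $\|g_{2^{m}+C}\|_{L^1(dx)}\lesssim 2^{mn}$. No covering or disjointness is needed.
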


In analogy with Lemma~\ref{sum}, this lemma implies~\eqref{eq:obiettivo-8} and thus also Proposition~\ref{stima-tipo-debole-misto_tlarge}.

\begin{proof}
With $m$ and  $\alpha $ fixed, we will estimate $\mathcal M_{m,\alpha}^{+}\, h$ for a function $h\ge 0$ which is normalized in  $L^1(\gamma_{-\infty})$.  But we prefer to work with the function $g(u) = (2\pi)^{\frac{n}{2}}
(\text{det} \, Q_\infty)^{\frac{1}{2} }\,e^{R(u)} \,h(u)$, which  is normalized in $ L^1(\R^n, du)$, and nonnegative. Then
\begin{align*}
 \mathcal M_{m,\alpha}^{+} \, h(x)
& = \mathbf{1}_{\mathcal E_\alpha}(x)\, \sup_{t>1}\, e^{-Tt}\, e^{-R(x)}
\int_{B(D_{-t}\,x,\, 2^{m})}
h(u)\, d\gamma_{-\infty}(u)\\
&= \mathbf{1}_{\mathcal E_\alpha}(x)\,
\sup_{t>1}\, e^{-Tt} \,e^{-R(x)}
\int_{B(D_{-t}\,x,\, 2^{m})}
g(u)\,du.
\end{align*}

With $r>0$ we write $g_r = g*\mathbf{1}_{B(0,r)}$, which is for each $r>0$ a continuous function in  $L^\infty\cap L^1(\R^n, du)$. Then
\begin{align*}
\mathcal M_{m,\alpha}^{+} \, h(x)
  &= \mathbf{1}_{\mathcal E_\alpha}(x)\,
\sup_{t>1}\,
 e^{-Tt}\, e^{-R(x)}\, g_{2^m}(D_{-t}\,x),
\end{align*}
 and as a supremum of continuous functions, $M_{m,\alpha}^{+} \, h$ is lower semicontinuous when restricted to $\mathcal E_\alpha$.

Let $\alpha' > 0$. Clearly, $\mathcal M_{m,\alpha}^{+} \, h(x) > \alpha'$ if and only if  $x\in{\mathcal E_\alpha}$ and there exists a $t>1$ such that $e^{-R(x)} \,e^{-Tt}\, g_{2^m}(D_{-t}\,x)>\alpha'$.
We use polar coordinates, writing points $x\in \mathcal E_{\alpha}$ as $x = D_\varrho\,\tilde x$, where
$\varrho \in \R$ and $\tilde x$ is on the ellipsoid $ E_1 =\{y:R(y) = 1\}.$
Notice that actually $\varrho > 0$ here, since $\alpha < e^{-2}$ implies that $R(x) > 1$ for   $x\in \mathcal E_{\alpha}$.

Let $ A_{\alpha'}$ be the set of points $\tilde x \in  E_1$ for which there exists a
 $\varrho > 0$ such that  $D_\varrho\, \tilde x \in \mathcal E_\alpha$ and
  $\mathcal M_{m,\alpha}^{+} \, f(D_\varrho \,\tilde x)>\alpha'$.
The lower semicontinuity of  $\mathcal M_{m,\alpha}^{+} \, f$ shows that  $ A_{\alpha'}$ is a relatively open subset of  $E_1$.
For  $\tilde x \in A_{\alpha'}$ we define
 \begin{align*}
\varrho(\tilde x)
\!&= \sup\,\{\varrho>0 \colon \mathcal M_{m,\alpha}^{+} \,h(D_{\varrho}\,\tilde x)>\alpha'\}\\
&= \sup\, \{\varrho > 0\colon \exists\, t>1 \quad \mathrm{with}\quad \mathbf{1}_{\mathcal E_\alpha}(D_\varrho\,\tilde x)\, e^{-Tt}\, e^{-R(D_\varrho\,\tilde x)}\, g_{2^m}(D_{-t}\,D_\varrho\,\tilde x)>\alpha'\}.
\end{align*}
For notational convenience, we do not indicate that $\varrho(\tilde x)$ also depends on $\alpha'.$

We claim  that $\varrho(\tilde x)$ is a lower semicontinuous function on $ A_{\alpha'}\,,$ and thus measurable. Indeed, if
 $\varrho(\tilde x) > \varrho_0$ for some $\tilde x \in A_{\alpha'}$ and some $\varrho_0 >0$, then there exists a
 $\varrho' > \varrho_0$ such that $\mathcal M_{m,\alpha}^{+} \,h(D_{\varrho'}\,\tilde x)>\alpha'$. By the  lower semicontinuity of  $\mathcal M_{m,\alpha}^{+} \, h$, the same inequality holds at  $D_{\varrho'}\,\tilde x'$ for points $\tilde x'  \in A_{\alpha'}$ in a small neighborhood of  $\tilde x$. The claim follows.

For each  $\tilde x \in A_{\alpha'}$
 we can choose a sequence
 $\varrho_{i} \nearrow \varrho(\tilde x)$
  and another sequence  $(t_{i})$ in $(1,\infty)$
 satisfying
\begin{equation}\label{eq:measurability}
 e^{-T{t_{i}}}\, e^{-R(D_{\varrho_{i}}\,\tilde x)}\, g_{2^m}(D_{-t_{i}}\,D_{\varrho_{i}}\,\tilde x)>\alpha'\,.
\end{equation}
This inequality implies that
$e^{R(D_{\varrho_{i}}\,\tilde x)} < \|g\|_{L^\infty}/\alpha'$ and $e^{Tt_{i}} < \|g\|_{L^\infty}/\alpha'$,
from which we conclude that $\varrho(\tilde x) < \infty$ and  that the  $t_{i}$ stay bounded.
  A suitable subsequence of  $(t_{i})$ will then converge, say to $t(\tilde x) \ge 1$.

From~\eqref{eq:measurability} in the limit we get
\begin{align} \label{limit}
e^{-R(D_{\varrho(\tilde x)}\,\tilde x)}\, e^{-T{t(\tilde x)}}\, g_{2^m}(D_{-t(\tilde x)}\,D_{\varrho(\tilde x)}\,\tilde x)\ge \alpha'.
\end{align}
The level set $\{x: \mathcal M_{m,\alpha}^{+}\,h(x) > \alpha'\}$ is contained in the set
\[
\mathcal F_{\alpha'} = \{D_{\varrho }\,\tilde x \colon \; \tilde x \in A_{\alpha'}\,,\;\; 0 < \varrho \le \varrho(\tilde x) \}\,.
\]
To estimate the measure of this set, we switch to polar coordinates with $\beta = 1$ and use~\eqref{def:leb-meas-pulita}; thus
  \begin{equation}\label{ante-lemma}
 \gamma_{-\infty} (\mathcal F_{\alpha'}) = \int_{\mathcal F_{\alpha'}} e^{R(x)}\,dx \lesssim  \int_{A_{\alpha'}} \int_{0}^{\varrho(\tilde x)} e^{T\varrho} \,e^{R(D_{\varrho }\,\tilde x)}\,d\varrho\, dS(\tilde x)\,.
   \end{equation}
Here $dS(\tilde x)$ is the area measure on $E_1$. We now estimate the inner integral above.

\begin{lemma} \label{intdrho}
One has for  $\tilde x \in A_{\alpha'}$
  \begin{align*}
\int_{0}^{\varrho(\tilde x)} e^{T\varrho}\, e^{R(D_{\varrho }\,\tilde x)}\,d\varrho \lesssim
 e^{T\varrho(\tilde x)}\, e^{R(D_{\varrho(\tilde x) }\,\tilde x)}\,|D_{\varrho(\tilde x) }\,\tilde x|^{-2}\,.
  \end{align*}

\end{lemma}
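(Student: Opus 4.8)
The plan is to show that the integrand $e^{T\varrho}\,e^{R(D_\varrho\tilde x)}$ grows at least at a definite exponential rate near the endpoint $\varrho(\tilde x)$, so that the integral is controlled by its value at the endpoint times a factor $|D_{\varrho(\tilde x)}\tilde x|^{-2}$. The key tool is the identity \eqref{vel-4}, which gives $\frac{\partial}{\partial\varrho}R(D_\varrho\tilde x)\simeq |D_\varrho\tilde x|^2$. Writing $\phi(\varrho) = T\varrho + R(D_\varrho\tilde x)$, we have $\phi'(\varrho) = T + \frac{\partial}{\partial\varrho}R(D_\varrho\tilde x) \simeq T + |D_\varrho\tilde x|^2 \gtrsim |D_\varrho\tilde x|^2$, and also $\phi'(\varrho)>0$, so $\phi$ is strictly increasing and the exponent $e^{\phi(\varrho)}$ is maximized at $\varrho=\varrho(\tilde x)$.

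First I would reparametrize the integral by the exponent: since $\phi$ is a strictly increasing smooth function of $\varrho$ on $(0,\varrho(\tilde x)]$, the change of variable $\tau = \phi(\varrho)$ gives
\begin{align*}
\int_{0}^{\varrho(\tilde x)} e^{T\varrho}\,e^{R(D_\varrho\tilde x)}\,d\varrho = \int_{\phi(0)}^{\phi(\varrho(\tilde x))} \frac{e^{\tau}}{\phi'(\varrho(\tau))}\,d\tau \lesssim \int_{-\infty}^{\phi(\varrho(\tilde x))} \frac{e^{\tau}}{|D_{\varrho(\tau)}\tilde x|^2}\,d\tau,
\end{align*}
using $\phi'(\varrho)\gtrsim |D_\varrho\tilde x|^2$. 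Now I would like to replace $|D_{\varrho(\tau)}\tilde x|^2$ in the denominator by the single value $|D_{\varrho(\tilde x)}\tilde x|^2$ at the cost of a constant. This is where \eqref{est:2-eBs-v} enters: for $\varrho\le\varrho(\tilde x)$ one has $|D_\varrho\tilde x| = |D_{-( \varrho(\tilde x)-\varrho)}D_{\varrho(\tilde x)}\tilde x| \gtrsim e^{-C(\varrho(\tilde x)-\varrho)}|D_{\varrho(\tilde x)}\tilde x|$, so that $|D_\varrho\tilde x|^{-2} \lesssim e^{2C(\varrho(\tilde x)-\varrho)}|D_{\varrho(\tilde x)}\tilde x|^{-2}$; equivalently, since the map $\varrho\mapsto R(D_\varrho\tilde x)$ is increasing with derivative $\simeq|D_\varrho\tilde x|^2$ and comparable to $|D_\varrho\tilde x|^2$ itself up to constants, one gets $|D_{\varrho(\tau)}\tilde x|^{-2}\simeq \phi'(\varrho(\tau))^{-1}$ and a geometric decay of the denominator as $\tau$ decreases.

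The cleanest way to package this: going back to the $\varrho$-variable, I would argue directly that $e^{T\varrho}\,e^{R(D_\varrho\tilde x)} = e^{\phi(\varrho)}$ and that $\phi(\varrho(\tilde x)) - \phi(\varrho) \gtrsim |D_{\varrho(\tilde x)}\tilde x|^2\,(\varrho(\tilde x)-\varrho)$ whenever $\varrho(\tilde x)-\varrho\le 1$ — this follows by integrating $\phi'\gtrsim|D_{\varrho'}\tilde x|^2\gtrsim c\,|D_{\varrho(\tilde x)}\tilde x|^2$ on that range via \eqref{est:2-eBs-v} — while for $\varrho(\tilde x)-\varrho\ge 1$ one has $\phi(\varrho(\tilde x))-\phi(\varrho)\gtrsim |D_{\varrho(\tilde x)}\tilde x|^2$ times a constant plus a further linear gain, using again that $|D_{\varrho'}\tilde x|^2$ stays comparable to or larger than $c\,|D_{\varrho(\tilde x)}\tilde x|^2$ for $\varrho'$ in the last unit interval. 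Then
\begin{align*}
\int_{0}^{\varrho(\tilde x)} e^{\phi(\varrho)}\,d\varrho = e^{\phi(\varrho(\tilde x))}\int_0^{\varrho(\tilde x)} e^{-(\phi(\varrho(\tilde x))-\phi(\varrho))}\,d\varrho \lesssim e^{\phi(\varrho(\tilde x))}\int_0^\infty e^{-c\,|D_{\varrho(\tilde x)}\tilde x|^2\,\sigma}\,d\sigma \lesssim \frac{e^{\phi(\varrho(\tilde x))}}{|D_{\varrho(\tilde x)}\tilde x|^2},
\end{align*}
which is exactly the claim, since $e^{\phi(\varrho(\tilde x))} = e^{T\varrho(\tilde x)}e^{R(D_{\varrho(\tilde x)}\tilde x)}$.

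The main obstacle is verifying the lower bound $\phi(\varrho(\tilde x))-\phi(\varrho)\gtrsim |D_{\varrho(\tilde x)}\tilde x|^2(\varrho(\tilde x)-\varrho)$ uniformly, i.e.\ making sure the comparison constants coming from \eqref{est:2-eBs-v} and \eqref{vel-4} do not degenerate — in particular that $|D_{\varrho'}\tilde x|^2$ does not drop far below $|D_{\varrho(\tilde x)}\tilde x|^2$ as $\varrho'$ ranges over the relevant interval. Since $\varrho'\le\varrho(\tilde x)$ and $D_\bullet$ is a one-parameter group, \eqref{est:2-eBs-v} gives precisely $|D_{\varrho'}\tilde x|\gtrsim e^{-C(\varrho(\tilde x)-\varrho')}|D_{\varrho(\tilde x)}\tilde x|$, which is bounded below by $c\,|D_{\varrho(\tilde x)}\tilde x|$ on each unit interval and is in fact $\gtrsim$ that value for all $\varrho'\le\varrho(\tilde x)$ in the reverse direction — so the estimate is robust and the argument closes. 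One should only take mild care that $R(D_\varrho\tilde x)\ge 0$ (true since $R\ge0$) so that dropping the lower part of the range of $\varrho$ near $0$ is harmless.
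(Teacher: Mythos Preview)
Your approach is essentially the same as the paper's: use \eqref{vel-4} to get $\partial_\varrho R(D_\varrho\tilde x)\simeq|D_\varrho\tilde x|^2$, then exploit that near the endpoint $|D_{\varrho'}\tilde x|\simeq|D_{\varrho(\tilde x)}\tilde x|$ (via \eqref{est:2-eBs-v}) so the exponent grows at rate $\gtrsim|D_{\varrho(\tilde x)}\tilde x|^2$, while far from the endpoint one uses a cruder bound. The paper makes the split explicit at $\varrho(\tilde x)-A$ for a large constant $A$; you split implicitly at distance $1$. The paper also drops the factors $e^{T\varrho}$ at the outset, whereas you keep them and use the term $T\sigma$ as the ``further linear gain'' on the far region; both work.

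There is one imprecise step. Your final displayed chain asserts
\[
\int_0^{\varrho(\tilde x)} e^{-(\phi(\varrho(\tilde x))-\phi(\varrho))}\,d\varrho \;\lesssim\; \int_0^\infty e^{-c\,|D_{\varrho(\tilde x)}\tilde x|^2\,\sigma}\,d\sigma,
\]
but this pointwise bound is not what you established for $\sigma=\varrho(\tilde x)-\varrho\ge 1$: there you only have $\phi(\varrho(\tilde x))-\phi(\varrho)\gtrsim |D_{\varrho(\tilde x)}\tilde x|^2 + T\sigma$, not $|D_{\varrho(\tilde x)}\tilde x|^2\sigma$. The fix is immediate and already implicit in your discussion: split the $\sigma$-integral at $1$, so that
\[
\int_0^1 e^{-c|D_{\varrho(\tilde x)}\tilde x|^2\sigma}\,d\sigma \;+\; e^{-c|D_{\varrho(\tilde x)}\tilde x|^2}\int_1^\infty e^{-T\sigma}\,d\sigma \;\lesssim\; |D_{\varrho(\tilde x)}\tilde x|^{-2}.
\]
With that correction the argument is complete and matches the paper's proof.
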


\begin{proof}
  When proving this, we can delete the factors $e^{T\varrho}$ and $e^{T\varrho(\tilde x)}$.

  If $0<\varrho < \varrho(\tilde x) - A$ for some  $A>0$,~\eqref{est:2-eBs-v}
  implies
  \begin{equation*}
    R(D_\varrho \,\tilde x) = R(D_{\varrho - \varrho(\tilde x)}\,D_{\varrho(\tilde x)}\, \tilde x) \lesssim
    e^{-c( \varrho(\tilde x)- \varrho)}\,R(D_{\varrho(\tilde x)}\, \tilde x) \le
    e^{-cA}\,R(D_{\varrho(\tilde x)}\, \tilde x)\,.
  \end{equation*}
  Choosing $A = A(n,Q,B)$ large enough, we conclude that
    \begin{equation*}
    R(D_\varrho \,\tilde x)  \le
    \frac12\,R(D_{\varrho(\tilde x)}\, \tilde x)\,, \qquad \varrho < \varrho(\tilde x) - A\,,
  \end{equation*}
  and thus
   \begin{align*}
\int_{0}^{\varrho(\tilde x) - A}  e^{R(D_{\varrho }\,\tilde x)}\,d\varrho \lesssim &\:
 \exp\left(\frac12\,R(D_{\varrho(\tilde x)}\, \tilde x) \right) \varrho(\tilde x)  \\
=&\, \exp\left(R(D_{\varrho(\tilde x)}\, \tilde x)\right)\,\left[\exp\left(-\frac14\,R(D_{\varrho(\tilde x)}\, \tilde x)\right)\right]^2\, \varrho(\tilde x)\,.
 \end{align*}
 Here one of the factors $\exp\left(-R(D_{\varrho(\tilde x)}\, \tilde x)/4\right)$ takes care of $\varrho(\tilde x)$ in view of~\eqref{est:2-eBs-v}, and the other is no larger than $C |D_{\varrho(\tilde x) }\,\tilde x|^{-2}$.
 Hence, this part of the integral in the lemma satisfies the desired estimate.

For   $\varrho(\tilde x) - A < \varrho < \varrho(\tilde x)$ we use~\eqref{vel-4}, the fact that
  the quantity $|D_s\,\tilde x|$ is increasing in $s$ and finally~\eqref{est:2-eBs-v}, to get
   \begin{align*}
R( D_{\varrho(\tilde x)}\, \tilde x)  -   R(D_\varrho \,\tilde x) 
 & \simeq \, \int_{\varrho}^{  \varrho(\tilde x) } |D_s\,\tilde x|^2\,ds
  \ge (\varrho(\tilde x) - \varrho)\, |D_\varrho\,\tilde x|^2 
   = (\varrho(\tilde x) - \varrho)\, |D_{\varrho - \varrho(\tilde x)}\,D_{\varrho(\tilde x)}\,\tilde x|^2 \\
 & \gtrsim  (\varrho(\tilde x) - \varrho)\, e^{-CA}\,|D_{\varrho(\tilde x)}\,\tilde x|^2
  \simeq (\varrho(\tilde x) - \varrho)\,|D_{\varrho(\tilde x)}\,\tilde x|^2\,.
        \end{align*}
  Thus we can write
   \begin{align*}
\int_{\varrho(\tilde x) - A}^{\varrho(\tilde x)} e^{R(D_{\varrho }\,\tilde x)}\,d\varrho 
            %=&\: e^{R(D_{\varrho(\tilde x) }\,\tilde x)}\,  \int_{\varrho(\tilde x) -A}^{  \varrho(\tilde x) }              e^{-\left(R( D_{\varrho(\tilde x)}\, \tilde x)  -   R(D_\varrho \,\tilde x)\right)}\,d\varrho \\
  \lesssim &\: e^{R(D_{\varrho(\tilde x) }\,\tilde x)}\, \int_{\varrho(\tilde x) -A}^{  \varrho(\tilde x) }
  e^{-c(\varrho(\tilde x) - \varrho)\,|D_{\varrho(\tilde x) }\,\tilde x|^{2}\,}\,d\varrho \\
  = &\: e^{R(D_{\varrho(\tilde x) }\,\tilde x)}\, \int_{0}^{A} e^{-c\sigma\,|D_{\varrho(\tilde x) }\,\tilde x|^{2}\,}\,d\sigma  \lesssim
   e^{R(D_{\varrho(\tilde x) }\,\tilde x)}\,
   |D_{\varrho(\tilde x) }\,\tilde x|^{-2}\,.
  \end{align*}
 The proof of Lemma~\ref{intdrho} is complete.
\end{proof}

To continue the proof of Lemma~\ref{sum8}, we now insert in~\eqref{ante-lemma}  the expression from Lemma~\ref{intdrho}, and obtain
  \begin{equation}\label{ineq_h_E}
\gamma_{-\infty} (\mathcal F_{\alpha'}) \lesssim \int_{A_{\alpha'}}
 e^{T\varrho(\tilde x)}\, e^{R(D_{\varrho(\tilde x) }\,\tilde x)}\,|D_{\varrho(\tilde x) }\,\tilde x|^{-2} \, dS(\tilde x)\,.
   \end{equation}
Then we apply the upper estimate of $e^{R(D_{\varrho(\tilde x)}\,\tilde x )}$ that follows from~\eqref{limit}, to conclude that
\begin{equation}\label{extra}
e^{T\varrho(\tilde x)}\, e^{R(D_{\varrho(\tilde x) }\,\tilde x)}\,|D_{\varrho(\tilde x) }\,\tilde x|^{-2}  \lesssim \frac1{\alpha'} \, \psi(\tilde x)\,
\end{equation}
where
\[\psi(\tilde x)=
  e^{T\varrho(\tilde x)}\,   e^{-Tt(\tilde x)}\,      |D_{\varrho(\tilde x) }\,\tilde x|^{-2}\,
  g_{2^m}(D_{-t(\tilde x)}D_{\varrho(\tilde x)}\,\tilde x)\,.\]

Let $|\sigma| \le 1\wedge |D_{-t (\tilde x)}\,D_{\varrho (\tilde x)}\,\tilde x|^{-1}$. Then~\eqref{x-Dtx} leads to
\[
 |D_\sigma\, D_{-t (\tilde x)}\, D_{\varrho (\tilde x)}\,\tilde x - D_{-t (\tilde x)}\,D_{\varrho (\tilde x)}\,\tilde x| \lesssim
 |\sigma |\, | D_{-t (\tilde x)}\, D_{\varrho (\tilde x)}\,\tilde x| \leq 1,
\]
and the inclusion $B(D_{-t (\tilde x)}\, D_{\varrho (\tilde x)}\,\tilde x,2^m) \subset B(D_{\sigma}\,D_{-t(\tilde x)}\,D_{\varrho(\tilde x)}\,\tilde x,2^m+C)$ yields
\begin{equation*}           %\label{ineq:due_m}
g_{2^m}(D_{-t(\tilde x)}\,D_{\varrho(\tilde x)}\,\tilde x) \le g_{2^{m}+C}(D_{\sigma}\,D_{-t(\tilde x)}\,D_{\varrho(\tilde x)}\,\tilde x).
 \end{equation*}

Thus we can write for $\tilde x \in A_{\alpha'}$
 \begin{equation}
\psi(\tilde x)\le
 |D_{\varrho(\tilde x) }\,\tilde x|^{-2}\,e^{-T \sigma} \,e^{T (\sigma-t(\tilde x) +\varrho(\tilde x))}\,
   g_{2^{m}+C}(D_{\sigma -t(\tilde x)+\varrho(\tilde x)}\,\tilde x).
   \end{equation}
   Here we replace $e^{-T \sigma}$ by $C$ and then take
   the mean of both sides with respect to $\sigma$ in the indicated interval,
to get
    \begin{align*}
\psi(\tilde x)
& \lesssim 1\vee |D_{-t(\tilde x)}\,D_{\varrho(\tilde x)}\,\tilde x| \\&
\qquad \quad \times \int_{|\sigma| \le 1\wedge |D_{-t(\tilde x)}D_{\varrho(\tilde x)}\,\tilde x|^{-1}}
\, |D_{\varrho(\tilde x) }\,\tilde x|^{-2} \,e^{T (\sigma-t(\tilde x) +\varrho(\tilde x))}\, g_{2^{m}+C}(D_{\sigma -t(\tilde x)+\varrho(\tilde x)}\,\tilde x) \, d\sigma\\
& \lesssim          |D_{\varrho(\tilde x)}\,\tilde x|^{-1} \,
\int_{|\sigma| \le 1\wedge\, |D_{\varrho(\tilde x) }\,\tilde x|^{-1} }
e^{T (\sigma-t(\tilde x) +\varrho(\tilde x))}\,   g_{2^{m}+C}(D_{\sigma -t(\tilde x)+\varrho(\tilde x)}\,\tilde x) \, d\sigma,
   \end{align*}
    where we used the fact that  $\varrho(\tilde x) > 0$ and $\tilde x\in\mathcal E_1$ so that
    $1\vee |D_{-t(\tilde x)}\,D_{\varrho(\tilde x)}\,\tilde x| \le |D_{\varrho(\tilde x) }\,\tilde x|$.
Since $D_{\varrho(\tilde x)}\,\tilde x \in \mathcal{E_\alpha}$, we  have $|D_{\varrho(\tilde x)}\,\tilde x|^{-1} \lesssim 1/\sqrt{\log(1/\alpha)}$.
By replacing $\sigma$ by $s = \sigma-t(\tilde x) +\varrho(\tilde x)$ and extending the integral to all of $\R$, we  obtain
  \begin{align}
\psi(\tilde x ) \lesssim
\frac1{\sqrt{\log(1/\alpha)}} \, \int_{\R}
 e^{T s}\,
   g_{2^{m}+C}(D_{s}\,\tilde x)\, ds.
   \end{align}
  Inserting this  in~\eqref{ineq_h_E} combined with~\eqref{extra}, we find that
   \begin{align}
\gamma_{-\infty} (\mathcal F_{\alpha'})  \lesssim  \frac1{\alpha'} \,\frac1{\sqrt{\log(1/\alpha)}} \,\int_{A_\alpha'}
    \, \int_{\R}     e^{T s}\,   g_{2^{m}+C}(D_{s}\,\tilde x)\, ds\, dS(\tilde x).
   \end{align}
Now we use~\eqref{def:leb-meas-pulita}
  to go back to  Lebesgue measure $dx$ with $x =D_s\,\tilde x$.
Since  $|\tilde x| \simeq 1$,
this yields
  \begin{align*}
 \gamma_{-\infty} (\mathcal F_{\alpha'})
 &\lesssim \frac{1}{\alpha'} \,\frac1{\sqrt{ \log(1/\alpha)}}
 \int_{\{x = D_s\,\tilde x:\:s \in \R,\: \tilde x \in A_{\alpha'}\} }
   g_{2^{m}+C}   (x)\, dx\\
 &  \le \frac{1}{\alpha'} \,\frac1{  \sqrt{ \log(1/\alpha)}} \int_{\R^n} g_{2^{m}+C}(x)\, dx \\
 & \lesssim \frac{1}{\alpha'} \,\frac1{  \sqrt{\log(1/\alpha)}} \, \left(2^{m}+C\right)^n \, \|g\|_{L^1(\R^n)} \simeq \frac{1}{\alpha'} \frac{2^{mn}}{\sqrt{\log(1/\alpha)}}.
   \end{align*}
   This ends the proof of Lemma~\ref{sum8} and that of Proposition~\ref{stima-tipo-debole-misto_tlarge}.
\end{proof}

\section{Completion of proofs}\label{completion}
Theorem~\ref{weaktype1} is an immediate consequence of
Propositions~\ref{prop-locale},~\ref{stima-tipo-debole-misto} and~\ref{stima-tipo-debole-misto_tlarge}.
Further,    Propositions~\ref{prop-locale} and~\ref{stima-tipo-debole-misto_tlarge} together imply the positive part of
Theorem~\ref{c:sharp}. It remains for us to prove the sharpness assertion in Theorem~\ref{c:sharp}.

To this end, we take a  point $z$ with  $R(z)$ large.  Let $B_1$ denote the ball $ B(z,1)$ and set 
 $B_2 =  B(D_{-2 }\,z,1)$.  If $x \in B_1$ and $u \in B_2$, we will have 
 $$
 |D_{-2 }\, x - u| \le |D_{-2 }\,x - D_{-2 }\, z| + |D_{-2 }\, z - u| = |D_{-2 }\,( x - z)| + |D_{-2 }\, z - u| \lesssim 2,
 $$
  so that \eqref{stort} yields $ K^{UO}_2(x,u) \simeq e^{-R(x)}$.
With $f = \mathbf 1_{B_2}/\gamma_{-\infty}(B_2)$ it follows that $\mathcal H^{UO}_2 f(x) \simeq  e^{-R(x)} $ for $x \in B_1$.
 
Then  $\mathcal H^{UO}_2 f(x) \gtrsim  e^{-R(z)}$ if $x$ is in the set 
$B^* = \{x \in B_1: R(x) < R(z) \}$.
For any small $\alpha >0$ we can  choose $z$ satisfying  $R(z) = \log ( 1/ \alpha) -A$ for a suitable, large constant $A = A(n,Q,B)$, and conclude  that
$\mathcal H^{UO}_2 f(x) > \alpha$ for $x \in B^*$.  
With our polar coordinates, one can verify that
\begin{equation*}
  \gamma_{-\infty}(B^*) \gtrsim \frac{e^{R(z)}}{\sqrt{R(z)}}  \simeq \frac{1}{\alpha\,\sqrt{\log( 1/\alpha)} }
\end{equation*}
if  $\alpha $ is small enough.
This means that \eqref{enhanced} implies \eqref{PHI}.

 Theorem~\ref{c:sharp} is completely proved.

\vskip22pt

\begin{comment}
 
Commenti di Peter

Ho letto con cura e cambiato tantissime cose piccole e parecchie cose più o meno importanti. Ho messo in rosso solo quelle più importanti. Tutto in giallo è da cancellare.  Vi prego di leggere tutto, e si può discutere tutto.
Nell'ultima formula, vi sembra che occorre spiegare la misura di $B^*$ ?
{\Blue{Qui abbiamo dei dubbi. Nel caso UO, non riusciamo a dimostrare l'equivalenza tranne che nel caso $n=1$.}}

\vskip4pt

Peter:  Qui si potrebbe prima notare che  basta la stima da sotto della misura. Poi si osserva che $B^*$ contiene il tubo troncato $\{ D_{-s}\,x:\: 0<s<c/|z|^2,\: R(x)= R(z),\: |x-z|<c\}$, per una $c$ piccola. E per stimare la misura del tubo troncato si usa le coordinate polari.

Riscriviamo o lasciamo stare?

 {\Green{Lascio a voi la scelta. Ho anche scritto un file con il conto, allegato.}}

\vskip14pt

{\Blue{Commenti di Paolo e Valentina

\vskip7pt

\vskip7pt
3) Qui e là abbiamo fatto qualche minima variazione (in blu). {\Green{Vanno tutte benissimo, grazie, solo su una ho una domanda: nell'enunciato della Proposizione~\ref{p:nucleoKt}, perch\'e $d\gamma_{-\infty}(u)$ invece che $d\gamma_{-\infty}$?}}
{\Blue{Era solo un modo per sottolineare che si integra rispetto a u, ma abbiamo tolto.}}

\vskip17pt
\end{comment}

\end{document}